\documentclass[mathpazo]{cicp}
\usepackage{graphicx}
\usepackage{bm}
\usepackage{color}
\usepackage{booktabs}
\usepackage{multirow}
\DeclareMathOperator{\diag}{diag}

\definecolor{lzcol}{rgb}{1, 0, 0}

\begin{document}
\title{Fast Algorithm For Solving Time-dependent Multiscale radiative transport Equation}


\author{Qinchen Song\affil{1}, Lei Zhang\affil{1}\corrauth, Min Tang\affil{1}\corrauth}
\address{\affilnum{1}\ Department of Mathematics, Institute of Natural Sciences,
Shanghai Jiao Tong University, Minhang, Shanghai 200240, P.R. China}
\email{{\tt sqc9931@sjtu.edu.cn}, {\tt lzhang2012@sjtu.edu.cn}, {\tt tangmin@sjtu.edu.cn}}


\begin{abstract}
When solving the time-dependent radiative transport equation (RTE), implicit time discretization is often employed for its robustness and stability. This results in a sequence of steady-state RTEs with identical cross-sections but varying source terms, whose repeated solution is computationally costly. To address this, we first apply the adaptive tailored finite point scheme (TFPS) for spatial discretization. This scheme exploits prior knowledge of the background media's optical properties to adaptively compress the angular domain, constructing a compressed linear system. A key feature is its ability to reconstruct the layer structure after compression, faithfully capturing the variance at the layer. We then use the Recursive Skeleton Method (RSM) to obtain an explicit multilevel decomposition of the inverse discrete operator, which is reused for all steady-state solutions. Numerical experiments show that our framework achieves high accuracy and significant efficiency across diverse scenarios.
\end{abstract}

\ams{35Q70, 65M55, 65M50, 65M06}
\keywords{time-dependent radiative transport equation (RTE); discrete ordinates method (DOM); heterogeneous media; adaptive tailored finite point scheme (ATFPS); recursive skeleton method (RSM); multilevel method.}

\maketitle

\section{Introduction}
\label{sec:intro}
The radiative transport equation (RTE) describes the propagation and interaction of particles, such as photons and neutrons, with background media. It finds numerous applications in nuclear engineering, atmospheric science, astrophysics, remote sensing, optical tomography, thermal engineering, etc. The time-dependent RTE reads as follows:
\begin{equation}\label{eq:sec1:1}
    \frac{\partial{\psi}(\mathbf{z},\mathbf{u},t)}{\partial t}+\frac{\mathbf{u}}{\epsilon}\cdot\nabla \psi(\mathbf{z},\mathbf{u},t)+\frac{\sigma_{T}}{\epsilon^2}(\mathbf{z})\psi(\mathbf{z},\mathbf{u},t)=(\frac{\sigma_{T}}{\epsilon^{2}}-\sigma_{a})(\mathbf{z})\int_{S}\kappa(\mathbf{u},\mathbf{u}')\psi(\mathbf{z},\mathbf{u},t)d\mathbf{u}'+q(\mathbf{z},t), 
\end{equation}
where $\mathbf{z}\in \Omega \subset\mathbb{R}^{3}$, $\mathbf{u}\in\mathbb{S}^{2}=\{\mathbf{u}\in\mathbb{R}^{3}|\Vert\mathbf{u}\Vert=1\}$ and $t\in [0,T]$ represent respectively the location, the moving direction of the particles and the current time, with $\Vert\cdot\Vert$ being the Euclidean norm. The angular flux $\psi(\mathbf{z},\mathbf{u},t)$ represents the density of particles moving along the direction $\mathbf{u}$ at location $\mathbf{z}$, time $t$. The dimensionless parameter $\epsilon \in (0,1]$ denotes the ratio of the particle mean free path to the characteristic length of the computational domain, serving as a scaling factor. Following the diffusive scaling framework in \cite{han2014two}, the total cross section, scattering cross section, absorption cross section, and external source are rescaled as $\frac{\sigma_T}{\epsilon}$, $\frac{\sigma_T}{\epsilon} - \epsilon\sigma_a$, $\epsilon\sigma_a$, and $\epsilon q$, respectively. Consistent with \cite{han2014two}, we assume that $\sigma_T$, $\sigma_a$, and $q$ are non-negative, spatially dependent, and piecewise smooth and bounded. Moreover, their spatial gradients $\nabla\sigma_T$, $\nabla\sigma_a$, and $\nabla q$ are uniformly bounded in the $L^\infty$ norm ,except at the interfaces. This regularity condition ensures that the spatial discretization can adequately resolve the spatial variations of the material parameters. Here 
The kernel function $\kappa(\mathbf{u},\mathbf{u}')$ provides the transitional probability for particles moving in direction $\mathbf{u}'$ to be scattered into direction $\mathbf{u}$. One typical example is the Henyey-Greenstein (HG) function \cite{henyey1941diffuse}, which depends solely on the inner product $\mathbf{u}\cdot\mathbf{u}'$ of directions $\mathbf{u}$ and $\mathbf{u}'$ and is defined as follows:
\begin{equation}
\kappa(\mathbf{u},\mathbf{u}')=\frac{1-g^{2}}{(1+g^{2}-2g\mathbf{u}\cdot\mathbf{u}')^{3/2}}.
\end{equation}
Here the parameter $g\in[-1,1]$ is the anisotropy factor and is used to characterize the angular distribution of scattering. The initial condition for equation \eqref{eq:sec1:1} is prescribed as:
\begin{equation}\label{eq:sec1:2}
    \psi(\mathbf{z},\mathbf{u},0) = \Psi^{0}(\mathbf{z},\mathbf{u}).
\end{equation}
For the boundary conditions, we impose Dirichlet conditions specifying the inflow angular flux:
\begin{equation}\label{eq:sec1:3}
\begin{aligned}
    \psi(\mathbf{z},\mathbf{u},t)=\Psi(\mathbf{z},\mathbf{u},t),\quad (\mathbf{z},\mathbf{u})\in\Gamma^{-}=\{\mathbf{z}\in\Gamma=\partial\Omega,  \mathbf{u}\in\mathbb{S}^{2}| \mathbf{u}\cdot\mathbf{n}_{\mathbf{z}}<0 \},  
    \end{aligned}
\end{equation}
where $\mathbf{n}_{\mathbf{z}}$ denotes the outward unit normal vector to $\Omega$ at the point $\mathbf{z}$, and $\Gamma^{-}$ is the inflow boundary. Besides, the solution $\psi$ must remain continuous across material interfaces, i.e. for any interface $\alpha$, 
\begin{equation}\label{eq:sec1:4}
    [\psi(\mathbf{z},\mathbf{u},t)]|_{\alpha}=0,
\end{equation}
with $[\cdot]|_{\alpha}$ representing the jump in the function value across interface $\alpha$.

The numerical solution of the RTE is notoriously challenging due to its high-dimensional phase space, which yields prohibitively large discretized systems. This difficulty is compounded by the inherent multiscale nature of the problem, governed by the scaling parameter $\epsilon$. In heterogeneous media, $\epsilon$ spans orders of magnitude from $\mathcal{O}(10^{-5})$ (diffusive regime) to $\mathcal{O}(1)$ (transport regime). This multiscale behavior imposes stringent requirements on spatial discretization: to ensure solution accuracy and stability, extremely fine meshes are often required \cite{lewis1984computational,reed1971new,larsen1987asymptotic,larsen1989asymptotic,adams2001discontinuous}, further exacerbating the size of the resulting linear systems. For time-dependent problems, as $\epsilon \to 0$, the particle interaction terms  dominate the dynamics, introducing a fast time scale $O(\epsilon)$ that coexists with the slow macroscopic evolution $O(1)$. This wide separation of time scales renders the system numerically stiff, restricting explicit schemes to impractically small steps $\Delta t \lesssim \mathcal{O}(\epsilon)$ for stability \cite{jin1999efficient}. To circumvent this stability constraint, fully implicit temporal discretizations are standard. Consequently, each time step requires the iterative solution of a large linear system. The cumulative effect renders the direct numerical simulation of time-dependent, multiscale RTE problems computationally prohibitive, thereby motivating the development of efficient, scale-robust numerical algorithms.

In \cite{AdaptiveTFPS}, we developed the adaptive tailored finite point scheme (ATFPS) for the steady-state RTE. By exploiting the low-rank structure of the angular dependence, ATFPS efficiently compresses the angular degrees of freedom while preserving accuracy through explicit reconstruction of boundary and interface layers, thereby overcoming a key limitation of conventional methods. This approach significantly reduces the computational cost for steady-state problems and naturally extends to the time-dependent setting. Specifically, we define the steady-state transport operator as
\[
 \mathcal{L}_{\epsilon}\psi = \frac{\mathbf{u}}{\epsilon}\cdot\nabla \psi + \frac{\sigma_{T}}{\epsilon^2}\psi - \left(\frac{\sigma_{T}}{\epsilon^{2}} - \sigma_{a}\right)\int_{S}\kappa(\mathbf{u},\mathbf{u}')\psi \, d\mathbf{u}'.
\]
The associated solution operator $\mathcal{L}_{\epsilon}^{-1}$ maps a source term $f$ and an incoming boundary condition $\Psi$ to the unique solution $\psi$ of
\begin{equation}\label{eq:sec1:5}     
\mathcal{L}_{\epsilon}\psi = f, \quad \psi|_{\Gamma^{-}} = \Psi.
\end{equation}
Combined with the discrete ordinate method (DOM), ATFPS yields a low-rank approximation $\mathcal{L}_{\epsilon, h, \delta}^{-1}$ of $\mathcal{L}_{\epsilon}^{-1}$, where $h$ is the mesh size and $\delta$ is the basis truncation threshold. This approximation achieves first-order accuracy in $\delta$ and second-order accuracy in $h$, uniformly across boundary layers. Moreover, the compression ratio is governed by $\delta$ and the local rank structure determined by the optical properties $(\sigma_T, \sigma_a, \epsilon)$. For instance, in the diffusion regime ($\epsilon \ll 1$), the operator is highly compressible, whereas compressibility diminishes in the transport regime ($\epsilon = \mathcal{O}(1)$).

By employing a fully implicit temporal discretization, the angular compression framework developed for the steady-state RTE extends naturally to the time-dependent setting. After time discretization, the compressed solution operator $\mathcal{L}_{\epsilon, h, \delta}^{-1}$ must be applied repeatedly during the iterative linear solutions at each time step. To avoid the prohibitive cost of repeatedly solving the boundary-value problem \eqref{eq:sec1:5}, an explicit representation of $\mathcal{L}_{\epsilon, h, \delta}^{-1}$ is required—one that can be applied directly to the right-hand side vector at each iteration.
To construct this operator efficiently, we employ a multilevel framework based on the recursive skeleton method (RSM) \cite{ho2012fast,minden2017recursive}. While RSM has proven effective for fast solvers of steady-state RTE integral formulations \cite{fan2019fast, ren2019fast}, this work extends the methodology to adaptively compressed time-dependent systems. The construction hinges on the locality (or approximate locality) of the basis functions, a property inherently satisfied by our discretization. Exploiting this structure, we iteratively decompose the fine-grid solution space into a hierarchy of nested subspaces. This decomposition induces a corresponding multilevel factorization of the solution operator, expressed via projection and restriction operators between these nested function spaces. Notably, this construction provides a novel reinterpretation of RSM within the framework of nested solution spaces. Using this framework, the explicit representation of the solution operator can be constructed with $\mathcal{O}(I^3)$ operations, and applied to a vector at a cost of $\mathcal{O}(I^2 \log I)$ for an $I \times I$ spatial grid.

The inherent low-rank structure of the angular dependence in the RTE has motivated numerous model reduction strategies. Representative approaches include the reduced basis method (RBM) \cite{peng2022reduced}, proper orthogonal decomposition (POD) \cite{buchan2015pod, hughes2020discontinuous, hughes2022adaptive}, dynamical low-rank approximation (DLRA) \cite{einkemmer2021asymptotic, peng2021high, peng2020low, peng2023sweep}, and randomized SVD \cite{chen2020random, chen2021low}. While effective in smooth regimes, these techniques typically rely on \textit{global} basis functions constructed from solution snapshots, which struggle to resolve localized boundary and interface layers without excessive global ranks. Consequently, accuracy degrades sharply in multiscale or heterogeneous settings. 
Besides, fast hierarchical techniques such as recursive skeleton method \cite{ho2012fast, minden2017recursive} have accelerated the computation for steady-state integral formulations \cite{fan2019fast, ren2019fast}, but extending them to time-dependent settings with angular compression remains an open challenge. 
In this work, we bridge these gaps by applying ATFPS and integrating it into RSM. Our approach has twofold novelty:

\begin{itemize}
    \item \textbf{Adaptive angular compression with layer preservation.} We integrate ATFPS for spatial discretization, which exploits local low-rank properties to enable adaptive compression in the angular domain while fully preserving boundary and interface layer structures.
    \item \textbf{Multilevel decomposition for repeated use.} We employ RSM to construct a multilevel decomposition of the compressed solution operator, which can be applied consistently across different time steps. Moreover, this decomposition is invariant to changes in (1) time step sizes, (2) source terms, and (3) initial/boundary conditions.
\end{itemize}
\vspace{-1cm}
\paragraph{Outline}
The paper is organized as follows: Section \ref{sec:discretization_scheme} introduces the discretization framework for the time-dependent RTE, which combines the midpoint rule for temporal discretization, the discrete ordinate method (DOM) for angular discretization, and the ATFPS for spatial discretization. This process yields a sequence of compressed linear systems that share a common discrete operator. Section \ref{sec:RS} presents the recursive skeleton method, which constructs a multilevel decomposition of the compressed inverse operator. Section \ref{sec:experiments} validates the accuracy and computational efficiency of the proposed scheme through a series of numerical experiments. Finally, Section \ref{sec:conclusion} summarizes our main contributions and discusses potential directions for future research.

\section{Discretization scheme}
\label{sec:discretization_scheme}
For clarity, we restrict the original formulation \eqref{eq:sec1:1} to the time-dependent RTE in x–y geometry as follows:
\begin{equation}
\partial_{t}\psi(x, y, c, s, t)+\mathcal{L}_{\epsilon}\psi = q(x, y, t),\quad (x,y)\in\Omega\subset \mathbb{R}^{2},\ (c,s)\in\mathbb{D}=\{(x,y)\in\mathbb{R}^{2}|x^{2}+y^{2}\leq 1\}.
\end{equation}
where $c$ and $s$ are the projections of the angular direction vector  $\mathbf{u}$ in \eqref{eq:sec1:1} onto the x and y axes. The operator $\mathcal{L}_{\epsilon}$ is the natural restriction of the one in \eqref{eq:sec1:1} to the x-y plane, with the form:
\begin{equation}\label{eq:sec2:40}
\mathcal{L}_{\epsilon}\psi=\frac{c}{\epsilon}\partial_x\psi+\frac{s}{\epsilon}\partial_y\psi+\frac{\sigma_{T}(x, y)}{\epsilon^2}\psi-(\frac{\sigma_{T}(x,y)}{\epsilon}-\sigma_{a}(x, y))\int_{\mathbb{S}^{2}}\kappa(c, s, c', s')\psi(x, y, c, s, t)du_xdu_y.
\end{equation}

In this section, we present the discretization scheme for the time-dependent RTE in x-y geometry. Specifically, we adopt the midpoint method for temporal discretization, the discrete ordinate method (DOM) for angular discretization, and the ATFPS for spatial discretization. The proposed methodology extends naturally to physically relevant three-dimensional settings.

\subsection{Time discretization}
For time-dependent problems, as $\epsilon \to 0$, the CFL condition forces explicit time-stepping schemes to obey $\Delta t \lesssim \mathcal{O}(\epsilon)$ \cite{jin1999efficient}. To overcome this stability limitation, fully implicit temporal discretizations are commonly adopted. For better accuracy, we employ the second-order midpoint method. Let $\Delta t$ denote the time step size and $N$ the total number of steps, so that the discrete time points are $t_n = n\Delta t$ for $0 \le n \le N$. Furthermore, we denote the discrete source term as $q^{n-1/2} = q(x,y,t_{n-1/2})$ and the numerical approximation to the angular flux as $\psi^n(x,y,c,s) \approx \psi(x,y,c,s,t_n)$.
Using the midpoint scheme, we derive the following semi-discrete system:
\begin{equation}\label{eq:sec2:103}
    \frac{\psi^{n}-\psi^{n-1}}{\Delta t}+\mathcal{L}_{\epsilon}\frac{\psi^{n}+\psi^{n-1}}{2}=q^{n-1/2}, \quad 1\leq n\leq N, \  n \in\mathbb{Z}.
\end{equation}

According to equations \eqref{eq:sec1:2}, \eqref{eq:sec1:3} and \eqref{eq:sec1:4}, the initial condition, boundary condition, and interface condition after time discretization are as follows. 

The initial condition is given by:
\begin{equation}\label{eq:sec2:106}
    \psi^{0}(x,y,c,s)=\Psi^{0}(x,y,c,s),\quad (x, y)\in\Omega.
\end{equation}
The Dirichlet boundary condition takes the form:
\begin{equation}\label{eq:sec2:107}
    \psi^{n}(x,y,c,s)=\Psi(x,y,c,s,t_{n})\triangleq\Psi^{n},\quad (x,y,c,s)\in\Gamma^{-},\quad 1\leq n\leq N.
\end{equation}
The interface condition for any interface $\alpha$ is expressed as:
\begin{equation}\label{eq:sec2:108}
    [\psi^{n}(x,y,c,s)]|_{\alpha}=0,\quad 1\leq n\leq N.
\end{equation}

Given the solution $\psi^{n-1}$ at time step $n-1$, we compute $\psi^{n}$ by solving the coupled system consisting of equations \eqref{eq:sec2:103}, \eqref{eq:sec2:107}, and \eqref{eq:sec2:108}. Beginning with the initial condition $\psi^{0}$ specified in \eqref{eq:sec2:106}, this time-marching scheme generates the complete temporal solution sequence $\{\psi^{n}\}_{n=1}^{N}$ through successive time steps. 

\subsection{Angular discretization by DOM}
The DOM approximates the continuous angular domain by a finite set of discrete velocity directions and replaces the scattering integral in the RTE with an appropriate numerical quadrature. The construction and accuracy of such quadrature rules have been extensively studied; see, for example, \cite{azmy2010advances, lewis1984computational}.
We denote the quadrature set for the RTE in the x-y geometry by
\[
\{c_{m},s_{m},\omega_{m}\}_{m\in\mathcal{M}},\quad \mathcal{M}=\{1,2,\dots,4M\}.
\]
where $\mathbf{u}_{m} = (c_m, s_m)$ represents the $m$-th discrete propagation direction and $\omega_m$ is its associated quadrature weight. An example of DOM in x-y geometry is illustrated in Figure~\ref{fig:DOM}.

\begin{figure}[htbp]
    \centering    
    \includegraphics[width=0.6\textwidth]{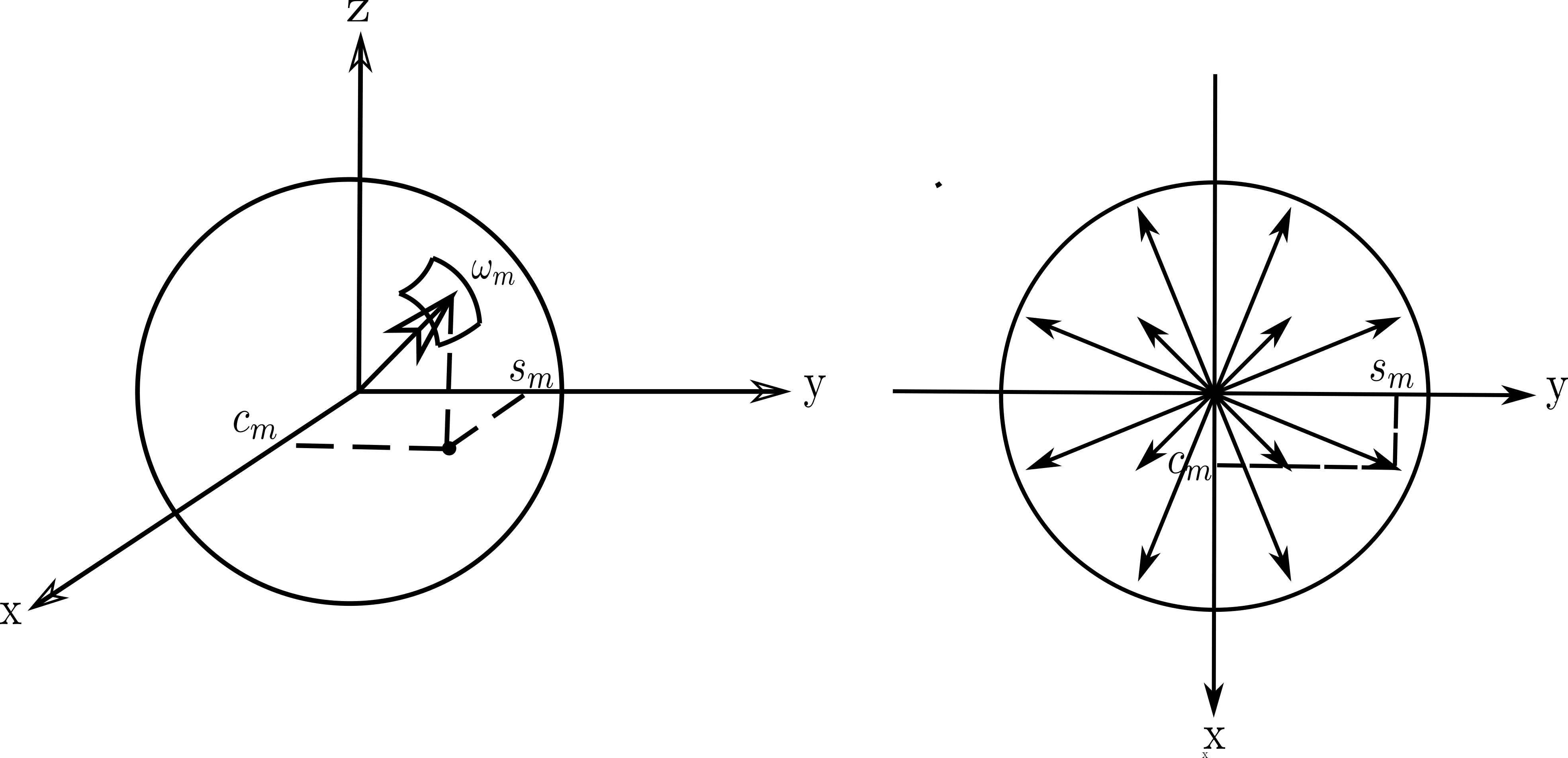}
    \caption{DOM in x-y geometry. Left figure: a quadrature point in three spatial dimensional case. Right figure: example of DOM ($S_{N}$) in x-y geometry with $N=2$.}
    \label{fig:DOM}
\end{figure}

After discretizing the angular domain using DOM, equation~\eqref{eq:sec2:103} takes the form:
\begin{equation}\label{eq:sec3:23}
    \frac{\bm{\psi}^{n}-\bm{\psi}^{n-1}}{\Delta t}+\mathcal{L}_{\epsilon, \mathcal{M}}\frac{\bm{\psi}^{n}+\bm{\psi}^{n-1}}{2}=q^{n-1/2},\quad \ 1\leq n\leq N,\ n\in\mathbb{Z}.
\end{equation}
Here, $\bm{\psi}^{n}=(\psi_{m}^{n})_{m\in\mathcal{M}}$ is a vector-valued function with components satisfying $\psi_{m}^{n}\approx\psi^{n}(x, y, c_m, s_m)$. Based on the definition of  $\mathcal{L}_{\epsilon}$ in \eqref{eq:sec2:40}, the operator $\mathcal{L}_{\epsilon, \mathcal{M}}$ is defined as follows: for any vector-valued function $f=(f_{m})_{m\in \mathcal{M}}$,
\begin{equation}\label{eq:sec2:41}
    \mathcal{L}_{\epsilon, \mathcal{M}}f = (\mathcal{L}_{\epsilon, m}f)_{m\in\mathcal{M}},\quad \mathcal{L}_{\epsilon, m}f = \frac{c_{m}}{\epsilon}\cdot\partial_{x}f_{m} +\frac{s_{m}}{\epsilon}\cdot\partial_{y}f_{m}+\frac{\sigma_{T}}{\epsilon^2}f_{m}-(\frac{\sigma_{T}}{\epsilon^{2}}-\sigma_{a})\sum_{p\in \mathcal{M}}\kappa_{m,p}f_{p}\omega_{p},
\end{equation}
where $\kappa_{m,p}$ approximates the scattering phase function $\kappa(\mathbf{u}_{m},\mathbf{u}_{p})$ \cite{chen2018uniformly}. 

The initial condition, Dirichlet boundary condition and the interface condition for $\bm{\psi}^{n}$ then becomes:
\begin{equation}\label{eq:sec2:27}
    \bm{\psi}^{0}(x,y)=\Big(\Psi^{0}(x,y,c_{m},s_{m})\Big)_{m\in\mathcal{M}},\quad (x, y)\in\Omega.
\end{equation}

\begin{equation}\label{eq:sec2:28}
    \bm{\psi}_{\Gamma^{-}}^{n}(x,y) \triangleq \big(\bm{\psi}_{m}^{n}(x,y)\big)_{\mathbf{n}_{x,y}\cdot \mathbf{u}_{m}<0}=\big(\Psi^{n}(x,y,c_{m}, s_{m})\big)_{\mathbf{n}_{x,y}\cdot \mathbf{u}_{m}<0}\triangleq\bm{\Psi}_{\Gamma^{-}}^{n}(x,y),\quad 1\leq n\leq N.
\end{equation}

\begin{equation}\label{eq:sec2:29}
    [\bm{\psi}^{n}(x,y)]|_{\alpha}=0,\quad 1\leq n\leq N.
\end{equation}

\subsection{Spatial discretization by ATFPS}
This subsection introduces the tailored finite point scheme (TFPS) \cite{han2014two}, applies it to our discretization framework, and then presents its adaptive extension, the ATFPS \cite{AdaptiveTFPS}. This adaptive variant exploits any low-rank structure present in the angular domain to compress it adaptively while preserving the underlying layer structure, thereby enhancing computational efficiency. 
\subsubsection{TFPS}
The TFPS employs the fundamental solutions of the RTE with constant coefficients as local basis functions, where the constants are taken to be the cell averages. These local basis functions are then stitched together to construct a globally defined approximate solution by enforcing continuity at the centers of interior cell interfaces and consistency with the prescribed boundary conditions at the centers of boundary cell interfaces. Notably, TFPS achieves uniform second-order accuracy in the mesh width $h$ with respect to the mean free path $\epsilon$ , even in the presence of boundary layers. In the following, we present the TFPS in detail.

To begin with, we uniformly partition the physical domain $\Omega = [0,1] \times [0,1]$ into an $I \times I$ grid with mesh size $h = 1/I$. The grid points are defined as $x_i = ih$ ($0 \leq i \leq I$) and $y_j = jh$ ($0 \leq j \leq I$), as illustrated in Figure \ref{fig:mesh}. Each cell $C_{i,j}$ is defined as the rectangle $[x_{i-1},x_{i}]\times[y_{j-1},y_{j}]$, with $1\leq i\leq I$ and $1\leq j\leq I$ serving as the row and column indices. The set of all cells is denoted as $\mathcal{C}=\{C_{i,j}\}_{1\leq i,j\leq I}$.
\begin{figure}[htbp]
    \centering
    \includegraphics[width=0.3\linewidth]{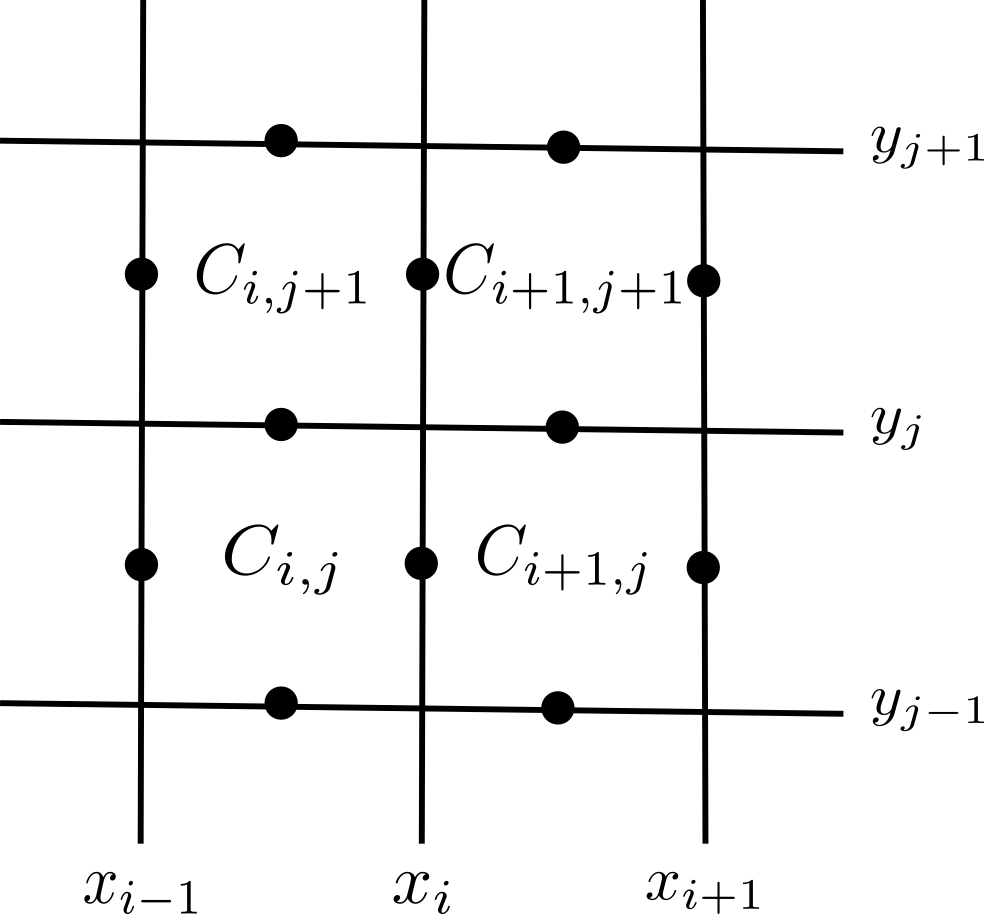}
    \caption{Mesh}
    \label{fig:mesh}
\end{figure}
The TFPS approximates the coefficients - namely, the total and absorption cross sections, as well as the source term -by their respective cell averages, which leads to the following scheme:
\begin{equation}\label{eq:sec2:20}
    \frac{\bar{\bm{\psi}}^{n}-\bar{\bm{\psi}}^{n-1}}{\Delta t}+\mathcal{L}_{\epsilon, \mathcal{M}, h}\frac{\bm{\psi}^{n}+\bm{\psi}^{n-1}}{2}=\bar{q}^{n-1/2},\quad \ 1\leq n\leq N,\ n\in\mathbb{Z},
\end{equation}
with 
\[
\bar{\bm{\psi}}^{n,s} = \sum_{C\in\mathcal{C}}\frac{\mathbf{1}_{C}}{|C|}\int_{C}\bm{\psi}^{n,s}dxdy,\quad \bar{\bm{\psi}}^{n-1} = \sum_{C\in\mathcal{C}}\frac{\mathbf{1}_{C}}{|C|}\int_{C}\bm{\psi}^{n-1}dxdy,\quad \bar{q}^{n-1/2} = \sum_{C\in\mathcal{C}}\frac{\mathbf{1}_{C}}{|C|}\int_{C}q^{n-1/2}dxdy.
\]
Here, $\mathbf{1}_{C}$ denotes the characteristic function of cell $C$, and $|C|$ is the area of cell $C$.
Based on the definition of $\mathcal{L}_{\epsilon,\mathcal{M}}$ in \eqref{eq:sec2:41}, the spatially discretized operator $\mathcal{L}_{\epsilon,\mathcal{M}, h}$ acts on a vector-valued function $f=(f_{m})_{m\in \mathcal{M}}$ as:
\[
\mathcal{L}_{\epsilon, \mathcal{M}, h}f = (\mathcal{L}_{\epsilon, m, h}f)_{m\in\mathcal{M}},\quad \mathcal{L}_{\epsilon, m, h}f = \frac{c_{m}}{\epsilon}\cdot\partial_{x}f_{m} +\frac{s_{m}}{\epsilon}\cdot\partial_{y}f_{m}+\frac{\sigma_{T}}{\epsilon^2}f_{m}-(\frac{\bar{\sigma}_{T}}{\epsilon^{2}}-\bar{\sigma}_{a})\sum_{p\in \mathcal{M}}\kappa_{m,p}f_{p}\omega_{p},
\]
where $\bar{\sigma}_{T}$ and $\bar{\sigma}_{a}$ are their respective cell averages:
\[
\bar{\sigma}_{T} = \sum_{C\in\mathcal{C}}\frac{\mathbf{1}_{C}}{|C|}\int_{C}\sigma_{T}dxdy,\quad \bar{\sigma}_{a} = \sum_{C\in\mathcal{C}}\frac{\mathbf{1}_{C}}{|C|}\int_{C}\sigma_{a}dxdy.
\]
In practice, equation \eqref{eq:sec2:20} is solved via a fixed-point iteration to guarantee convergence:
\begin{equation}\label{eq:sec2:21}
\begin{aligned}
    &\frac{\bar{\bm{\psi}}^{n, s-1}-\bar{\bm{\psi}}^{n-1}}{\Delta t}+\mathcal{L}_{\epsilon, \mathcal{M}, h}\frac{\bm{\psi}^{n,\mathrm{ds}}+\bm{\psi}^{n-1}}{2} = \bar{q}^{n-1/2}, \\
    &\bm{\psi}^{n,s} = (1-\Delta t)\bm{\psi}^{n, s-1} + \Delta t \bm{\psi}^{n,\mathrm{ds}}, \quad s \geq 1.
\end{aligned}
\end{equation}
Upon convergence, the final iterate $\bm{\psi}^{n,s}$ is accepted as the numerical solution $\bm{\psi}^n$ at time step $n$.

Then the discrete solution space for the TFPS is constructed as $\mathcal{F}+P_{0}(\mathcal{C},\mathcal{M})$, where $\mathcal{F}$ denotes the space spanned by local fundamental solutions and $P_{0}(\mathcal{C},\mathcal{M})$ represents a space of piecewise-constant special solutions. Specifically,
\begin{equation}\label{eq:sec2:26}
    \mathcal{F}=\{\sum_{C\in\mathcal{C}}\sum_{k\in \mathcal{V}}\alpha_{C}^{(k)}\phi_{C}^{k}|\forall \alpha_{C}^{(k)}\in\mathbb{R}\}, \quad P_{0}(\mathcal{C},\mathcal{M}) = \{\sum_{C\in\mathcal{C}}\mathbf{v}\cdot\mathbf{1}_{C}|\mathbf{v}\in\mathbb{R}^{4M}\}.
\end{equation}
Here, $\{\phi_{C}^{k}\}_{k\in\mathcal{V}}$ with $\mathcal{V}=\{1,2,\dots,8M\}$ denotes the set of local separable solutions to $\mathcal{L}_{\epsilon,\mathcal{M},h}f=0$ in cell $C$. Their explicit forms are provided in Appendix \ref{appendix:basis}. 

To be consistent with the discrete solution space, the boundary and continuity conditions for equation \eqref{eq:sec2:21} are imposed as follows:
\begin{equation}\label{eq:sec2:22}
    \bm{\psi}_{\Gamma^{-}}^{n,s}(\bar{\mathbf{x}}_{\mathfrak{i}})=\bm{\Psi}_{\Gamma^{-}}^{n}(\bar{\mathbf{x}}_{\mathfrak{i}}) ,\quad \quad \forall \mathfrak{i}\in \mathcal{I}_{\mathrm{b}}.
\end{equation}
\begin{equation}\label{eq:sec2:23}
    [\bm{\psi}^{n,s}(\bar{\mathbf{x}}_{\mathfrak{i}})]=0,\quad \forall \mathfrak{i}\in \mathcal{I}_{\mathrm{in}}.
\end{equation}
Here, the notation $\mathcal{I}$ represents the set of all cell interfaces, while $\mathcal{I}_{\mathrm{b}}$ and $\mathcal{I}_{\mathrm{in}}$ represent its restriction to the physical domain boundary and the physical domain interior, respectively. $\bar{\mathbf{x}}_{\mathfrak{i}}$ denotes the center of interface $\mathfrak{i}$ for any $\mathfrak{i}\in\mathcal{I}$. The symbol $[\cdot]$ denotes the jump in the function value at a given point across an interface. 

Note that in the computation of $\bm{\psi}^{n,s}$, its special solution component can be obtained directly from equation \eqref{eq:sec2:21}, since the operator $\mathcal{L}_{\epsilon, \mathcal{M}, h}$ maps all fundamental solutions to zero. The fundamental solution component are then uniquely determined by enforcing the continuity and boundary conditions specified in equations \eqref{eq:sec2:22} and \eqref{eq:sec2:23}.

\subsubsection{ATFPS}
Different from TFPS, the discrete solution space for the ATFPS solution with threshold $\delta$ for basis function selection is $\mathcal{F}_{\delta}+P_{0}(\mathcal{C},\mathcal{M})$, where the new fundamental solution space $\mathcal{F}_{\delta}$ is defined as:
\begin{equation}\label{eq:sec2:30}
    \mathcal{F}_{\delta}=\{\sum_{C\in\mathcal{C}}\sum_{k\in \mathcal{V}_{\delta, C}}\alpha_{C}^{(k)}\phi_{C}^{k}|\forall \alpha_{C}^{(k)}\in\mathbb{R}\}, \quad P_{0}(\mathcal{C},\mathcal{M}) = \{\sum_{C\in\mathcal{C}}\mathbf{v}\cdot\mathbf{1}_{C}|\mathbf{v}\in\mathbb{R}^{4M}\}.
\end{equation}
Here, $\mathcal{V}_{\delta, C}$ is a subset of $\mathcal{V}$ comprising indices of basis functions in cell $C$ that exhibit slow decay, rigorously defined as
\[
\mathcal{V}_{\delta, C} = \big\{k\in\mathcal{V}\big|\vert\phi_{C}^{k}(\bar{\mathbf{x}}_{C})\vert > \delta\big\}.
\]
where $\bar{\mathbf{x}}_{C}$ denotes the center point of cell $C$. The selection of basis functions is clearly governed by the threshold $\delta$ and the values of $\sigma_{T}$, $\sigma_{a}$, $\epsilon$, and $\kappa_{m,n}$ within cell $C$.

For convenience in the subsequent presentation, we introduce the following notations. The index set $\mathcal{V}_{\delta, C}$ can be decomposed as $\mathcal{V}_{\delta, C}=\cup_{\mathfrak{i}\in C}\mathcal{V}_{\delta, C, \mathfrak{i}}$, where $\mathcal{V}_{\delta, C,\mathfrak{i}}$ denotes the indices of the slow-decaying basis functions in cell $C$ that are centered on the interface $\mathfrak{i}$. Furthermore, the index sets $\mathcal{V}_{\delta,C,\mathfrak{i}}$ that share the same interface $\mathfrak{i}$ can be merged to form a new index set $\mathcal{V}_{\delta,\mathfrak{i}}=\bigcup_{C:\mathfrak{i}\cap C\ne\emptyset}\mathcal{V}_{\delta,C,\mathfrak{i}}$.

Using the same special solution as in the standard TFPS-which ensures that equation \eqref{eq:sec2:21} is satisfied-the fundamental solution component of the ATFPS solution $\bm{\psi}_{\delta}^{n,s}\in\mathcal{F}_{\delta}+P_{0}(\mathcal{C},\mathcal{M})$ can be determined by the following specially designed continuity and boundary conditions:
\begin{equation}\label{eq:sec3:31}
    \mathcal{P}_{\delta,\mathfrak{i}}(\bm{\psi}_{\Gamma^{-},\delta}^{n,s}(\mathbf{x}_{\mathfrak{i},\mathrm{mid}})-\bm{\Psi}_{\Gamma^{-}}^{n}(\mathbf{x}_{\mathfrak{i},\mathrm{mid}}))=0 ,\quad \quad \forall \mathfrak{i}\in \mathcal{I}_{\mathrm{b}},\quad 1\leq n\leq N.
\end{equation}
\begin{equation}\label{eq:sec3:32}
    \mathcal{P}_{\delta,\mathfrak{i}}([\bm{\psi}^{n,s}(\mathbf{x}_{\mathfrak{i},\mathrm{mid}})])=0,\quad \forall \mathfrak{i}\in \mathcal{I}_{\mathrm{in}},\quad 1\leq n\leq N.
\end{equation}

Here, the projection operator $\mathcal{P}_{\delta,\mathfrak{i}}$ maps the $2M$-dimensional vector space (when $\mathfrak{i}\in\mathcal{I}_{\mathrm{b}}$) or the $4M$-dimensional vector space (when $\mathfrak{i}\in\mathcal{I}_{\mathrm{in}}$) onto a $\vert\mathcal{V}_{\delta,\mathfrak{i}}\vert$-dimensional vector, whose entries correspond to the components of the original vector along the important velocity modes. Its specific definition is provided in the Appendix \ref{appendix:projection_operator}.

When an inherent low-rank structure is present in the angular domain, the ATFPS exploits this property by retaining only a reduced set of local basis functions. Specifically, the total number of retained basis functions, $\sum_{C \in \mathcal{C}} |\mathcal{V}_{\delta, C}|$, can be significantly smaller than the original count $\sum_{C \in \mathcal{C}} |\mathcal{V}|$. This reduction leads to a compressed linear system, thereby enhancing computational efficiency.

However, since the ATFPS retains only the slowly decaying basis functions, its raw approximation—denoted by $\bm{\psi}_{\delta}^{n,s}$—captures only the smooth component of the exact solution to the system defined by equations \eqref{eq:sec2:21}-\eqref{eq:sec2:23}. To recover the fine-scale features associated with boundary and interface layers (i.e., the information carried by the discarded fast-decaying basis functions), we apply the layer reconstruction technique introduced in \cite{AdaptiveTFPS}.

The resulting reconstructed solution is denoted by $\tilde{\bm{\psi}}_{\delta}^{n,s,*}$, where the superscript asterisk indicates that layer information has been restored. As established in \cite{han2014two, AdaptiveTFPS}, this reconstructed solution $\tilde{\bm{\psi}}_{\delta}^{*}$ achieves uniform second-order accuracy in the mesh size $h$ and first-order accuracy in the compression parameter $\delta$ for approximating the uncompressed solution $\bm{\psi}$ over the entire physical domain—uniformly with respect to the mean free path $\epsilon$.

\begin{remark}
    As the time step $\Delta t$ decreases, the convergence of the iteration \eqref{eq:sec2:21} gradually slows down. Therefore, when $\Delta t$ is sufficiently small, we try to use the following iteration instead:
    \[
    \frac{\bar{\bm{\psi}}^{n, s}-\bar{\bm{\psi}}^{n-1}}{\Delta t}+\mathcal{L}_{\epsilon, \mathcal{M}, h}\frac{\bm{\psi}^{n,s-1}+\bm{\psi}^{n-1}}{2}=\bar{q}^{n-1/2},\quad \ 1\leq n\leq N,\ n\in\mathbb{Z},\quad s\geq 1,\ s \in\mathbb{Z}.
    \]
However, since the rapidly decaying basis functions have a non-negligible contribution to the cell average of $\bm{\psi}^{n, s}$, we cannot solely operate on the slow-decaying basis functions, which causes the aforementioned angular compression to fail. Consequently, we replace all cell averages in \eqref{eq:sec2:21} with cell center values as follows:
\begin{equation}\label{eq:sec2:24}
    \frac{\bm{\psi}_{c}^{n, s}-\bm{\psi}_{c}^{n-1}}{\Delta t}+\mathcal{L}_{\epsilon, \mathcal{M}, h}\frac{\bm{\psi}^{n,s-1}+\bm{\psi}^{n-1}}{2}=q_{c}^{n-1/2},\quad \ 1\leq n\leq N,\ n\in\mathbb{Z},\quad s\geq 1,\ s \in\mathbb{Z}.
\end{equation}
Here, the subscript $c$ denotes the discrete cell center values. 

\end{remark}

\section{Recursive Skeleton (RS)}
\label{sec:RS}
As observed, the dominant computational cost arises from solving the coupled system of equations \eqref{eq:sec2:21} (or \eqref{eq:sec2:24}), \eqref{eq:sec3:31}, and \eqref{eq:sec3:32}. This system essentially determines a solution $f\in\mathcal{F}_{\delta}$ from its projected values at boundary interface centers, $\mathcal{P}_{\delta,\mathfrak{i}}^{k}f(\bar{\mathbf{x}}_{\mathfrak{i}})$ for all $\mathfrak{i}\in \mathcal{I}_{\mathrm{b}}$, $k\in\mathcal{V}_{\delta,\mathfrak{i}}$, and its projected jump values at interior interface centers, $\mathcal{P}_{\delta,\mathfrak{i}}^{k}[f(\bar{\mathbf{x}}_{\mathfrak{i}})]$ for all $\mathfrak{i}\in \mathcal{I}_{\mathrm{in}}$, $k\in\mathcal{V}_{\delta,\mathfrak{i}}$. We term this correspondence the solution map. Since this map is used repeatedly for each time step ($n$) and iteration step ($s$) in \eqref{eq:sec2:21} or \eqref{eq:sec2:24}, our next objective is to derive a multilevel decomposition for its explicit expression.

The underlying technique, known as the recursive skeleton method (RSM) \cite{ho2012fast}, has been successfully applied to integral formulations of steady‑state radiative transport equations \cite{fan2019fast, ren2019fast}. In this work, we extend this approach to our adaptively compressed system and reinterpret it within a new framework.

In this section, we begin by illustrating the multilevel decomposition of the fundamental solution space $\mathcal{F}_{\delta}$. The space is recursively decomposed as $\mathcal{F}_{\delta}=\mathcal{F}_{\delta}^{(0)}=\sum_{l=1}^{(L)}\mathcal{G}_{\delta}^{(l)}+\mathcal{F}_{\delta}^{(L)}$, following the relation $\mathcal{F}_{\delta}^{(l)}=\mathcal{F}_{\delta}^{(l+1)}+\mathcal{G}_{\delta}^{(l+1)}$. 
Finding a solution $f\in\mathcal{F}_{\delta}$ is then transformed into a sequence of projections: first finding its projection $g_{\delta}^{(l)}$ onto $\mathcal{G}_{\delta}^{(l)}$ from $l=1$ to $l=L$, and then finding its projection $f^{(L)}$ onto the coarsest space $\mathcal{F}_{\delta}^{(L)}$. Subsequently, $f$ is reconstructed by summing these components: $f = f^{(L)} + \sum_{l=1}^{(L)} g_{\delta}^{(l)}$.
Representing this process using linear operators yields the multilevel decomposition of the solution map. Finally, we present an analysis of the computational cost for assembling this multilevel decomposition and the storage requirements necessary for storing the decomposition.

\subsection{Notation}
We begin by introducing notation that will be used in subsequent sections. 
\begin{itemize}
    \item $L$: the total number of levels in the physical mesh hierarchy.
    \item $C^{(l)}$ for $0\leq l\leq L$: the set of all physical cells at mesh level $l$, as illustrated in Figure~\ref{fig:MultilevelMesh}.
    \item $I^{(l)}$ for $0\leq l\leq L$: the number of cells along each spatial axis at level $l$. Clearly, $I^{(l)} = \frac{I}{2^{l}}$, where $I$ denotes the number of cells on the finest mesh (as defined in the previous section).
    \item $\mathcal{I}^{(l)}$ for $0\leq l\leq L$: the set of all cell interfaces at level $l$, partitioned as $\mathcal{I}^{(l)}=\mathcal{I}_{\mathrm{b}}^{(l)}\cup\mathcal{I}_{\mathrm{in}}^{(l)}$, where $\mathcal{I}_{\mathrm{b}}^{(l)}$ and $\mathcal{I}_{\mathrm{in}}^{(l)}$ denote the sets of boundary and interior interfaces at level $l$, respectively. Note that under our multilevel construction, we have the nested relations $\mathcal{I}^{(l_{1})}\subset \mathcal{I}^{(l_{2})}$, $\mathcal{I}_{\mathrm{in}}^{(l_{1})}\subset \mathcal{I}_{\mathrm{in}}^{(l_{2})}$, $\mathcal{I}_{\mathrm{b}}^{(l_{1})}\subset \mathcal{I}_{\mathrm{b}}^{(l_{2})}$ for $l_{1}<l_{2}$.
    \item $\mathcal{F}_{\delta}^{(l)}$ for $0\leq l\leq L$ and $\mathcal{G}_{\delta}^{(l)}$ for $1\leq l\leq L$: the solution spaces for each level of mesh as defined in \eqref{eq:sec4:10} and \eqref{eq:sec4:11}.
    \item $\{\phi_{C,\mathfrak{i}}^{(l),k}\}_{C\in\mathcal{C}^{(l)},\mathfrak{i}\in\mathcal{I}^{(l)}\cap C,k\in\mathcal{V}_{\delta,C,\mathfrak{i}}}$ for $0\leq l\leq L$: the localized basis functions for $\mathcal{F}_{\delta}^{(l)}$, which is defined in \eqref{eq:sec3:2}.
    \item $\{\phi_{C,\mathfrak{i}}^{(l),k}\}_{C\in\mathcal{C}^{(l)},\mathfrak{i}\in(\mathcal{I}^{(l-1)}\setminus\mathcal{I}^{(l)})\cap C,k\in\mathcal{V}_{\delta,C,\mathfrak{i}}}$ for $1\leq l\leq L$: the localized basis functions for $\mathcal{G}_{\delta}^{(l)}$, which is defined in \eqref{eq:sec3:3}.
    \item $B_{\delta}^{(l)}$ for $0\leq l\leq L$, and $P_{\delta}^{(l)}$, $Q_{\delta}^{(l)}$ and $R_{\delta}^{(l)}$ for $0\leq l\leq L-1$: the key components of the multilevel decomposition of the solution operator, as defined in subsection \ref{subsec:multilevel_decomposition_operator}.
\end{itemize}

\subsection{Construction of nested solution spaces and the explicit expressions of their basis}
In this subsection, we first construct a hierarchy of cells and cell interfaces, and then define the corresponding solution spaces based on the nesting relationships. For computation efficiency, we also explicitly construct the basis functions for the solution spaces at each mesh level and present their detailed properties.

\subsubsection{Nested cells and cell interfaces}
Denote the total number of mesh levels by $L+1$, where level $0$ corresponds to the finest mesh and level $L$ to the coarsest. For each mesh level $l$ ($0 \leq l \leq L$), let $I^{(l)}$ denote the number of cells along each coordinate axis, and assume that the coarsest level satisfies $I^{(L)} = O(1)$.

Let $\mathcal{C}^{(l)}$ be the set of all cells at level $l$, and let $\mathcal{I}^{(l)}$ denote the set of all cell interfaces at that level, which is partitioned into boundary interfaces $\mathcal{I}_{\mathrm{b}}^{(l)}$ and interior interfaces $\mathcal{I}_{\mathrm{in}}^{(l)}$, i.e., $\mathcal{I}^{(l)} = \mathcal{I}_{\mathrm{b}}^{(l)} \cup \mathcal{I}_{\mathrm{in}}^{(l)}$. At the finest level ($l = 0$), we simplify the notation by writing $I^{(0)}$, $\mathcal{C}^{(0)}$, $\mathcal{I}^{(0)}$, $\mathcal{I}_{\mathrm{b}}^{(0)}$, $\mathcal{I}_{\mathrm{in}}^{(0)}$ as $I$, $\mathcal{C}$, $\mathcal{I}$, $\mathcal{I}_{\mathrm{b}}$, $\mathcal{I}_{\mathrm{in}}$, respectively, 
which is consistent with the notation used in the previous section. An illustration of the cells, interfaces, and grid points across different mesh levels is provided in Figure \ref{fig:MultilevelMesh}.

\begin{figure}[htbp]
    \centering
    \begin{minipage}{0.25\textwidth}
        \centering
        \includegraphics[width=\linewidth]{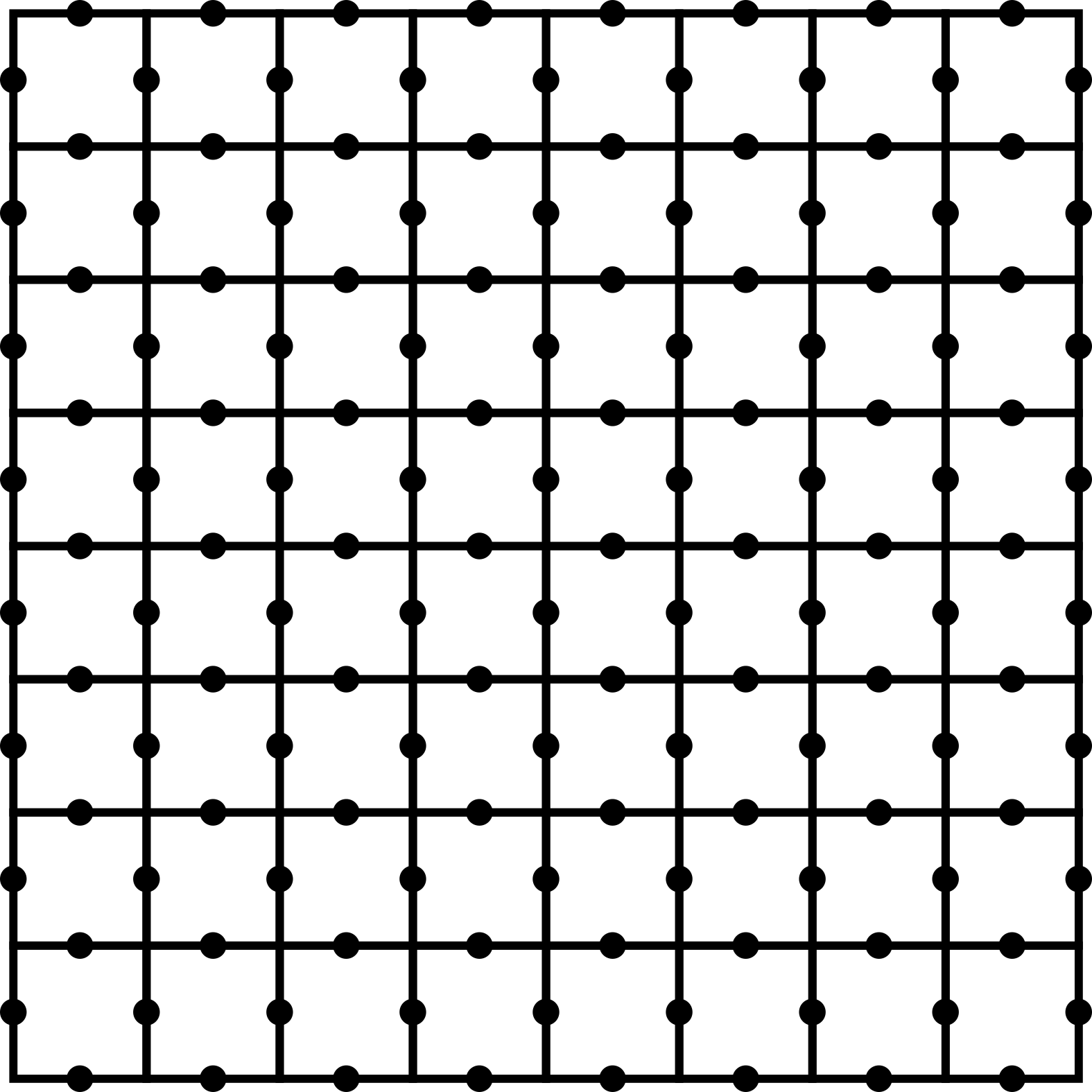}
        \small (a)
    \end{minipage}
    \begin{minipage}{0.25\textwidth}
        \centering
        \includegraphics[width=\linewidth]{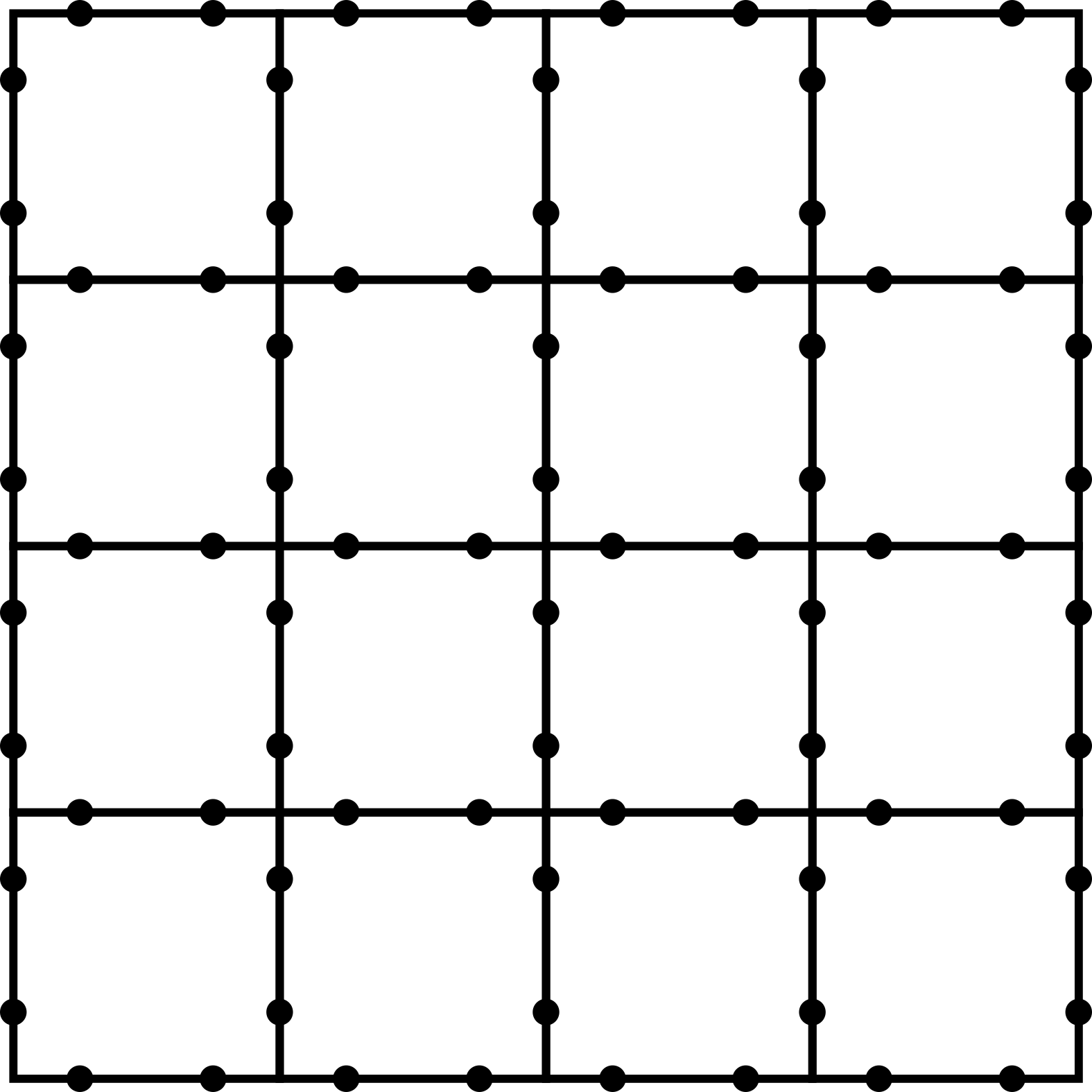}
        \small (b)
    \end{minipage}
    \begin{minipage}{0.25\textwidth}
        \centering
        \includegraphics[width=\linewidth]{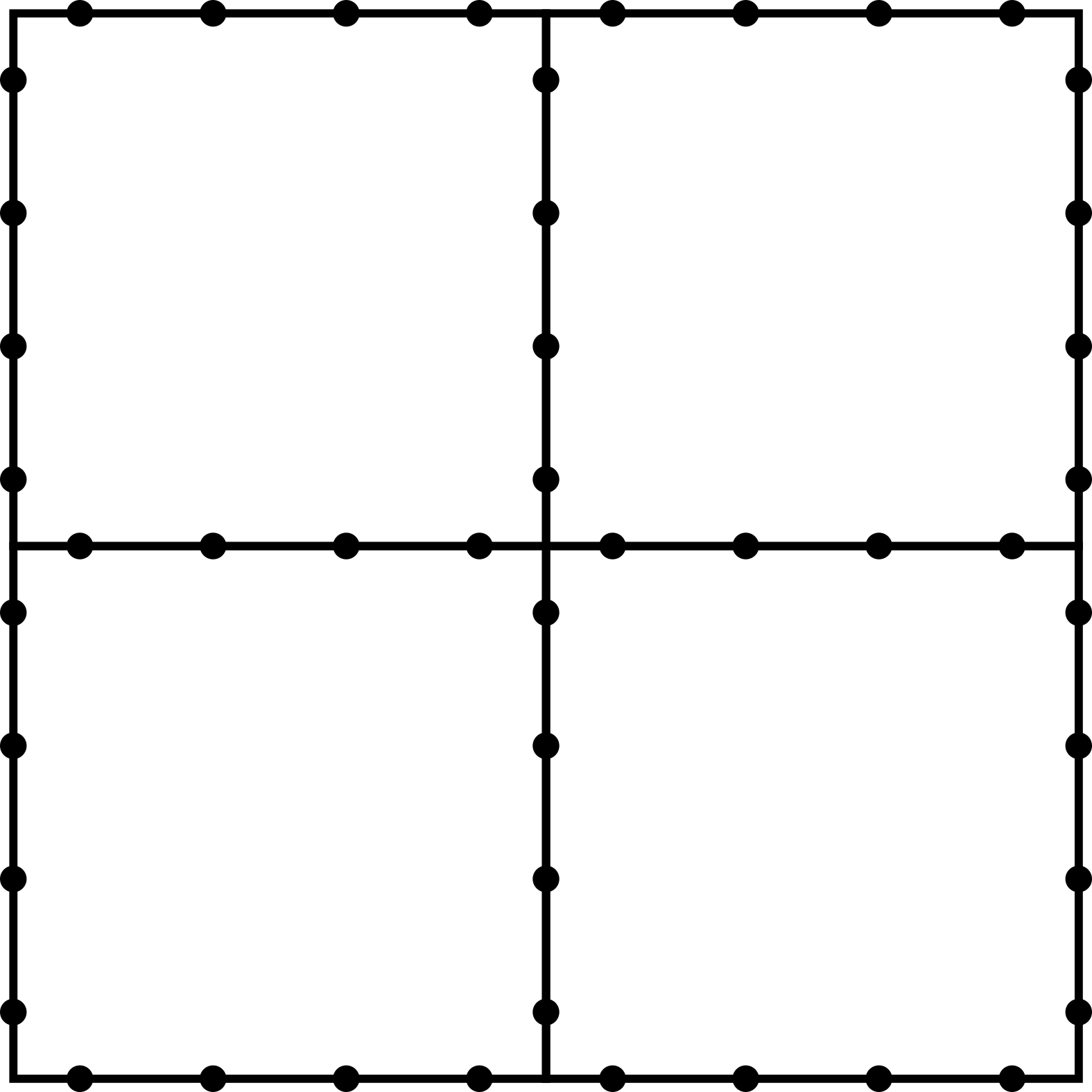}
        \small (c)
    \end{minipage}
    \caption{Cells, interfaces and grid points on each level of mesh. Here we take $I=8$ and $L=2$ for example. (a) level 0; (b) level 1; (c) level 2.}
    \label{fig:MultilevelMesh}
\end{figure}

\subsubsection{Nested solution spaces $\mathcal{F}_{\delta}^{(l)} (0\leq l\leq L)$ and $\mathcal{G}_{\delta}^{(l)} (1\leq l\leq L)$}
Based on the above notations of nested cells and interfaces, we introduce the following solution spaces:
\begin{equation}\label{eq:sec4:10}
    \mathcal{F}_{\delta}^{(0)}=\mathcal{F}_{\delta},\quad \mathcal{F}_{\delta}^{(l)}\triangleq\Big\{f\in\mathcal{F}_{\delta}^{(l-1)}\Big|\mathcal{P}_{\delta,\mathfrak{i}}^{k}[f(\bar{\mathbf{x}}_{\mathfrak{i}})]=0,\forall \mathfrak{i}\in\mathcal{I}_{\mathrm{in}}^{(l-1)}\setminus\mathcal{I}_{\mathrm{in}}^{(l)},\forall k\in\mathcal{V}_{\delta,\mathfrak{i}}\Big\},\quad 1\leq l\leq L;
\end{equation}
\begin{equation}\label{eq:sec4:11}
    \mathcal{G}_{\delta}^{(l)} = \Big\{f\in\mathcal{F}_{\delta}^{(l-1)}\Big|\mathcal{P}_{\delta,\mathfrak{i}}^{k}(f(\bar{\mathbf{x}}_{\mathfrak{i}})|_{C})=0,\forall C\in\mathcal{C}^{(l)},\ \forall\mathfrak{i}\in\mathcal{I}^{(l)}\cap C,\ \forall k\in\mathcal{V}_{\delta,C,\mathfrak{i}}\Big\},\quad 1\leq l \leq L.
\end{equation}
Here, for any cell $C\in\mathcal{C}^{(l)}$ with $0\leq l\leq L$, we extend the notation $\mathcal{V}_{\delta,C,\mathfrak{i}}$ for cells across different mesh levels as follows: $\mathcal{V}_{\delta,C,\mathfrak{i}}=\bigcup_{C_{0}\subset C, C_{0}\in\mathcal{C}}\mathcal{V}_{\delta,C_{0},\mathfrak{i}}$.
The relationship between $\mathcal{F}_{\delta}^{(l)}$ and $\mathcal{G}_{\delta}^{(l)}$ can then be established in Lemma \ref{lemma:FG}.

\begin{lemma}\label{lemma:FG}
    \[\mathcal{F}_{\delta}^{(l-1)}=\mathcal{F}_{\delta}^{(l)}\oplus\mathcal{G}_{\delta}^{(l)},\quad 1\leq l\leq L.\]
\end{lemma}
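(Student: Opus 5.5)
The plan is to view everything as finite-dimensional linear algebra on the coefficient vectors and to prove the decomposition through a single linear map on $\mathcal{F}_{\delta}^{(l-1)}$ that is bijective. For every $f\in\mathcal{F}_{\delta}^{(l-1)}$ define the \emph{level-$l$ trace map}
\[
T^{(l)}f=\Big(\mathcal{P}_{\delta,\mathfrak{i}}^{k}\big(f(\bar{\mathbf{x}}_{\mathfrak{i}})|_{C}\big)\Big)_{C\in\mathcal{C}^{(l)},\ \mathfrak{i}\in\mathcal{I}^{(l)}\cap C,\ k\in\mathcal{V}_{\delta,C,\mathfrak{i}}},
\]
which collects the projected one-sided values of $f$ on the coarse interfaces, seen from inside each coarse cell. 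By definition, $\mathcal{G}_{\delta}^{(l)}=\ker T^{(l)}$. The space $\mathcal{F}_{\delta}^{(l)}$ is the subspace of $\mathcal{F}_{\delta}^{(l-1)}$ on which the projected jumps vanish at the newly removed interior interfaces $\mathcal{I}_{\mathrm{in}}^{(l-1)}\setminus\mathcal{I}_{\mathrm{in}}^{(l)}$, that is, the interfaces lying strictly inside the coarse cells of level $l$.

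\textbf{Key step 1: local well-posedness.} Fix a coarse cell $C\in\mathcal{C}^{(l)}$ and look at the restriction of $\mathcal{F}_{\delta}^{(l-1)}$ to $C$. I would show that a function in this restriction is uniquely determined by two pieces of data: its projected boundary traces on $\partial C$, which are exactly the components of $T^{(l)}$ belonging to $C$, and its projected jumps across the level-$(l-1)$ interfaces interior to $C$. This is a local version of the unique solvability of the ATFPS system (\ref{eq:sec3:31})--(\ref{eq:sec3:32}) posed on the subdomain $C$, with Dirichlet-type data on $\partial C$ and prescribed jumps inside. The number of unknowns equals the number of conditions, because each basis index in $\mathcal{V}_{\delta,C,\mathfrak{i}}$ is matched to one projected condition at $\mathfrak{i}$. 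Solvability of the steady ATFPS/TFPS scheme is taken from \cite{han2014two,AdaptiveTFPS}, applied on the subdomain. This is the main obstacle, because one has to check that the counting is exact at every level, using the merged index sets $\mathcal{V}_{\delta,C,\mathfrak{i}}$, and that the local system is nonsingular. For nonsingularity I would first try a counting argument combined with injectivity: if both data sets vanish, then $f|_C=0$, and I would justify this by the discrete maximum principle or uniqueness used for TFPS.

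\textbf{Key step 2: trivial intersection.} Take $f\in\mathcal{F}_{\delta}^{(l)}\cap\mathcal{G}_{\delta}^{(l)}$. In every coarse cell $C$ both its interior projected jumps and its projected boundary traces vanish, so by Step 1 we get $f|_C=0$ for all $C$, hence $f=0$.

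\textbf{Key step 3: spanning.} Given $f\in\mathcal{F}_{\delta}^{(l-1)}$, build $f_1$ cellwise on each coarse cell $C$ by Step 1, taking the same traces $T^{(l)}f$ on $\partial C$ and zero interior projected jumps. Then $f_1\in\mathcal{F}_{\delta}^{(l)}$, since its interior jumps vanish, and it is still in $\mathcal{F}_{\delta}^{(l-1)}$, since the cellwise construction uses the same basis functions. Next, $g=f-f_1$ satisfies $T^{(l)}g=0$, so $g\in\mathcal{G}_{\delta}^{(l)}$. Therefore $f=f_1+g$, and together with Step 2 this gives $\mathcal{F}_{\delta}^{(l-1)}=\mathcal{F}_{\delta}^{(l)}\oplus\mathcal{G}_{\delta}^{(l)}$.

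As a consistency check, a dimension count should give $\dim\mathcal{F}_{\delta}^{(l)}=\operatorname{rank}T^{(l)}$ and $\dim\mathcal{G}_{\delta}^{(l)}=\sum_{\mathfrak{i}\in\mathcal{I}_{\mathrm{in}}^{(l-1)}\setminus\mathcal{I}_{\mathrm{in}}^{(l)}}|\mathcal{V}_{\delta,\mathfrak{i}}|$, and these two should add up to $\dim\mathcal{F}_{\delta}^{(l-1)}$.
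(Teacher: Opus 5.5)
Your proof is correct and rests on the same nondegeneracy input as the paper, but it closes the argument differently.

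\textbf{How the paper argues.} In Appendix~\ref{sec:proof_nested_solution_space} the paper first obtains only the sum $\mathcal{F}_{\delta}^{(l-1)}=\mathcal{F}_{\delta}^{(l)}+\mathcal{G}_{\delta}^{(l)}$. It does this by postulating a unique function with the projected jumps of $f$ on $\mathcal{I}_{\mathrm{in}}^{(l-1)}\setminus\mathcal{I}_{\mathrm{in}}^{(l)}$ and vanishing coarse traces. It then gets directness from the dimension count \eqref{eq:sec3:4}--\eqref{eq:sec3:5}: $|\mathcal{F}_{\delta}^{(l)}|=\sum_{\mathfrak{i}\in\mathcal{I}^{(l)}}|\mathcal{V}_{\delta,\mathfrak{i}}|$ and $|\mathcal{G}_{\delta}^{(l)}|=|\mathcal{F}_{\delta}^{(l-1)}|-|\mathcal{F}_{\delta}^{(l)}|$. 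This count is exactly what you relegate to a consistency check.

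\textbf{How you argue.} You get $\mathcal{F}_{\delta}^{(l)}\cap\mathcal{G}_{\delta}^{(l)}=\{0\}$ from the uniqueness half of your Step 1, and the spanning from its existence half. The gain is transparency. The paper's count tacitly needs the jump functionals on $\mathcal{F}_{\delta}$ and the coarse-trace functionals on $\mathcal{F}_{\delta}^{(l-1)}$ to be linearly independent. You instead isolate one hypothesis, namely nonsingularity of a single square system per coarse cell, and derive both halves from it. That system is the same matrix the paper inverts in \eqref{eq:sec3:13}.

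Your assignment of the two pieces is also the consistent one:
\begin{itemize}
\item the component with the coarse traces of $f$ and zero interior jumps lies in $\mathcal{F}_{\delta}^{(l)}$;
\item the component with the jumps of $f$ and zero coarse traces lies in $\mathcal{G}_{\delta}^{(l)}$.
\end{itemize}
In the appendix the names $f_1,f_2$ are attached to the opposite spaces, which is harmless for the sum.

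\textbf{One caveat on Step 1.} Do not expect to import it from \cite{han2014two,AdaptiveTFPS}. On $\partial C$ you prescribe the components $\mathcal{P}_{\delta,\mathfrak{i}}^{k}$, $k\in\mathcal{V}_{\delta,C,\mathfrak{i}}$, of one-sided traces in the mode basis $\chi_{\mathfrak{i}}^{(k)}$, not inflow values. So this local problem is not the scheme \eqref{eq:sec3:31}--\eqref{eq:sec3:32} posed on a subdomain, and solvability or maximum-principle results for that scheme do not transfer verbatim.

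The paper does not prove this nonsingularity either. It simply asserts uniqueness of its $f_1$, and reports only numerical evidence that $B_{\delta}^{(l)}$ is a bijection. State Step 1 as the standing nondegeneracy assumption; it is precisely what is needed for the bases \eqref{eq:sec3:2}--\eqref{eq:sec3:3} to be well defined. Your argument is then complete at the same level of rigor as the paper's.
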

The proof is shown in Appendix \ref{sec:proof_nested_solution_space}.

Using Lemma \ref{lemma:FG}, we obtain the multilevel decomposition of the fundamental solution space $\mathcal{F}_{\delta}$ as follows:
\begin{equation}\label{eq:sec4:23}
    \mathcal{F}_{\delta}=\mathcal{F}_{\delta}^{(0)}=\mathcal{G}_{\delta}^{(1)}\oplus\mathcal{F}_{\delta}^{(1)}=\mathcal{G}_{\delta}^{(1)}\oplus\mathcal{G}_{\delta}^{(2)}\oplus\mathcal{F}_{\delta}^{(2)}=\dots = (\oplus_{l=1}^{L}\mathcal{G}_{\delta}^{(l)})\oplus\mathcal{F}_{\delta}^{(L)}.
\end{equation}

\subsubsection{Basis functions of $\mathcal{F}_{\delta}^{(l)} (0\leq l\leq L)$ and $\mathcal{G}_{\delta}^{(l)} (1\leq l\leq L)$}
Based on the definitions of $\mathcal{F}_{\delta}^{(l)}$ and $\mathcal{G}_{\delta}^{(l)}$ given in \eqref{eq:sec4:10} and \eqref{eq:sec4:11}, we now define their respective basis functions.

For $0 \leq l \leq L$, the basis functions of $\mathcal{F}_{\delta}^{(l)}$ are constructed as follows. For any cell $C_0 \in \mathcal{C}^{(l)}$, interface $\mathfrak{i}_0 \in \mathcal{I}^{(l)} \cap C_0$, and index $k_0 \in \mathcal{V}_{\delta, C_0, \mathfrak{i}_0}$, the function $\phi_{C_0, \mathfrak{i}_0}^{(l), k_0}$ is defined by
\begin{equation}\label{eq:sec3:2}
    \begin{aligned}
        &\phi_{C_{0},\mathfrak{i}_{0}}^{(l),k_{0}} \in\mathcal{F}_{\delta}^{(l)}\\
        s.t.\quad & \mathcal{P}_{\delta,\mathfrak{i}_{0}}^{k_{0}}(\phi_{C_{0},\mathfrak{i}_{0}}^{(l),k_{0}}(\bar{\mathbf{x}}_{\mathfrak{i}_{0}})|_{C_{0}})=1;\\
        &\mathcal{P}_{\delta,\mathfrak{i}}^{k}(\phi_{C_{0},\mathfrak{i}_{0}}^{(l),k_{0}}(\bar{\mathbf{x}}_{\mathfrak{i}})|_{C})=0,\quad \forall\ C\in\mathcal{C}^{(l)},\ \mathfrak{i}\in \mathcal{I}^{(l)}\cap C, \  k\in\mathcal{V}_{\delta,C,\mathfrak{i}},\ (C,\mathfrak{i},k)\ne (C_{0},\mathfrak{i}_{0},k_{0}).\\
    \end{aligned}
\end{equation}

For $1 \leq l \leq L$, the basis functions of $\mathcal{G}_{\delta}^{(l)}$ are defined as follows. For any $C_0 \in \mathcal{C}^{(l)}$, interface $\mathfrak{i}_0 \in (\mathcal{I}^{(l-1)} \setminus \mathcal{I}^{(l)}) \cap C_0$, and index $k_0 \in \mathcal{V}_{\delta, C_0, \mathfrak{i}_0}$, the function $\phi_{C_0, \mathfrak{i}_0}^{(l), k_0}$ satisfies
\begin{equation}\label{eq:sec3:3}
    \begin{aligned}
        &\phi_{C_{0},\mathfrak{i}_{0}}^{(l),k_{0}} \in\mathcal{G}_{\delta}^{(l)}\\
        s.t.\quad & \mathcal{P}_{\delta,\mathfrak{i}_{0}}^{k_{0}}[\phi_{C_{0},\mathfrak{i}_{0}}^{(l),k_{0}}(\mathbf{x}_{\mathfrak{i},\mathrm{mid}})]=1; ;\\
         \quad &\mathcal{P}_{\delta,\mathfrak{i}}^{k}[\phi_{C_{0},\mathfrak{i}_{0}}^{(l),k_{0}}(\mathbf{x}_{\mathfrak{i},\mathrm{mid}})]=0\quad \forall,\  \mathfrak{i}\in \mathcal{I}^{(l-1)}\setminus\mathcal{I}^{(l)},\  k\in\mathcal{V}_{\delta,\mathfrak{i}},\ (\mathfrak{i},k)\ne(\mathfrak{i}_{0},k_{0}).
    \end{aligned}
\end{equation}

These basis functions are clearly linearly independent. Moreover, their total number matches the dimensions of $\mathcal{F}_{\delta}^{(l)}$ and $\mathcal{G}_{\delta}^{(l)}$, as computed in equations \eqref{eq:sec3:4} and \eqref{eq:sec3:5}. Consequently, they form complete bases for $\mathcal{F}_{\delta}^{(l)}$ and $\mathcal{G}_{\delta}^{(l)}$, respectively, yielding the following characterizations:
\begin{equation}\label{eq:sec3:6}
    \begin{aligned}
        \mathcal{F}_{\delta}^{(l)} = \{\sum_{C\in\mathcal{C}^{(l)}}\sum_{\mathfrak{i}\in\mathcal{I}^{(l)}\cap C}\sum_{k\in\mathcal{V}_{\delta,C,\mathfrak{i}}}\alpha_{C,\mathfrak{i}}^{(l),k}\phi_{C,\mathfrak{i}}^{(l),k}|\alpha_{C,\mathfrak{i}}^{(l),k}\in\mathbb{R}\},\quad 0\leq l\leq L;\\
        \mathcal{G}_{\delta}^{(l)} = \{\sum_{C\in\mathcal{C}^{(l)}}\sum_{\mathfrak{i}\in(\mathcal{I}^{(l-1)}\setminus\mathcal{I}^{(l)})\cap C}\sum_{k\in\mathcal{V}_{\delta,\mathfrak{i}}}\alpha_{C,\mathfrak{i}}^{(l),k}\phi_{C,\mathfrak{i}}^{(l),k}|\alpha_{C,\mathfrak{i}}^{(l),k}\in\mathbb{R}\},\quad 1\leq l\leq L;
    \end{aligned}
\end{equation}

\subsubsection{Properties of the basis functions}
As a consequence of Definitions \eqref{eq:sec3:2} and \eqref{eq:sec3:3}, we obtain the following Lemma \ref{lemma:localization}.

\begin{lemma}\label{lemma:localization}
The basis functions of $\mathcal{F}_{\delta}^{(l)}$ for $0\leq l\leq L$ and $\mathcal{G}_{\delta}^{(l)}$ for $1\leq l\leq L$, defined in \eqref{eq:sec3:2}, and \eqref{eq:sec3:3} with the form $\phi_{C_{0},\mathfrak{i}_{0}}^{(l),k_{0}}$, are supported within the cell $C_{0}$.
\end{lemma}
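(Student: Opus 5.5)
Our approach is a uniqueness argument. We build a candidate function that is supported in $C_0$ and satisfies all the defining conditions in \eqref{eq:sec3:2} (resp. \eqref{eq:sec3:3}). Since those conditions determine the basis function uniquely (by the dimension count behind \eqref{eq:sec3:6}), the candidate must coincide with $\phi_{C_0,\mathfrak{i}_0}^{(l),k_0}$.

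\textbf{The case $\mathcal{F}_{\delta}^{(l)}$.} We argue by induction on $l$.

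For $l=0$, the basis function is a combination of the local fundamental solutions $\phi_{C_0}^{k}$, $k\in\mathcal{V}_{\delta,C_0}$, restricted to the fine cell $C_0$ and extended by zero. The $|\mathcal{V}_{\delta,C_0}|$ conditions in \eqref{eq:sec3:2} involving $C_0$ form a square local system. We assume it is invertible, as in ATFPS, where the projected one-sided interface values in a single cell determine the local coefficients. The conditions involving the other cells $C\neq C_0$ are then satisfied by the zero function.

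For $l\ge1$, let $C_0\in\mathcal{C}^{(l)}$ be the union of four children in $\mathcal{C}^{(l-1)}$. We take the candidate to be the function in $\mathcal{F}_{\delta}^{(l-1)}$ spanned only by the level-$(l-1)$ basis functions attached to these children. By the induction hypothesis, this span consists of functions supported in $C_0$. We must impose:
\begin{itemize}
\item[(a)] zero projected jump across the interior interfaces of $C_0$ lying in $\mathcal{I}^{(l-1)}\setminus\mathcal{I}^{(l)}$, so that the candidate lies in $\mathcal{F}_{\delta}^{(l)}$;
\item[(b)] the one-sided projected values on the level-$l$ interfaces on $\partial C_0$ equal to $\delta_{(\mathfrak{i},k),(\mathfrak{i}_0,k_0)}$.
\end{itemize}

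\textbf{Counting and the one-sided trace.} The number of unknowns is $\sum_{\text{children}}\sum_{\mathfrak{i}}|\mathcal{V}_{\delta,C',\mathfrak{i}}|$. The number of equations is $\sum_{\mathfrak{i}\text{ interior}}|\mathcal{V}_{\delta,\mathfrak{i}}|$ plus the number of boundary-of-$C_0$ conditions. These agree, since $\mathcal{V}_{\delta,\mathfrak{i}}$ is the union of the two one-sided sets and $\mathcal{V}_{\delta,C,\mathfrak{i}}$ is defined as a union over children. Zero extension outside $C_0$ makes every condition on other cells $C\in\mathcal{C}^{(l)}$ hold trivially. For interfaces on $\partial C_0$ that are interior to $\Omega$, the jump conditions from earlier levels must be checked. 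These are exactly the one-sided conditions encoded through the restriction $|_{C}$ in \eqref{eq:sec3:2}, because $\mathcal{F}_{\delta}^{(l)}$ only constrains jumps on interfaces removed at coarser levels.

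\textbf{The case $\mathcal{G}_{\delta}^{(l)}$.} Here $C_0\in\mathcal{C}^{(l)}$ and $\mathfrak{i}_0$ is interior to $C_0$. The candidate is again sought in the span of the children's level-$(l-1)$ basis functions. We impose jump value $1$ at $(\mathfrak{i}_0,k_0)$, jump $0$ on the other internal interfaces of $C_0$, and zero one-sided values on $\partial C_0$, which is the membership condition in \eqref{eq:sec4:11}. The remaining conditions (jumps on internal interfaces of other coarse cells) hold trivially for the zero extension. The counting argument is the same as above.

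\textbf{Main obstacle.} The hard step is proving that these local square systems are nonsingular, since zero extension only helps once a local solution exists. We would first try to derive this from the global well-posedness of the ATFPS system. If a local homogeneous solution supported in $C_0$ existed, extending it by zero would give a nonzero element of $\mathcal{F}_{\delta}$ with all projected boundary and jump data equal to zero. Such an element is excluded by the unique solvability of \eqref{eq:sec3:31}--\eqref{eq:sec3:32}, applied to the subdomain $C_0$ treated as its own domain with one-sided boundary data.
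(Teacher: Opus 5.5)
\textbf{Verdict.} Your skeleton (build a candidate supported in $C_0$, then invoke uniqueness) is sound and reaches the lemma by the reverse of the paper's route. However, the argument you give for the step you call the main obstacle fails as stated. It can be repaired with the uniqueness you already use.

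\textbf{Why the sketched argument fails.} Suppose $g\neq 0$ solves the homogeneous local system in $C_0$. Its zero extension does not have vanishing projected jumps on the part of $\partial C_0$ inside $\Omega$. Your conditions only give $\mathcal{P}_{\delta,\mathfrak{i}}^{k}(g(\bar{\mathbf{x}}_{\mathfrak{i}})|_{C_0})=0$ for $k\in\mathcal{V}_{\delta,C_0,\mathfrak{i}}$. The jump condition \eqref{eq:sec3:32}, by contrast, involves every $k\in\mathcal{V}_{\delta,\mathfrak{i}}$, including the neighbour's indices $\mathcal{V}_{\delta,C',\mathfrak{i}}$, on which $g$ is unconstrained.

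Treating $C_0$ as its own domain does not help either. Condition \eqref{eq:sec3:31} prescribes projected \emph{inflow} data. Those are different linear functionals from the one-sided modal values $\mathcal{P}_{\delta,\mathfrak{i}}^{k}(\cdot|_{C_0})$ that \eqref{eq:sec3:2} imposes at interior interfaces, which are built from the two-sided basis $\chi_{\mathfrak{i}}^{(k)}$. So well-posedness of the ATFPS system says nothing about your local block.

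\textbf{The repair.} Run the same zero-extension argument against the correct global system: the defining system \eqref{eq:sec3:2} (equivalently \eqref{eq:sec3:1}) or \eqref{eq:sec3:3} itself. You already invoke its unique solvability in your first paragraph. The zero extension of $g$ lies in $\mathcal{F}_{\delta}^{(l-1)}$ and satisfies \emph{all} the homogeneous conditions there. Those on $\partial C_0$ and inside $C_0$ hold by assumption, and those attached to other coarse cells hold trivially. This contradicts uniqueness.

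Put differently, each constraint involves only the children of a single coarse cell. Hence the square, uniquely solvable global system is block diagonal, and its nonsingularity forces every diagonal block to be square and nonsingular. This gives existence of your candidate, and it also removes the separate invertibility assumption you make at $l=0$.

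\textbf{Comparison with the paper.} With this repair your proof is complete, under the same well-posedness of \eqref{eq:sec3:2}--\eqref{eq:sec3:3} that the paper presupposes. The paper expands $\phi_{C_0,\mathfrak{i}_0}^{(l),k_0}$ in the level-$(l-1)$ basis. It then observes that the coefficients belonging to any coarse cell $C\neq C_0$ satisfy a homogeneous local system and therefore vanish. It thus uses only injectivity of the off-$C_0$ blocks (tacitly) and never constructs anything. You instead solve the $C_0$ block and appeal to global uniqueness.

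Both routes rest on the same block-diagonal structure and the same nonsingularity. The paper's route is shorter. Yours has the advantage of directly justifying the cell-local formulation \eqref{eq:sec3:13} used in the cost analysis, because it exhibits the basis function as the solution of the $C_0$-local system alone.
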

The proof of Lemma \ref{lemma:localization} is shown in Appendix \ref{sec:proof_localization}.

Based on the expressions of $\mathcal{F}_{\delta}^{(l)}$ and $\mathcal{G}_{\delta}^{(l)}$ in \eqref{eq:sec3:6}, together with Lemma  \ref{lemma:FG} and \ref{lemma:localization}, we obtain the following property. The proof is straightforward and is therefore omitted.

\begin{lemma}\label{lemma:expression}
   For any $g\in\mathcal{G}_{\delta}^{(l)}$ $(1\leq l\leq L)$, it holds that:
   \begin{equation}\label{eq:sec3:8}
       g = \sum_{C\in\mathcal{C}^{(l)}}\sum_{\mathfrak{i}\in(\mathcal{I}^{(l-1)}\setminus\mathcal{I}^{(l)})\cap C}\sum_{k\in\mathcal{V}_{\delta,\mathfrak{i}}}\mathcal{P}_{\delta,\mathfrak{i}}^{k}\big([g(\bar{\mathbf{x}}_{\mathfrak{i}})]\big)\phi_{C,\mathfrak{i}}^{(l),k}=\sum_{\mathfrak{i}\in\mathcal{I}^{(l-1)}\setminus\mathcal{I}^{(l)}}\sum_{k\in\mathcal{V}_{\delta,\mathfrak{i}}}\mathcal{P}_{\delta,\mathfrak{i}}^{k}\big([g(\bar{\mathbf{x}}_{\mathfrak{i}})]\big)\phi_{C,\mathfrak{i}}^{(l),k}.
   \end{equation}
   Similarly, for any $f\in\mathcal{F}_{\delta}^{(l)}$ $(0\leq l\leq L)$, the function can be expressed as:
   \begin{equation}\label{eq:sec3:7}
       f = \sum_{C\in\mathcal{C}^{(l)}}\sum_{\mathfrak{i}\in\mathcal{I}^{(l)}\cap C}\sum_{k\in\mathcal{V}_{\delta,C,\mathfrak{i}}}\mathcal{P}_{\delta,\mathfrak{i}}^{k}\big(f(\bar{\mathbf{x}}_{\mathfrak{i}})|_{C}\big)\phi_{C,\mathfrak{i}}^{(l),k}.
   \end{equation}
\end{lemma}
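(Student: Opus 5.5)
The plan is to write an arbitrary element in the basis of \eqref{eq:sec3:6}, then recover each coefficient by applying the relevant functional and using the duality conditions \eqref{eq:sec3:2} and \eqref{eq:sec3:3}.

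\textbf{The case $f\in\mathcal{F}_{\delta}^{(l)}$.} By \eqref{eq:sec3:6}, the family $\{\phi_{C,\mathfrak{i}}^{(l),k}\}$ is a basis, so we can write
\[
f=\sum_{C}\sum_{\mathfrak{i}}\sum_{k}\alpha_{C,\mathfrak{i}}^{(l),k}\phi_{C,\mathfrak{i}}^{(l),k}
\]
with unique coefficients. Fix a target triple $(C_0,\mathfrak{i}_0,k_0)$ and apply the linear functional $f\mapsto\mathcal{P}_{\delta,\mathfrak{i}_0}^{k_0}\big(f(\bar{\mathbf{x}}_{\mathfrak{i}_0})|_{C_0}\big)$ to both sides. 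Definition \eqref{eq:sec3:2} says exactly that this functional takes the value $1$ on $\phi_{C_0,\mathfrak{i}_0}^{(l),k_0}$ and $0$ on every other basis function. Hence $\alpha_{C_0,\mathfrak{i}_0}^{(l),k_0}=\mathcal{P}_{\delta,\mathfrak{i}_0}^{k_0}\big(f(\bar{\mathbf{x}}_{\mathfrak{i}_0})|_{C_0}\big)$, which is \eqref{eq:sec3:7}. Lemma \ref{lemma:localization} is used here only to make the one-sided trace $f(\bar{\mathbf{x}}_{\mathfrak{i}})|_{C}$ meaningful: each basis function lives in its own cell, so the trace from $C$ sees only the basis functions attached to $C$.

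\textbf{The case $g\in\mathcal{G}_{\delta}^{(l)}$.} Here I use the jump functionals $\mathcal{P}_{\delta,\mathfrak{i}}^{k}[\,\cdot\,(\bar{\mathbf{x}}_{\mathfrak{i}})]$ for $\mathfrak{i}\in\mathcal{I}^{(l-1)}\setminus\mathcal{I}^{(l)}$ and $k\in\mathcal{V}_{\delta,\mathfrak{i}}$. Definition \eqref{eq:sec3:3} states that $\phi_{C_0,\mathfrak{i}_0}^{(l),k_0}$ has unit projected jump at $(\mathfrak{i}_0,k_0)$ and zero projected jump at every other new interface and mode. The coefficient extraction is then identical to the first case and gives the first equality in \eqref{eq:sec3:8}.

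Every interface in $\mathcal{I}^{(l-1)}\setminus\mathcal{I}^{(l)}$ lies in the interior of a unique coarse cell $C\in\mathcal{C}^{(l)}$. So the double sum over $C$ and $\mathfrak{i}\in(\mathcal{I}^{(l-1)}\setminus\mathcal{I}^{(l)})\cap C$ collapses to a single sum over $\mathfrak{i}$, with $C=C(\mathfrak{i})$ determined by $\mathfrak{i}$. This gives the second equality.

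\textbf{Main obstacle.} The one point needing care is that the indexing in \eqref{eq:sec3:3} is consistent. The functions $\phi^{(l),k}_{C,\mathfrak{i}}$ are labeled by $(C,\mathfrak{i},k)$ with $k\in\mathcal{V}_{\delta,C,\mathfrak{i}}$ in the definition, but the sum in \eqref{eq:sec3:8} runs over $k\in\mathcal{V}_{\delta,\mathfrak{i}}$. Because the interface is interior to the single cell $C(\mathfrak{i})$, both fine cells adjacent to $\mathfrak{i}$ lie in $C$. Therefore $\mathcal{V}_{\delta,C,\mathfrak{i}}=\mathcal{V}_{\delta,\mathfrak{i}}$, the two index sets agree, and the dual-basis argument applies without change.
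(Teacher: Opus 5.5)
Your proposal is correct and matches the argument the paper has in mind when it calls the proof straightforward and omits it: you expand in the bases of \eqref{eq:sec3:6} and read off each coefficient through the biorthogonality conditions \eqref{eq:sec3:2} and \eqref{eq:sec3:3}, and your bookkeeping ($\mathcal{V}_{\delta,C,\mathfrak{i}}=\mathcal{V}_{\delta,\mathfrak{i}}$ for $\mathfrak{i}$ interior to $C$, and $C=C(\mathfrak{i})$ in the second sum of \eqref{eq:sec3:8}) supplies the index details. One small correction to your side remark: Lemma~\ref{lemma:localization} is not needed, since one-sided traces are defined for any piecewise function and \eqref{eq:sec3:2} already gives the functional values on basis functions attached to other cells.
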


\subsection{Multilevel decomposition of the solution operator}
\label{subsec:multilevel_decomposition_operator}
For notational convenience, we introduce the following vectors associated with any function $f \in \mathcal{F}_{\delta}$. First, for any subset of cells $\mathcal{C}_{\mathrm{sub}} \subseteq \mathcal{C}^{(l)}$ for some $l$, define the projection vector
\[
\mathbf{f}_{\mathcal{C}_{\mathrm{sub}}} = \Big(\mathcal{P}_{\delta,\mathfrak{i}}^{k}\big(f(\bar{\mathbf{x}}_{\mathfrak{i}})|_{C}\big)\Big)_{C\in\mathcal{C}_{\mathrm{sub}},\mathfrak{i}\in\mathcal{I}\cap \partial C,k\in\mathcal{V}_{\delta,C,\mathfrak{i}}}.
\]
Its boundary and interior components are given by
\[\mathbf{f}_{\mathcal{C}_{\mathrm{sub}},\mathrm{b}} = \Big(\mathcal{P}_{\delta,\mathfrak{i}}^{k}\big(f(\bar{\mathbf{x}}_{\mathfrak{i}})|_{C}\big)\Big)_{C\in\mathcal{C}_{\mathrm{sub}},\mathfrak{i}\in\mathcal{I}_{\mathrm{b}}\cap \partial C,k\in\mathcal{V}_{\delta,C,\mathfrak{i}}},\quad \mathbf{f}_{\mathcal{C}_{\mathrm{sub}},\mathrm{in}} = \Big(\mathcal{P}_{\delta,\mathfrak{i}}^{k}\big(f(\bar{\mathbf{x}}_{\mathfrak{i}})|_{C}\big)\Big)_{C\in\mathcal{C}_{\mathrm{sub}},\mathfrak{i}\in\mathcal{I}_{\mathrm{in}}\cap \partial C,k\in\mathcal{V}_{\delta,C,\mathfrak{i}}}
\]

Moreover, for any subset $\mathcal{I}_{\mathrm{sub}} \subseteq \mathcal{I}$, we define the jump vector 
\[[\mathbf{f}]_{\mathcal{I}_{\mathrm{sub}}} = \Big(\mathcal{P}_{\delta,\mathfrak{i}}^{k}\big([f(\bar{\mathbf{x}}_{\mathfrak{i}})]\big)\Big)_{\mathfrak{i}\in\mathcal{I}_{\mathrm{sub}},k\in\mathcal{V}_{\delta,\mathfrak{i}}}.\]

These quantities enable the definition of the discrete operators that constitute the key components of the multilevel decomposition:
\begin{itemize}
\item $B_{\delta}^{(l)}$: for any $f\in\mathcal{F}_{\delta}^{(l)}$, the operator $B_{\delta}^{(l)}$ maps $\mathbf{f}_{\mathcal{C}^{(l)}}$ to $(\mathbf{f}_{\mathcal{C}^{(l)},\mathrm{b}}^{T}, [\mathbf{f}]_{\mathcal{I}_{\mathrm{in}}^{(l)}}^{T})^{T}$. Numerical results demonstrate that this operator is a bijection.
\item $P_{\delta}^{(l)}$: for any $f\in\mathcal{F}_{\delta}^{(l+1)}$, $P_{\delta}^{(l)}$ maps $\mathbf{f}_{\mathcal{C}^{(l+1)}}$ to $\mathbf{f}_{\mathcal{C}^{(l)}}$.
\item $Q_{\delta}^{(l)}$: for any $f\in\mathcal{G}_{\delta}^{(l+1)}$, $Q_{\delta}^{(l)}$ maps $[\mathbf{f}]_{\mathcal{I}^{(l)}\setminus\mathcal{I}^{(l+1)}}$ to $\mathbf{f}_{\mathcal{C}^{(l)}}$.
\item $R_{\delta}^{(l)}$ and $\check{R}_{\delta}^{(l)}$: for any $f\in\mathcal{F}_{\delta}^{(l)}$, these operators map $(\mathbf{f}_{\mathcal{C}^{(l)},\mathrm{b}}^{T}, [\mathbf{f}]_{\mathcal{I}_{\mathrm{in}}^{(l)}}^{T})^{T}$ to $(\mathbf{f}_{\mathcal{C}^{(l+1)},\mathrm{b}}^{T}, [\mathbf{f}]_{\mathcal{I}_{\mathrm{in}}^{(l+1)}}^{T})^{T}$ and $[\mathbf{f}]_{\mathcal{I}_{\mathrm{in}}^{(l)}\setminus\mathcal{I}_{\mathrm{in}}^{(l+1)}}$, respectively.
\end{itemize}
Then we have the following lemma.
\begin{lemma}\label{lemma:twoleveldecomposition}
$$B_{\delta}^{(l),-1} = P_{\delta}^{(l)}\mathcal{B}_{\delta}^{(l+1),-1}(R_{\delta}^{(l)} - B_{\delta}^{(l+1)}Q_{\delta}^{(l)}\check{R}_{\delta}^{(l)}) + Q_{\delta}^{(l)}\check{R}_{\delta}^{(l)}.$$
\end{lemma}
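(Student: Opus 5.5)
We verify the identity by applying both sides to an arbitrary data vector and using the splitting of Lemma \ref{lemma:FG}. Fix $f\in\mathcal{F}_{\delta}^{(l)}$ and set $\mathbf{d}=(\mathbf{f}_{\mathcal{C}^{(l)},\mathrm{b}}^{T},[\mathbf{f}]_{\mathcal{I}_{\mathrm{in}}^{(l)}}^{T})^{T}=B_{\delta}^{(l)}\mathbf{f}_{\mathcal{C}^{(l)}}$. Since $B_{\delta}^{(l)}$ is a bijection, every admissible data vector arises this way. It therefore suffices to show that the right-hand side maps $\mathbf{d}$ to $\mathbf{f}_{\mathcal{C}^{(l)}}$.

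By Lemma \ref{lemma:FG} (with index $l+1$), write $f=f^{(l+1)}+g$ with $f^{(l+1)}\in\mathcal{F}_{\delta}^{(l+1)}$ and $g\in\mathcal{G}_{\delta}^{(l+1)}$. The first step is to identify $g$ from the data. Functions in $\mathcal{F}_{\delta}^{(l+1)}$ have vanishing projected jumps on $\mathcal{I}_{\mathrm{in}}^{(l)}\setminus\mathcal{I}_{\mathrm{in}}^{(l+1)}$ by \eqref{eq:sec4:10}. Hence
\[
[\mathbf{g}]_{\mathcal{I}^{(l)}\setminus\mathcal{I}^{(l+1)}}=[\mathbf{f}]_{\mathcal{I}_{\mathrm{in}}^{(l)}\setminus\mathcal{I}_{\mathrm{in}}^{(l+1)}}=\check{R}_{\delta}^{(l)}\mathbf{d}.
\]
Here we also need that the removed interfaces are interior ones. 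This holds because the boundary interfaces are nested and the domain boundary is common to all levels. Lemma \ref{lemma:expression}, equation \eqref{eq:sec3:8}, shows that $g$ is determined by these jumps, and by definition of $Q_{\delta}^{(l)}$ we get $\mathbf{g}_{\mathcal{C}^{(l)}}=Q_{\delta}^{(l)}\check{R}_{\delta}^{(l)}\mathbf{d}$. This gives the last term of the formula.

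The second step is to recover $f^{(l+1)}$. Its coarse data satisfy
\[
B_{\delta}^{(l+1)}\mathbf{f}^{(l+1)}_{\mathcal{C}^{(l+1)}}
=(\mathbf{f}_{\mathcal{C}^{(l+1)},\mathrm{b}}^{T},[\mathbf{f}]_{\mathcal{I}_{\mathrm{in}}^{(l+1)}}^{T})^{T}-(\mathbf{g}_{\mathcal{C}^{(l+1)},\mathrm{b}}^{T},[\mathbf{g}]_{\mathcal{I}_{\mathrm{in}}^{(l+1)}}^{T})^{T}
\]
by linearity. The first vector is $R_{\delta}^{(l)}\mathbf{d}$.

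The key point is to express the second vector as $B_{\delta}^{(l+1)}Q_{\delta}^{(l)}\check{R}_{\delta}^{(l)}\mathbf{d}$. Here $B_{\delta}^{(l+1)}$ acts on level-$(l+1)$ coarse projection vectors, while $Q_{\delta}^{(l)}$ outputs level-$l$ vectors. So I will read the composition through the restriction that assembles $\mathbf{g}_{\mathcal{C}^{(l+1)}}$ from $\mathbf{g}_{\mathcal{C}^{(l)}}$. This works because projected values of $g$ at level-$(l+1)$ interfaces are entries of its level-$l$ data on fine cells adjacent to those interfaces, using the convention $\mathcal{V}_{\delta,C,\mathfrak{i}}=\bigcup\mathcal{V}_{\delta,C_0,\mathfrak{i}}$. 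Thus the jumps and boundary values of $g$ on $\mathcal{I}^{(l+1)}$ are read off from $\mathbf{g}_{\mathcal{C}^{(l)}}$ in the same way $B_{\delta}^{(l+1)}$ reads them from a coarse vector.

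I expect this bookkeeping to be the main obstacle. It requires checking carefully that the index sets of the level-$l$ and level-$(l+1)$ vectors match under the merged sets $\mathcal{V}_{\delta,C,\mathfrak{i}}$. It also requires applying $B_{\delta}^{(l+1)}$ to $g\notin\mathcal{F}_{\delta}^{(l+1)}$, which is legitimate only because $B$'s action is purely a linear readout of interface values.

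Given this, invertibility of $B_{\delta}^{(l+1)}$ yields
\[
\mathbf{f}^{(l+1)}_{\mathcal{C}^{(l+1)}}=B_{\delta}^{(l+1),-1}\bigl(R_{\delta}^{(l)}-B_{\delta}^{(l+1)}Q_{\delta}^{(l)}\check{R}_{\delta}^{(l)}\bigr)\mathbf{d}.
\]
Applying $P_{\delta}^{(l)}$ converts this to $\mathbf{f}^{(l+1)}_{\mathcal{C}^{(l)}}$. Finally, by linearity of $f\mapsto\mathbf{f}_{\mathcal{C}^{(l)}}$,
\[
\mathbf{f}_{\mathcal{C}^{(l)}}=\mathbf{f}^{(l+1)}_{\mathcal{C}^{(l)}}+\mathbf{g}_{\mathcal{C}^{(l)}},
\]
which is exactly the claimed formula applied to $\mathbf{d}$. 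Since $\mathbf{d}$ was arbitrary, the operator identity follows.
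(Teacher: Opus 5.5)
Your route matches the paper's. You split $f=f^{(l+1)}+g$ by Lemma~\ref{lemma:FG}, obtain $\mathbf{g}_{\mathcal{C}^{(l)}}=Q_{\delta}^{(l)}\check{R}_{\delta}^{(l)}\mathbf{d}$ from the removed jumps, subtract $g$'s coarse data, invert $B_{\delta}^{(l+1)}$, and prolong with $P_{\delta}^{(l)}$. The first step is fine. The gap is at the step you yourself flagged as the main obstacle, and the resolution you propose there does not work.

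\textbf{Why the proposed reading fails.} You read $B_{\delta}^{(l+1)}Q_{\delta}^{(l)}$ as $B_{\delta}^{(l+1)}$ applied after the restriction $\mathbf{g}_{\mathcal{C}^{(l)}}\mapsto\mathbf{g}_{\mathcal{C}^{(l+1)}}$. The defining condition of $\mathcal{G}_{\delta}^{(l+1)}$ in \eqref{eq:sec4:11} is that $\mathcal{P}_{\delta,\mathfrak{i}}^{k}(g(\bar{\mathbf{x}}_{\mathfrak{i}})|_{C})=0$ for all $C\in\mathcal{C}^{(l+1)}$, $\mathfrak{i}\in\mathcal{I}^{(l+1)}\cap C$, $k\in\mathcal{V}_{\delta,C,\mathfrak{i}}$. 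That is exactly $\mathbf{g}_{\mathcal{C}^{(l+1)}}=\mathbf{0}$. So your composite is the zero operator, and your argument would prove $B_{\delta}^{(l),-1}=P_{\delta}^{(l)}B_{\delta}^{(l+1),-1}R_{\delta}^{(l)}+Q_{\delta}^{(l)}\check{R}_{\delta}^{(l)}$, which is false in general.

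The underlying error is the claim that $B$ is a pure linear readout of interface values. Take an interior interface $\mathfrak{i}$ between cells $C_1$ and $C_2$. The projected jump there has a component for every $k\in\mathcal{V}_{\delta,\mathfrak{i}}=\mathcal{V}_{\delta,C_1,\mathfrak{i}}\cup\mathcal{V}_{\delta,C_2,\mathfrak{i}}$. It therefore needs cross terms such as $\mathcal{P}_{\delta,\mathfrak{i}}^{k}(f(\bar{\mathbf{x}}_{\mathfrak{i}})|_{C_1})$ with $k\in\mathcal{V}_{\delta,C_2,\mathfrak{i}}$, and these are not entries of any data vector. $B_{\delta}^{(l+1)}$ supplies them through the expansion \eqref{eq:sec3:7} in the basis of $\mathcal{F}_{\delta}^{(l+1)}$. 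This is why the storage analysis has to store the jump values of the basis functions. That expansion is valid only on $\mathcal{F}_{\delta}^{(l+1)}$.

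For $g\in\mathcal{G}_{\delta}^{(l+1)}$, each basis function is supported in a single coarse cell $C_0$ (Lemma~\ref{lemma:localization}). At a coarse interface on $\partial C_0$, its projected jump is the projection of its one-sided trace. The components with $k\in\mathcal{V}_{\delta,C_0,\mathfrak{i}}$ vanish, but those with $k$ in the neighbour's set generically do not. Hence the correction you need, $(\mathbf{0}^{T},[\mathbf{g}]_{\mathcal{I}_{\mathrm{in}}^{(l+1)}}^{T})^{T}$, is nonzero, while your operator returns $\mathbf{0}$.

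\textbf{The repair.} Work at the level where $g$ actually lives. Since $g\in\mathcal{G}_{\delta}^{(l+1)}\subset\mathcal{F}_{\delta}^{(l)}$, the vector $B_{\delta}^{(l)}\mathbf{g}_{\mathcal{C}^{(l)}}$ is legitimately $g$'s level-$l$ data. Then $R_{\delta}^{(l)}$, which is defined on all of $\mathcal{F}_{\delta}^{(l)}$ and really is a $0$--$1$ selection, extracts its level-$(l+1)$ data. The subtracted vector is therefore $R_{\delta}^{(l)}B_{\delta}^{(l)}Q_{\delta}^{(l)}\check{R}_{\delta}^{(l)}\mathbf{d}$. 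The rest of your argument then goes through verbatim and gives
\[
B_{\delta}^{(l),-1}=P_{\delta}^{(l)}B_{\delta}^{(l+1),-1}\bigl(R_{\delta}^{(l)}-R_{\delta}^{(l)}B_{\delta}^{(l)}Q_{\delta}^{(l)}\check{R}_{\delta}^{(l)}\bigr)+Q_{\delta}^{(l)}\check{R}_{\delta}^{(l)}.
\]
So the factor written $B_{\delta}^{(l+1)}$ in front of $Q_{\delta}^{(l)}$ in the lemma must be understood as $R_{\delta}^{(l)}B_{\delta}^{(l)}$, not as $B_{\delta}^{(l+1)}$ composed with a restriction. The paper's appendix passes over the same point, writing $B_{\delta}^{(l+1)}\mathbf{f}_{2,\mathcal{C}^{(l)}}$ without comment. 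You located the real difficulty correctly; it is your resolution of it that fails.
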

The proof of Lemma \ref{lemma:twoleveldecomposition} is shown in Appendix \ref{append:twoleveldecomposition}.

Based on Lemma \ref{lemma:twoleveldecomposition}, we can recursively obtain the multilevel decomposition of $B_{\delta}^{-1}$. By definition, this decomposition yields the desired solution map.

\subsection{Computation and storage cost}
\label{subsec:computation_storage_cost}

In the following subsection, we assume for simplicity that the true angular rank is uniformly equal to $r$, i.e., $|\mathcal{V}_{\delta,C}| = 4r$ for all $C \in \mathcal{C}$. Consequently, we have
\[
|\mathcal{V}_{\delta,C,\mathfrak{i}}| = r\,2^{l}, \qquad 
|\mathcal{V}_{\delta,C}| = 4r\,2^{l}, 
\qquad \forall\, C \in \mathcal{C}^{(l)}.
\]
The following Lemmas \ref{lemma:offline_computation_cost}, \ref{lemma:online_computation_cost} and \ref{lemma:storage_cost} demonstrate the advantageous computational and storage costs of the proposed method, particularly in the presence of a low-rank structure $(r < O(M))$.

\begin{lemma}\label{lemma:offline_computation_cost}
     The derivation of an explicit multilevel decomposition of the solution operator $B_{\delta}^{-1}$ incurs a total computational cost of $O(r^{3} I^{3})$. 
\end{lemma}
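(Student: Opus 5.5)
The plan is to add up, level by level, the cost of building the operators $B_{\delta}^{(l)}$, $P_{\delta}^{(l)}$, $Q_{\delta}^{(l)}$, $R_{\delta}^{(l)}$, $\check R_{\delta}^{(l)}$ that enter the recursion of Lemma \ref{lemma:twoleveldecomposition}, and show the sum is dominated by the finest levels with a geometric factor. By Lemma \ref{lemma:localization}, every basis function of $\mathcal{F}_{\delta}^{(l)}$ or $\mathcal{G}_{\delta}^{(l)}$ is supported in a single cell of $\mathcal{C}^{(l)}$. Therefore each of these operators is block-structured with respect to the cells at level $l$ (or at level $l+1$), and its construction reduces to independent local computations on each coarse cell.

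\textbf{Step 1: local cost at level $l$.} Fix $C\in\mathcal{C}^{(l+1)}$. It consists of four children in $\mathcal{C}^{(l)}$, each of which carries $|\mathcal{V}_{\delta,C_0}|=4r2^{l}$ unknowns. To obtain the basis of $\mathcal{F}_{\delta}^{(l+1)}$ restricted to $C$ from the basis of $\mathcal{F}_{\delta}^{(l)}$, I use the representation \eqref{eq:sec3:7} of Lemma \ref{lemma:expression}. Writing a function on $C$ as a combination of the four children's bases gives $16r2^{l}=O(r2^{l})$ coefficients. These are fixed by the interior jump conditions on the four new interfaces in $\mathcal{I}^{(l)}\setminus\mathcal{I}^{(l+1)}$ inside $C$, together with the normalization conditions \eqref{eq:sec3:2} on the outer interfaces of $C$.

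This is one dense linear system of size $O(r2^{l})$ with $O(r2^{l})$ right-hand sides. The same factorization also gives the $\mathcal{G}_{\delta}^{(l+1)}$ basis \eqref{eq:sec3:3}, and hence the local blocks of $P_{\delta}^{(l)}$ and $Q_{\delta}^{(l)}$. The maps $R_{\delta}^{(l)}$ and $\check R_{\delta}^{(l)}$ only reorder and take differences of entries, because the jumps at coarse interfaces are inherited from the fine data. The product $B_{\delta}^{(l+1)}Q_{\delta}^{(l)}\check R_{\delta}^{(l)}$ is also local, since $Q_{\delta}^{(l)}$ is block-diagonal by cell. Each cell therefore costs $O((r2^{l})^{3})=O(r^{3}8^{l})$.

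\textbf{Step 2: summation.} Level $l+1$ has $(I/2^{l+1})^{2}$ cells, so the cost at level $l$ is
\[
O\Big(\frac{I^{2}}{4^{l}}\,r^{3}8^{l}\Big)=O(r^{3}I^{2}2^{l}).
\]
Summing over $0\le l\le L-1$ gives a geometric series dominated by its last term, which is $O(r^{3}I^{2}2^{L})$. Because $I^{(L)}=I/2^{L}=O(1)$, we have $2^{L}=O(I)$. The coarsest solve of $B_{\delta}^{(L)}$ has dimension $O(I^{(L)2}\,r2^{L})=O(rI)$ and costs $O(r^{3}I^{3})$. The total is therefore $O(r^{3}I^{3})$.

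\textbf{Main obstacle.} The difficult point is justifying rigorously that each level-$l$ operator really decouples into cell-local blocks whose size is $O(r2^{l})$. This requires checking that the conditions in \eqref{eq:sec3:2}–\eqref{eq:sec3:3} only couple a coarse cell's four children. It also requires that the counts $|\mathcal{V}_{\delta,C,\mathfrak{i}}|=r2^{l}$, which come from the interface length, do not grow because of the union over sub-cells. I would argue the decoupling directly from Lemma \ref{lemma:localization} and the nesting $\mathcal{I}^{(l+1)}\subset\mathcal{I}^{(l)}$. The rank count is taken as the stated uniform assumption.
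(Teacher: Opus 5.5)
Your proposal is correct and follows essentially the same route as the paper: locality (Lemma \ref{lemma:localization}) reduces the construction of the next-level bases to independent dense solves of size $16r2^{l}$ per coarse cell, each costing $O(r^{3}8^{l})$, and summing $O(r^{3}I^{2}2^{l})$ over the levels gives $O(r^{3}I^{3})$. You also account explicitly for the coarsest dense inverse $B_{\delta}^{(L),-1}$, of dimension $O(rI)$ and cost $O(r^{3}I^{3})$, which the paper's proof leaves implicit, but this does not change the bound.
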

\begin{proof}
As analyzed in the previous subsections, the dominant computational cost arises from deriving the explicit expressions of the basis functions at different mesh levels. The matrices $B_{\delta}^{(l)}$, $P_{\delta}^{(l)}$, $Q_{\delta}^{(l)}$, $R_{\delta}^{(l)}$, and $\check{R}_{\delta}^{(l)}$ encode information about these basis functions and can be assembled at negligible additional cost once the basis functions are known. Hence, we focus only on the cost of constructing the basis functions across mesh levels.

The locality of the basis functions in $\mathcal{F}_{\delta}^{(l)}$ and $\mathcal{G}_{\delta}^{(l)}$ greatly simplifies the computation of their explicit expressions. For instance, for $\mathcal{F}_{\delta}^{(l)}$ with $1 \leq l \leq L$, equation \eqref{eq:sec3:2} can be simplified as follows (by substituting \eqref{eq:RSF_basis_expansion} into \eqref{eq:sec3:1}, see Appendix \ref{sec:proof_localization}):
\begin{equation}\label{eq:sec3:13}
    \begin{aligned}
        &\phi_{C_{0},\mathfrak{i}_{0}}^{(l),k_{0}} = \sum_{C\in\mathcal{C}^{(l-1)}\cap C_{0}}\sum_{\mathfrak{i}\in\mathcal{I}^{(l-1)}\cap C}\sum_{k\in\mathcal{V}_{\delta,C,\mathfrak{i}}}\alpha_{C,\mathfrak{i}}^{(l-1),k}\phi_{C,\mathfrak{i}}^{(l-1),k}\\
        s.t.\quad & \mathcal{P}_{\delta,\mathfrak{i}_{0}}^{k_{0}}(\phi_{C_{0},\mathfrak{i}_{0}}^{(l),k_{0}}(\bar{\mathbf{x}}_{\mathfrak{i}_{0}}))=1;\\
        &\mathcal{P}_{\delta,\mathfrak{i}}^{k}(\phi_{C_{0},\mathfrak{i}_{0}}^{(l),k_{0}}(\bar{\mathbf{x}}_{\mathfrak{i}}))=0,\quad \forall\ \mathfrak{i}\in \mathcal{I}^{(l)}\cap C_{0}, \  k\in\mathcal{V}_{\delta,C_{0},\mathfrak{i}},\ (\mathfrak{i},k)\ne (\mathfrak{i}_{0},k_{0});\\
        & \mathcal{P}_{\delta,\mathfrak{i}}^{k}[\phi_{C_{0},\mathfrak{i}_{0}}^{(l),k_{0}}(\bar{\mathbf{x}}_{\mathfrak{i}})]=0,\quad \forall \mathfrak{i}\in (\mathcal{I}^{(l-1)}\setminus\mathcal{I}^{(l)})\cap C_{0},\quad \forall k\in\mathcal{V}_{\delta,\mathfrak{i}}.
    \end{aligned}
\end{equation}
The number of degrees of freedom in \eqref{eq:sec3:13} is $\sum_{C\in\mathcal{C}^{(l-1)}\cap C_{0}}\sum_{\mathfrak{i}\in\mathcal{I}^{(l-1)}\cap C}\vert\mathcal{V}_{\delta,C,\mathfrak{i}}\vert = \sum_{C\in\mathcal{C}^{(l-1)}\cap C_{0}}\vert\mathcal{V}_{\delta,C}\vert = 16r*2^{l-1}$. Therefore, computing the basis functions of $\mathcal{F}_{\delta}^{(l)}$ localized within cell $C_{0}$  costs $O(r^{3}2^{3l})$. Since there are $\vert \mathcal{C}^{(l)}\vert=\frac{I^{2}}{2^{2l}}$ cells on level $l$, the total cost on level $l$ is $O(r^{3}2^{3l})*O(\frac{I^{2}}{2^{2l}})=O(2^{l}r^{3}I^{2})$, and the same estimate holds for $\mathcal{F}_{\delta}^{(0)}$ and $\mathcal{G}_{\delta}^{(l)}$ for $1 \le l \le L$. Thus, the overall cost for computing all explicit basis functions across all levels is $\sum_{l=0}^{L}O(2^{l}r^{3}I^{2})=O(r^{3}I^{3})$.

In practice, the cell-wise computations at each level are mutually independent and can be fully parallelized. Under parallel execution, the work per level reduces to the cost of a single cell, $O(r^3 2^{3l})$, yielding a parallel time complexity of $\sum_{l=0}^{L} O(r^3 2^{3l}) = O(r^3 2^{3L}) = O(r^3 I^3)$ 
Although the asymptotic complexity remains unchanged, the inherent parallelism across independent cells substantially reduces the execution time, leading to significant acceleration in the offline assembly phase.

\end{proof}

\begin{lemma}\label{lemma:online_computation_cost}
    The online computational cost for a single time step, corresponding to applying the multilevel solution operator within the fixed-point iteration \eqref{eq:sec2:21}, scales as $O(\Delta t^{-1} r^2 I^2 \log I)$.
\end{lemma}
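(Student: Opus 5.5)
The bound splits into two factors. One is the cost of a single application of $B_{\delta}^{-1}$ through the recursion of Lemma~\ref{lemma:twoleveldecomposition}, which should be $O(r^2 I^2\log I)$. The other is the number of fixed-point iterations per time step, which should be $O(\Delta t^{-1})$. Their product gives the claim.

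\textbf{Cost of one application.} Unrolling Lemma~\ref{lemma:twoleveldecomposition} from $l=0$ to $l=L$, applying $B_{\delta}^{-1}$ to a right-hand side vector amounts to the following steps:
\begin{itemize}
\item a downward sweep, applying $\check R_{\delta}^{(l)}$, $Q_{\delta}^{(l)}$, $B_{\delta}^{(l+1)}$ and $R_{\delta}^{(l)}$ at each level;
\item a dense solve with $B_{\delta}^{(L)}$ on the coarsest level, which I treat as a precomputed inverse;
\item an upward sweep, applying $P_{\delta}^{(l)}$ and adding the $Q_{\delta}^{(l)}\check R_{\delta}^{(l)}$ contribution.
\end{itemize}
By Lemma~\ref{lemma:localization}, all basis functions at level $l$ or $l+1$ are supported in a single cell of $\mathcal{C}^{(l+1)}$. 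Hence every one of these operators is block diagonal over the $|\mathcal{C}^{(l+1)}|=I^2/2^{2(l+1)}$ coarse cells. Jumps involve only the two adjacent cells, so those operators are block sparse with $O(1)$ blocks per row.

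Each block acts on vectors of length $O(|\mathcal{V}_{\delta,C}|)=O(r2^{l})$, so the stored dense blocks have size $O(r2^l)\times O(r2^l)$. A block matrix-vector product therefore costs $O(r^2 2^{2l})$. Per level the cost is
\[
O\!\left(r^2 2^{2l}\cdot \frac{I^2}{2^{2l}}\right)=O(r^2I^2).
\]
The coarsest solve costs $O((r2^L)^2)=O(r^2I^2)$ since $I^{(L)}=O(1)$. Summing over the $L+1=O(\log I)$ levels gives $O(r^2I^2\log I)$ per application.

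\textbf{Non-solve work per iteration.} Each step of \eqref{eq:sec2:21} also needs the special solution and the boundary and jump data of the right-hand side. Both are cell-local, costing $O(M I^2)$ or, in compressed form, $O(rI^2)$, which is dominated by the solve. The layer reconstruction is likewise local.

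\textbf{Number of iterations (main obstacle).} The fixed-point iteration \eqref{eq:sec2:21} uses relaxation with weight $\Delta t$. I would write the error recursion as $e^{s}=(1-\Delta t)e^{s-1}+\Delta t\,T e^{s-1}$, where $T$ is the map $\bar{\bm\psi}^{n,s-1}\mapsto\bm\psi^{n,\mathrm{ds}}$ restricted to errors. I would then show that $T$ has norm bounded independently of $\Delta t$, with spectrum in a region making the contraction factor $1-c\,\Delta t$ for some $c>0$. This gives a fixed tolerance after $O(\Delta t^{-1})$ iterations.

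I expect this contraction estimate to be the delicate step, because it needs control of $T$ uniform in $\epsilon$ and $h$. If a rigorous bound proves out of reach, I would state the $O(\Delta t^{-1})$ iteration count as the relaxation-induced rate observed numerically, consistent with the Remark's comment that convergence slows as $\Delta t$ decreases. Multiplying the iteration count by the per-application cost yields $O(\Delta t^{-1}r^2I^2\log I)$.
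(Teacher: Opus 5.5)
Your two-factor structure and your per-application count are the same as the paper's proof: block structure per level gives $O(r^2I^2)$, summed over $O(\log I)$ levels, plus an $O(r^2I^2)$ coarsest solve. The paper, like your fallback, simply asserts that the relaxation in \eqref{eq:sec2:21} has a contraction factor of order $1-\Delta t$, and hence needs $O(\Delta t^{-1})$ iterations.

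The rigorous route you sketch for that iteration count would fail at its first step, because $T$ is not bounded independently of $\Delta t$. From \eqref{eq:sec2:21},
\[
\mathcal{L}_{\epsilon,\mathcal{M},h}\bm{\psi}^{n,\mathrm{ds}}=2\bar{q}^{n-1/2}-\mathcal{L}_{\epsilon,\mathcal{M},h}\bm{\psi}^{n-1}-\frac{2}{\Delta t}\big(\bar{\bm{\psi}}^{n,s-1}-\bar{\bm{\psi}}^{n-1}\big).
\]
So, acting on the error $e^{s-1}$, you get $T=-\frac{2}{\Delta t}\mathcal{L}_{\epsilon,\mathcal{M},h}^{-1}\Pi$. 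Here $\Pi$ is the cell-averaging map $\bm{\psi}\mapsto\bar{\bm{\psi}}$, and $\mathcal{L}_{\epsilon,\mathcal{M},h}^{-1}$ is the discrete solve with homogeneous boundary and continuity data. It is $\Delta t\,T$ that is $\Delta t$-independent, not $T$. The error propagator is therefore $(1-\Delta t)I-2\mathcal{L}_{\epsilon,\mathcal{M},h}^{-1}\Pi$, which is not an $O(\Delta t)$ perturbation of the identity.

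Its spectral radius is $\max_\mu|1-\Delta t-2\mu|$ over the eigenvalues $\mu$ of $\mathcal{L}_{\epsilon,\mathcal{M},h}^{-1}\Pi$. Whether you get $O(\Delta t^{-1})$ iterations, $O(1)$ iterations, or divergence is thus decided by a $\Delta t$-independent spectral property of $\mathcal{L}_{\epsilon,\mathcal{M},h}^{-1}\Pi$. One sufficient hypothesis for the claimed bound is that these eigenvalues are real and lie in $[0,1/2]$; this gives spectral radius at most $1-\Delta t$ for $\Delta t\le 1/2$. Without some such hypothesis the iteration count remains an assertion, exactly as in the paper.

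Finally, your paragraph on non-solve work is not needed, since the lemma only counts the solution-operator application. Its domination claim is also doubtful. The special solution lives in the uncompressed space $P_{0}(\mathcal{C},\mathcal{M})$, so that step costs at least $O(MI^2)$, and $O(M^2I^2)$ for a general $\kappa$. That is not dominated by $O(r^2I^2\log I)$ when $r\ll M$.
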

\begin{proof}
    By construction, $B_{\delta}^{(l)}$ is a block-diagonal matrix comprising $|\mathcal{C}^{(l)}| = I^2 2^{-2l}$ blocks, each of size $O(r 2^l) \times O(r 2^l)$. Consequently, applying $B_{\delta}^{(l)}$ to a vector requires $O(r^2 I^2)$ floating-point operations. The same cost bound holds for $Q_{\delta}^{(l)}$ and $P_{\delta}^{(l)}$. For the restriction operators $R_{\delta}^{(l)}$ and $\check{R}_{\delta}^{(l)}$, the per-level cost is $O(r I^2 2^{-l})$. Summing over all levels $l = 0, \dots, L$, the total cost of the solution operator $B_{\delta}^{-1}$ acting on an arbitrary right hand side vector is:
    \[
    \sum_{l=0}^{L} \left[ O(r^2 I^2) + O(r I^2 2^{-l}) \right] = O(L r^2 I^2) = O(r^2 I^2 \log I),
    \]
    where we have used $L \sim \log_2 I$. The fixed-point iteration \eqref{eq:sec2:21} exhibits a contraction factor proportional to $(1 - \Delta t)$; hence, the number of inner iterations required to reach a prescribed tolerance $\mathrm{tol}$ scales as $O(\Delta t^{-1})$. Multiplying the per-iteration cost by the iteration count gives the total sequential complexity $O(\Delta t^{-1} r^2 I^2 \log I)$.

    With full parallelization, the computations involving sub-blocks of $B_{\delta}^{(l)}$ acting on a vector, as well as the computations across sub-block rows, can be performed in parallel. Therefore, applying $B_{\delta}^{(l)}$ to a vector in a fully parallel fashion requires at most $O(r2^{l})$ time. The same parallelization efficiency is achievable for $Q_{\delta}^{(l)}$, $P_{\delta}^{(l)}$, $R_{\delta}^{(l)}$ and $\check{R}_{\delta}^{(l)}$. Thus, the total parallel cost of $B_{\delta}^{-1}$ acting on an arbitrary right hand side vector should be:
    \[
     \sum_{l=0}^{L} O(r 2^{l})   = O(r2^{L}) = O(rI),
    \]
    Considering the $O(\Delta t^{-1})$ inner iterations, the overall parallel computational time is reduced to $O(\Delta t^{-1} rI)$, resulting in a significant acceleration during the online phase.

\end{proof}

\begin{remark}
    Because both the time and space discretizations are second-order accurate and the time stepping is implicit, we do not actually need very small values of $\Delta t$ or the cell width $h$ to get an accurate solution. In that case, the computational cost per time step, $O(\Delta t^{-1} r^2 I^2)$, is acceptable. On the other hand, if a very small $\Delta t$ is truly necessary, we can instead use the iteration in \eqref{eq:sec2:24}, where the number of iterations is bounded above and does not scale as $\Delta t^{-1}$.
\end{remark}

\begin{lemma}\label{lemma:storage_cost}
The storage cost for the multilevel decomposition of the solution operator $B_{\delta}^{-1}$ is $O(r^{2} I^{2} \log(I))$.
\end{lemma}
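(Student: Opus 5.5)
The plan is to count the nonzero entries of every matrix that enters the recursive factorization. By Lemma \ref{lemma:twoleveldecomposition}, applied recursively for $l=0,\dots,L-1$, the stored objects are $P_{\delta}^{(l)}$, $Q_{\delta}^{(l)}$, $R_{\delta}^{(l)}$, $\check{R}_{\delta}^{(l)}$ and $B_{\delta}^{(l)}$, plus the coarsest inverse $B_{\delta}^{(L),-1}$. Storage also covers any precomputed products such as $B_{\delta}^{(l+1)}Q_{\delta}^{(l)}\check{R}_{\delta}^{(l)}$. Each product has the same block sparsity as its factors, so storing it does not change the order of the estimate. The total storage is therefore the sum over levels of the storage at each level. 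The goal is to show each level costs $O(r^2I^2)$, and then to use $L\sim\log_2 I$.

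The key structural input is Lemma \ref{lemma:localization}. It says every basis function of $\mathcal{F}_{\delta}^{(l)}$ and $\mathcal{G}_{\delta}^{(l)}$ indexed by $(C_0,\mathfrak{i}_0,k_0)$ is supported in the single cell $C_0$. Consequently:

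\textbf{$B_{\delta}^{(l)}$.} It maps the cell-wise projection vector $\mathbf{f}_{\mathcal{C}^{(l)}}$ to boundary values and interface jumps. Each jump involves only the two cells adjacent to the interface, so $B_{\delta}^{(l)}$ is block sparse. It has $|\mathcal{C}^{(l)}|=I^22^{-2l}$ block rows, each with $O(1)$ blocks of size $O(r2^l)\times O(r2^l)$. This is the same structure used in the proof of Lemma \ref{lemma:online_computation_cost}.

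\textbf{$P_{\delta}^{(l)}$ and $Q_{\delta}^{(l)}$.} By Lemma \ref{lemma:expression}, a coarse basis function on $C_0\in\mathcal{C}^{(l+1)}$ is determined by its coefficients on the four children $C\in\mathcal{C}^{(l)}\cap C_0$, as in \eqref{eq:sec3:13}. Hence $P_{\delta}^{(l)}$ is block diagonal over $C_0$, with one block of size $O(r2^{l})\times O(r2^{l+1})$ per coarse cell. $Q_{\delta}^{(l)}$ has the same structure, since its columns are indexed by interfaces in $\mathcal{I}^{(l)}\setminus\mathcal{I}^{(l+1)}$ interior to $C_0$.

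\textbf{$R_{\delta}^{(l)}$ and $\check{R}_{\delta}^{(l)}$.} These are restriction or selection maps, and jumps of $f\in\mathcal{F}_{\delta}^{(l)}$ on coarse interfaces are read off from fine ones. They have $O(1)$ nonzeros per row, giving $O(rI^22^{-l})$ storage.

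The counting then proceeds as follows. A matrix of the first two kinds costs
\[
O\!\left(\frac{I^2}{2^{2l}}\right)\cdot O\!\left((r2^l)^2\right)=O(r^2I^2)
\]
at level $l$, while the restriction maps cost $O(rI^22^{-l})$. The coarsest inverse $B_{\delta}^{(L),-1}$ is dense but of size $O(r2^L)=O(rI)$, costing $O(r^2I^2)$. Summing over $l=0,\dots,L$ gives $O(Lr^2I^2)+O(rI^2)=O(r^2I^2\log I)$, as claimed.

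The main obstacle is the sparsity claim for $B_{\delta}^{(l)}$ and for the precomputed products. One must check carefully that a jump at an interface of level $l$ only couples the two adjacent level-$l$ cells. This follows from the cellwise support in Lemma \ref{lemma:localization} together with the fact that $\mathbf{f}_{\mathcal{C}^{(l)}}$ records one-sided traces. One must also confirm that the product $B_{\delta}^{(l+1)}Q_{\delta}^{(l)}\check{R}_{\delta}^{(l)}$ inherits an $O(1)$-blocks-per-row pattern. I would verify both by tracking block indices through the factors: $\check{R}_{\delta}^{(l)}$ selects entries local to a coarse cell, $Q_{\delta}^{(l)}$ stays within that cell, and $B_{\delta}^{(l+1)}$ couples only neighbouring coarse cells.
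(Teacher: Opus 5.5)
Your proposal is correct and follows essentially the same route as the paper. Both arguments count storage level by level:
\begin{itemize}
\item $B_{\delta}^{(l)}$, $P_{\delta}^{(l)}$ and $Q_{\delta}^{(l)}$ each cost $O(r^2I^2)$. The paper phrases this as storing the $O(r2^{l})$ trace and jump values of each of the $O(rI^2/2^{l})$ cell-localized basis functions, which is the same count as your block-sparsity argument.
\item The 0--1 restriction maps $R_{\delta}^{(l)}$ and $\check{R}_{\delta}^{(l)}$ cost $O(rI^2 2^{-l})$.
\item The dense coarsest inverse $B_{\delta}^{(L),-1}$ costs $O(r^2I^2)$.
\end{itemize}
Summing over $L\sim\log_2 I$ levels gives the bound. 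Your additional discussion of precomputed products such as $B_{\delta}^{(l+1)}Q_{\delta}^{(l)}\check{R}_{\delta}^{(l)}$ is harmless but unnecessary, since the paper stores only the individual factors.
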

\begin{proof}
According to Lemma \ref{lemma:twoleveldecomposition}, the multilevel decomposition of $B_{\delta}^{-1}$ consists of the operators  
$B_{\delta}^{(l)}$ for $0 \leq l \leq L$,  
$P_{\delta}^{(l)}$, $Q_{\delta}^{(l)}$, $R_{\delta}^{(l)}$, and $\check{R}_{\delta}^{(l)}$ for $0 \leq l \leq L-1$,  
and the coarsest level inverse $B_{\delta}^{(L),-1}$.  
We now analyze their storage costs level by level.

By definition, for any $f \in \mathcal{F}_{\delta}^{(l)}$, the operator $B_{\delta}^{(l)}$ maps $\mathbf{f}_{\mathcal{C}^{(l)}}$ to 
$(\mathbf{f}_{\mathrm{b}}^{T}, [\mathbf{f}]_{\mathcal{I}^{(l)}_{\mathrm{in}}}^{T})^{T}$. Since $\mathbf{f}_{\mathrm{b}}$ is a subset of $\mathbf{f}_{\mathcal{C}^{(l)}}$, storing this correspondence requires only $O(|\mathcal{F}_{\delta}^{(l)}|) = O(\frac{rI^{2}}{2^{l}})$ memory. Moreover, by Lemma \ref{lemma:expression}, the mapping to $[\mathbf{f}]_{\mathcal{I}^{(l)}_{\mathrm{in}}}$ is fully determined by the jump values $[\bm{\phi}_{C,\mathfrak{i}}^{(l),k}]_{\mathcal{I}^{(l)} \cap C}$ of each basis function $\phi_{C,\mathfrak{i}}^{(l),k} \in \mathcal{F}_{\delta}^{(l)}$. Storing these values incurs a cost of $O(r 2^{\,l}) \cdot O\!\left(\frac{r I^2}{2^{\,l}}\right) = O(r^2 I^2)$ memory, independent of the mesh level $l$.

Similarly, $P_{\delta}^{(l)}$ and $Q_{\delta}^{(l)}$ store $(\bm{\phi}^{(l+1),k}_{C,\mathfrak{i}})_{\mathcal{I}^{(l)} \cap C}$ for each basis function $\phi_{C,\mathfrak{i}}^{(l+1),k}$ in $ \mathcal{F}_{\delta}^{(l+1)}$ and $ \mathcal{G}_{\delta}^{(l+1)}$, respectively, which also requires $O(r^{2} I^{2})$ storage per level.

Furthermore, $R_{\delta}^{(l)}$ and $\check{R}_{\delta}^{(l)}$ are binary (0–1) matrices encoding the nested structure of the solution spaces, and their storage cost is $O(|\mathcal{F}_{\delta}^{(l)}|) = O\!\left(\frac{r I^{2}}{2^{\,l}}\right)$.

Finally, since $B_{\delta}^{(L)}\in\mathbb{R}^{\vert\mathcal{F}_{\delta}^{(L)}\vert\times\vert\mathcal{F}_{\delta}^{(L)}\vert}$ and $|\mathcal{F}_{\delta}^{(L)}| = O(rI)$, the dense inverse $B_{\delta}^{(L),-1}$ requires $O(|\mathcal{F}_{\delta}^{(L)}|^{2}) = O(r^{2}I^{2})$ storage.

Therefore, the overall storage cost becomes $\sum_{l=0}^{L}O(r^{2} I^{2}) = O(r^{2}I^{2}\log(I))$. 

\end{proof}

\section{Numerical Experiments}
\label{sec:experiments}
This section presents numerical experiments to validate the accuracy and the asymptotic preserving (AP) property of the proposed discretization scheme. Furthermore, using benchmark multiscale problems, we verify the scheme's capability to adaptively compress the angular domain in the presence of local low-rank structures.

In the following, we introduce and compare three types of solutions:
\begin{itemize}
    \item \textbf{The exact solution}, which is the exact solution to the discrete ordinate transport equations \eqref{eq:sec3:23}, \eqref{eq:sec2:27}, \eqref{eq:sec2:28}, and \eqref{eq:sec2:29}.
    \item \textbf{The full-order solution}, which is the numerical solution to \eqref{eq:sec3:23}, \eqref{eq:sec2:27}, \eqref{eq:sec2:28}, and \eqref{eq:sec2:29} using TFPS for spatial discretization, denoted by $\tilde{\bm{\psi}}$.
    \item \textbf{The low-rank solution}, which is the numerical solution to \eqref{eq:sec3:23}, \eqref{eq:sec2:27}, \eqref{eq:sec2:28}, and \eqref{eq:sec2:29} using ATFPS for spatial discretization with layer reconstruction, denoted by $\tilde{\bm{\psi}}_{\delta}^{*}$.
\end{itemize}

To quantify the reduction in degrees of freedom achieved by the low rank scheme, we define the rank ratio as:
\begin{equation}\label{eq:sec5:3}
    \mathrm{rank\ ratio}=\frac{\sum_{C\in\mathcal{C}}\vert\mathcal{V}_{\delta,C}\vert}{\sum_{C\in\mathcal{C}}\vert\mathcal{V}\vert},
\end{equation}
which represents the ratio of the total number of ATFPS bases to the total number of TFPS bases in the spatial discretization. 

To assess the accuracy of the low-rank solution against the full-order or exact solution, we introduce the discrete error norm $\Vert \cdot\Vert_{I\times I}$. For any $4M$-dimensional vector-valued function $f(x,y)$ on a spatial domain $\Omega=[a,b]\times[a,b]$, uniformly discretized by an $I\times I$ grid (where $h=\frac{b-a}{I}$, $x_{i}=ih$, and $y_{j}=jh$), the norm is defined as:
\begin{equation*}
    \Vert f\Vert_{I\times I}=\frac{h}{\sqrt{M}}\Big(\sum_{i,j=1}^{I}\sum_{m\in\mathcal{M}}(f_{m}(x_{i-1/2},y_{j-1/2}))^{2}\Big)^{1/2}.
\end{equation*}
Note that this norm extends to scalar functions by setting $f_{m}=f$ for all $m\in\mathcal{M}$ to construct the vector $(f_{1},f_{2},\dots,f_{4M})^{T}$. Then The relative $L_{2}$ error between $\tilde{\bm{\psi}}$ and $\tilde{\bm{\psi}}_{\delta}^{*}$ on the $I\times I$ mesh is given by $\frac{\Vert\tilde{\bm{\psi}}- \tilde{\bm{\psi}}_{\delta}^{*}\Vert_{I\times I}}{\Vert\tilde{\bm{\psi}}\Vert_{I\times I}}$. In the following, we take $I=1024$.

\begin{remark}
A numerical scheme is termed \textit{asymptotic preserving} (AP) if it correctly captures the limiting macroscopic equation (in this case, the diffusion limit) at the discrete level, without requiring the spatial mesh or time step to resolve microscopic scales. This property guarantees numerical robustness in the asymptotic regime as certain parameter becomes arbitrarily small. 

It has been established that the TFPS possesses the AP property for the steady-state RTE \cite{han2014two,wang2022uniform}. Consequently, the adaptive variant (ATFPS) inherits this AP character \cite{AdaptiveTFPS}. However, the AP property of a spatial discretization does not automatically extend to time-dependent problems \cite{hu2017asymptotic,jin2022asymptotic}. For instance, \cite{jin2022asymptotic} demonstrates that coupling a Crank--Nicolson time integrator with a pseudo-spectral spatial discretization may fail to recover the correct diffusion limit, even for vanishingly small time steps (e.g., $\Delta t = 10^{-4}$). In contrast, employing Strang splitting for the temporal discretization alongside the same spatial scheme successfully preserves the asymptotic behavior. Therefore, a rigorous verification of the AP property for the fully discrete scheme is essential.

\end{remark}
\subsection{Accuracy and efficiency test}
\label{subsec:accuracy_test}
We begin by assessing the accuracy and efficiency of our discretization scheme using a manufactured solution with constant coefficients. Let the exact solution be defined as: 
\[
\bm{\psi}_{\mathrm{exact}}(x,y,\mathbf{u},t)=\frac{t}{1+t}(1+\epsilon x)\xi_{\mathrm{min}}\exp(\frac{\sqrt{3}}{2}\lambda_{\mathrm{min}}x+\frac{1}{2}\lambda_{\mathrm{min}}y),
\]
where $\lambda_{\mathrm{min}}$ is the negative eigenvalue of smallest magnitude for the matrix given below, and $\xi_{\mathrm{min}}$ is the corresponding eigenvector. The matrix is defined as:
\[
(\frac{\sqrt{3}}{2}C+\frac{1}{2}S)^{-1}\Big((\frac{\sigma_{T}}{\epsilon}-\epsilon\sigma_{a})W-\frac{\sigma_{T}}{\epsilon}I\Big)
\]
with $\sigma_{T}$, $\epsilon$ and $\sigma_{a}$ being constant and 
\[
C=\mathrm{diag}\{c_{1}, c_{2},\dots,c_{4M}\},\quad S=\mathrm{diag}\{s_{1}, s_{2}, \dots, s_{4M},\quad W = 
\begin{pmatrix}
    \omega_{1} & \omega_{2} & \dots &\omega_{4M}\\
    \omega_{1} & \omega_{2} & \dots &\omega_{4M}\\
    \vdots &  \vdots & \vdots & \vdots \\
    \omega_{1} & \omega_{2} & \dots &\omega_{4M}\\
\end{pmatrix}.
\]
Consequently, $\bm{\psi}_{\mathrm{exact}}$ serves as the exact solution for the isotropic ($\kappa\equiv 1$) time-dependent RTE \eqref{eq:sec1:1} with angular discretization, provided that the external source term is given by:
\[
q_{\mathrm{exact}}=\left[\frac{1}{(t+1)^{2}}(1+\epsilon x)\xi_{\mathrm{min}}+\frac{t}{1+t}C\xi_{\mathrm{min}}\right]\exp\left(\frac{\sqrt{3}}{2}\lambda_{\mathrm{min}}x+\frac{1}{2}\lambda_{\mathrm{min}}y\right).
\]
Additionally, the initial condition is set to $\bm{\psi}(x,y,\mathbf{u},0)=0$, and the inflow boundary condition $\bm{\psi}(x,y,\mathbf{u},t)|_{\Gamma^{-}}$ is set to match $\bm{\psi}_{\mathrm{exact}}|_{\Gamma^{-}}$.

The numerical experiments are configured with the following parameters: $\sigma_T = 1$, $\sigma_a = 0.5$, final time $T = 1$, and basis selection threshold $\delta = 10^{-3}$. The angular domain is discretized using 4, 12, and 24 discrete ordinates, corresponding to $M = 1$, $3$, and $6$, respectively. To evaluate the scheme's performance across regimes ranging from kinetic to diffusive, we vary the scaling parameter (mean free path) $\varepsilon \in \{1/2, 1/8, 1/32, 1/128, 1/512\}$ and the spatial mesh size $h \in \{1/4, 1/8, 1/16, 1/32, 1/64\}$, while fixing the time step at $\Delta t = h$. 
For each parameter combination, we report the relative $L^2$ error between the numerical and exact solutions at $T = 1$ to quantify accuracy. Computational efficiency is assessed by recording the execution time for the offline phase (assembly of the multilevel decomposition of the solution operator) and the online phase (application of the operator to the right-hand side vectors at each iteration). To accelerate computations, GPU parallelization is leveraged for all operations within each mesh level during both phases. The algorithm is implemented in Python using the PyTorch framework, and all numerical experiments are conducted on a workstation equipped with an NVIDIA RTX A6000 GPU.

Figure \ref{fig:accuracy_test} shows how the relative $L_{2}$ error changes with $h$ for each fixed $M$ and $\epsilon$, which demonstrate that the proposed discretization scheme achieves uniform second-order accuracy with respect to $\epsilon$, thereby confirming its asymptotic preserving (AP) property. Notably, our method maintains its accuracy even in the intermediate regime where $\epsilon$ and $h$ are of comparable size. This robustness highlights a distinct advantage of the proposed scheme.

    

\begin{figure}
    \centering
    \includegraphics[width=0.9\linewidth]{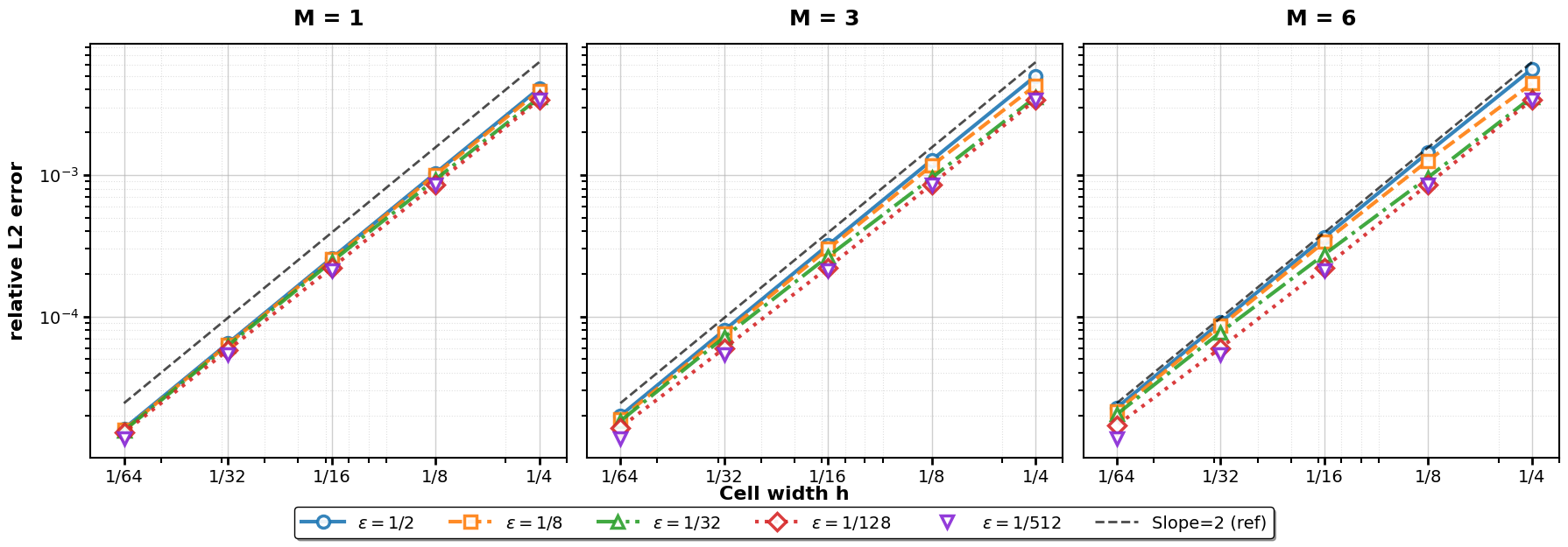}
    \caption{Relative $L_2$ error between exact and numerical solutions versus cell width $h$ for different numbers of velocity directions. }
    \label{fig:accuracy_test}
\end{figure}

Figures \ref{fig:efficiency_test_offline} and Table \ref{tab:efficiency_test_offline} summarize the computational efficiency of the offline assembly phase. Figure \ref{fig:efficiency_test_offline} plots the computation time required to construct the multilevel solution operator against the spatial mesh size $h$ for various $\epsilon$ and $M$, while Table \ref{tab:efficiency_test_offline} reports the estimated order of offline assembling time with respect to the cell width $h$ based on consecutive $h$ levels for each fixed $M$ and $\epsilon$. The results demonstrate that, by leveraging GPU parallelization, the offline assembly time gradually approaches $O(h^{-3})$, which is well within the theoretical upper bound $O(h^{-3})$ derived in the complexity analysis.

\begin{figure}
    \centering
    \includegraphics[width=0.9\linewidth]{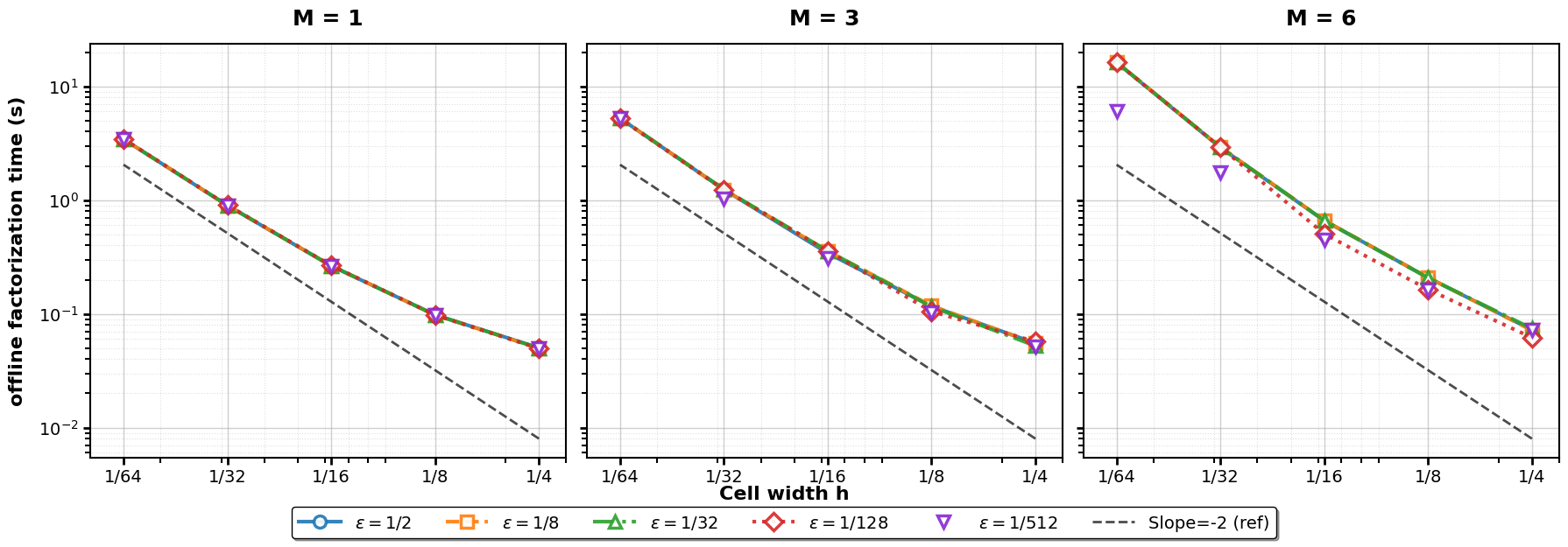}
    \caption{Offline assembling time for the multilevel decomposition of the solution operator versus cell width $h$, with varying velocity directions. }
    \label{fig:efficiency_test_offline}
\end{figure}

\begin{table}[htbp]
\centering
\caption{Estimated convergence order of offline assembling time with respect to $h$ based on consecutive $h$ refinements for varying $\varepsilon$ and $N$}
\label{tab:efficiency_test_offline}
\begin{tabular}{c c|cccc}
\toprule
\multicolumn{2}{c|}{} & \multicolumn{4}{c}{$h$ pairs} \\
\cmidrule(lr){3-6}
$M$ & $\varepsilon$ & $1/4 \to 1/8$ & $1/8 \to 1/16$ & $1/16 \to 1/32$ & $1/32 \to 1/64$ \\
\midrule
\multirow{5}{*}{1} & $1/2$   & 0.96 & 1.43 & 1.77 & 1.94 \\
                    & $1/8$   0.97 & 1.43 & 1.76 & 1.93 \\
                    & $1/32$  & 0.97 & 1.44 & 1.76 & 1.93 \\
                    & $1/128$ & 0.98 & 1.44 & 1.76 & 1.93 \\
                    & $1/512$ & 0.96 & 1.43 & 1.77 & 1.95 \\
\midrule
\multirow{5}{*}{3} & $1/2$   & 1.08 & 1.55 & 1.86 & 2.08 \\
                    & $1/8$   & 1.10 & 1.59 & 1.80 & 2.08 \\
                    & $1/32$  & 1.14 & 1.63 & 1.80 & 2.08 \\
                    & $1/128$ & 0.88 & 1.76 & 1.80 & 2.08 \\
                    & $1/512$ & 1.01 & 1.57 & 1.74 & 2.34 \\
\midrule
\multirow{5}{*}{6} & $1/2$   & 1.53 & 1.66 & 2.14 & 2.48 \\
                    & $1/8$   & 1.54 & 1.67 & 2.14 & 2.48 \\
                    & $1/32$  & 1.48 & 1.67 & 2.14 & 2.48 \\
                    & $1/128$ & 1.41 & 1.62 & 2.53 & 2.47 \\
                    & $1/512$ & 1.18 & 1.47 & 1.97 & 1.78 \\
\bottomrule
\end{tabular}
\end{table}

Figures \ref{fig:efficiency_test_online} and Table \ref{tab:efficiency_test_online} present the performance of the online solution stage. Figure \ref{fig:efficiency_test_online} depicts the total online computation time for advancing all time steps as a function of $h$, and Table \ref{tab:efficiency_test_online} presents the estimated order of the total online computation time  with respect to the cell width $h$ based on consecutive $h$ levels for each fixed $M$ and $\epsilon$. The data reveal that the online solution time scales approximately as $O(h^{-2.5})$. According to Lemma \ref{lemma:online_computation_cost}, the theoretical parallel complexity for a single time step is $O(\Delta t^{-1} r I)$. With $\Delta t = h$ and $I \propto h^{-1}$, the worst-case total time over all time steps scales as $O(\Delta t^{-2} r I) = O(h^{-3})$. The empirically observed growth of approximately $O(h^{-2.5})$ is milder than this theoretical worst-case bound, indicating that the actual computational workload may benefit either from favorable spectral properties in practice or from better GPU parallelism during runtime.
\begin{figure}
    \centering
    \includegraphics[width=0.9\linewidth]{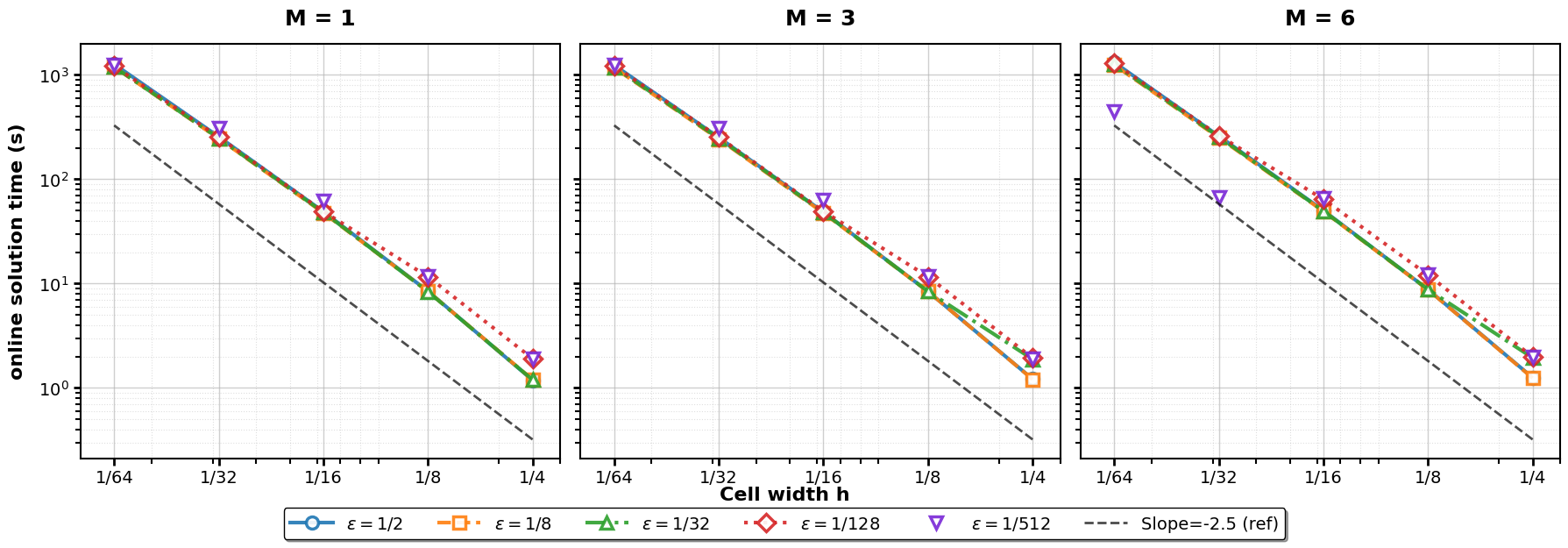}
    \caption{Online solution time for applying the solution operator versus cell width $h$ for different numbers of velocity directions.}
    \label{fig:efficiency_test_online}
\end{figure}

\begin{table}[htbp]
\centering
\caption{Estimated convergence order of online solution time with respect to $h$ based on consecutive $h$ refinements for varying $\varepsilon$ and $N$}
\label{tab:efficiency_test_online}
\begin{tabular}{c c|cccc}
\toprule
\multicolumn{2}{c|}{} & \multicolumn{4}{c}{$h$ pairs} \\
\cmidrule(lr){3-6}
$M$ & $\varepsilon$ & $1/4 \to 1/8$ & $1/8 \to 1/16$ & $1/16 \to 1/32$ & $1/32 \to 1/64$ \\
\midrule
\multirow{5}{*}{1} & $1/2$   & 2.83 & 2.53 & 2.38 & 2.31 \\
                    & $1/8$   & 2.81 & 2.50 & 2.36 & 2.28 \\
                    & $1/32$  & 2.81 & 2.53 & 2.36 & 2.27 \\
                    & $1/128$ & 2.61 & 2.08 & 2.37 & 2.27 \\
                    & $1/512$ & 2.63 & 2.41 & 2.30 & 2.01 \\
\midrule
\multirow{5}{*}{3} & $1/2$   & 2.79 & 2.53 & 2.37 & 2.30 \\
                    & $1/8$   & 2.79 & 2.50 & 2.35 & 2.27 \\
                    & $1/32$  & 2.16 & 2.52 & 2.36 & 2.26 \\
                    & $1/128$ & 2.59 & 2.08 & 2.36 & 2.26 \\
                    & $1/512$ & 2.62 & 2.41 & 2.30 & 2.01 \\
\midrule
\multirow{5}{*}{6} & $1/2$   & 2.81 & 2.52 & 2.37 & 2.35 \\
                    & $1/8$   & 2.81 & 2.50 & 2.34 & 2.31 \\
                    & $1/32$  & 2.18 & 2.50 & 2.36 & 2.31 \\
                    & $1/128$ & 2.61 & 2.41 & 2.03 & 2.31 \\
                    & $1/512$ & 2.63 & 2.43 & 0.04 & 2.72 \\
\bottomrule
\end{tabular}
\end{table}

\subsection{Benchmark multiscale tests}
\label{subsec:benchmark}

In this subsection, we evaluate the scheme's performance on two multiscale benchmark problems: the lattice case and the bufferzone case \cite{AdaptiveTFPS}. The study aims to demonstrate the effectiveness of our discretization scheme in capturing local low-rank structures in the angular domain, and to examine the trade-off between accuracy and efficiency using the adjustable threshold $\delta$ (or equivalently, the rank ratio).

In the following case, we consider a system with zero initial condition and zero external source. The time-dependent isotropic inflow boundary condition is defined as:
\[
\psi_{\Gamma^{-}}=\frac{t}{1+t}.
\]
The computational parameters include a final simulation time of $T = 1$, a time-step size of $\Delta t = 1/32$, resulting in $N = T / \Delta t = 32$ time steps, a spatial grid of $I \times I = 32 \times 32$, and angular directions $4M \in \{24, 40, 60\}$.

\subsubsection{Lattice case}
In the lattice example, two materials with different optical properties coexist, creating sharp interfaces between them, as illustrated in Figure~\ref{fig:benchmark_lattice_test}(A). We set $\sigma_{T}=1$ and $\sigma_{a}=0.5$ in the computational domain $\Omega = [0,1]\times[0,1]$. In the blue rectangles (diffusion regime), we choose $\epsilon = 0.01$, whereas in the yellow rectangles (transport regime), we set $\epsilon = 1$.

Figure \ref{fig:benchmark_lattice_test} presents the numerical results for the lattice benchmark case. The figure contains five panels besides lattice layout illustrating the scheme’s efficiency and accuracy: (B) Basis Function Count: The spatial distribution of the number of basis functions in each cell for the low-rank solution (at a rank ratio of $60\%$); (C) Full-Order Scalar Flux: The full-order scalar flux $\tilde{\phi}^{N}$ at $T=1$ for a chosen number of velocity directions ($M=10$). (D) Low-Rank Scalar Flux: The corresponding low-rank scalar flux $\tilde{\phi}_{\delta}^{*,N}$ for $M=10$ at $T=1$ (at a rank ratio of $60\%$). (E) and (F) Error v.s. rank ratio: The evolution of the relative $L_{2}$ errors versus the rank ratio for different numbers of velocity directions ($M = 3, 6, 10$, or $15$, corresponding to $12, 24, 40$, and $60$ discrete velocity directions, respectively). Both the error in the angular flux ($\tilde{\bm{\psi}}^{N}$ versus its low-rank approximation $\tilde{\bm{\psi}}_{\delta}^{*,N}$) and the error in the scalar flux ($\tilde{\phi}^{N}$ versus $\tilde{\phi}_{\delta}^{*,N}$) are tracked.

\begin{figure}[htbp]
    \centering
    \begin{minipage}{0.45\textwidth}
        \centering
        \includegraphics[width=\linewidth]{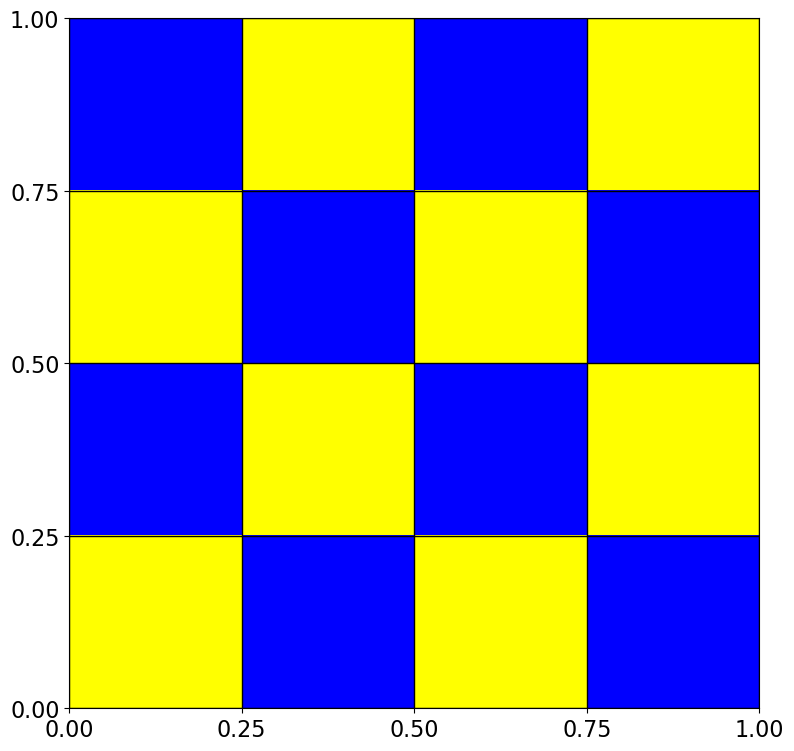}
    \small (a)
    \end{minipage}
    \begin{minipage}{0.45\textwidth}
        \centering
        \includegraphics[width=\linewidth]{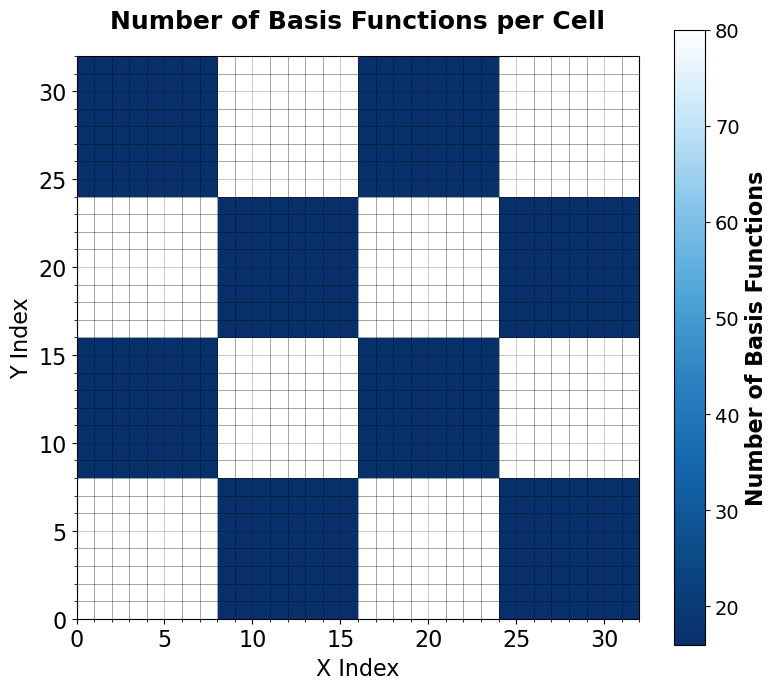}
    \small (b)
    \end{minipage}

    \begin{minipage}{0.45\textwidth}
        \centering
        \includegraphics[width=\linewidth]{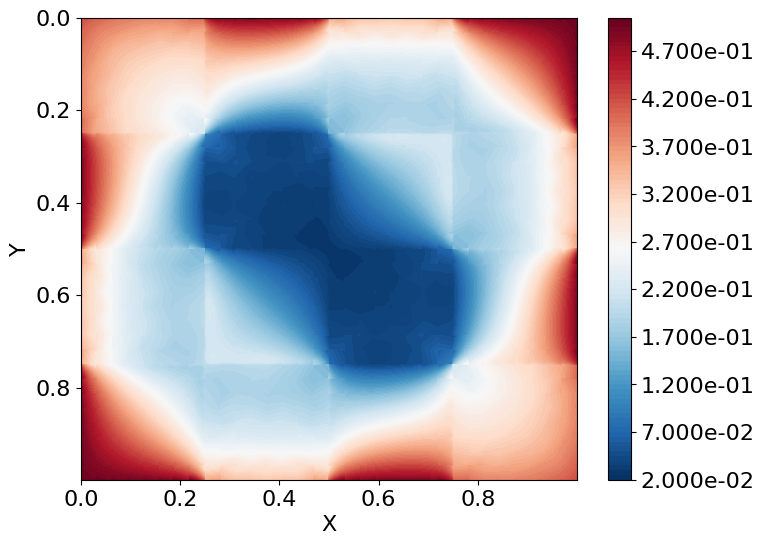}
    \small (c)
    \end{minipage}
    \begin{minipage}{0.45\textwidth}
        \centering
        \includegraphics[width=\linewidth]{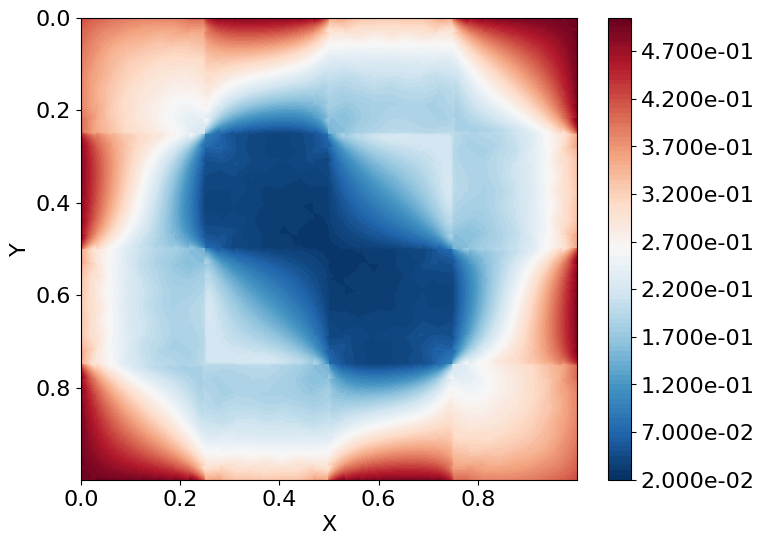}
    \small (d)
    \end{minipage}

    \begin{minipage}{0.45\textwidth}
        \centering
        \includegraphics[width=\linewidth]{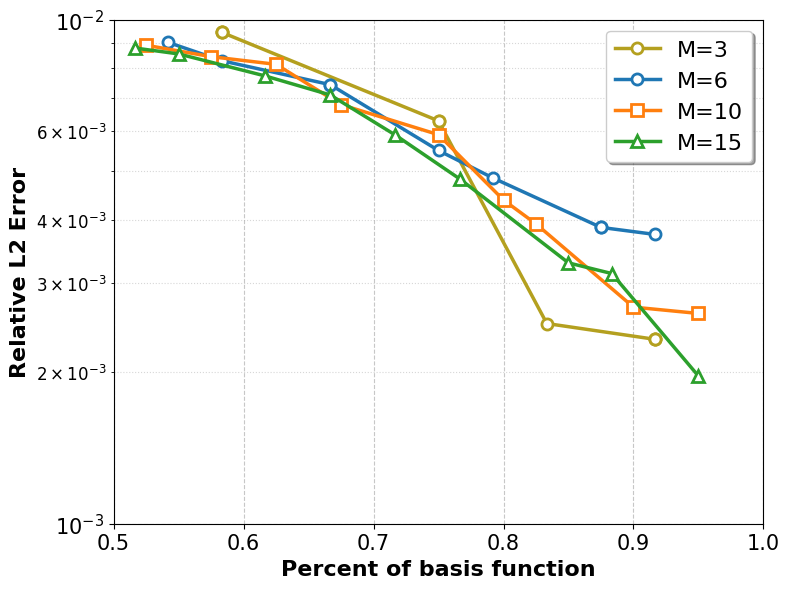}
    \small (e)
    \end{minipage}
    \begin{minipage}{0.45\textwidth}
        \centering
        \includegraphics[width=\linewidth]{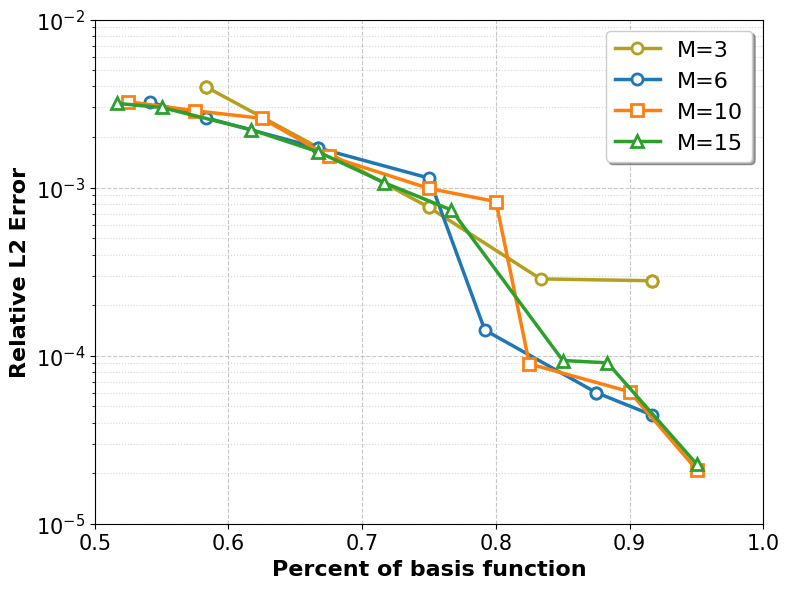}
    \small (f)
    \end{minipage}
    \caption{Numerical results in lattice case. (a) Layout for lattice case; (b) The number of basis functions in each physical cell for the low rank solution when rank ratio is $60\%$; (c) The full order solution at $t=1$s; (d) The low rank solution at $t=1$s; (e) error v.s. rank ratio for angular flux; (f) error v.s. rank ratio for scalar flux.}
    \label{fig:benchmark_lattice_test}
\end{figure}

The results demonstrate that our discretization scheme effectively compresses the angular domain in the presence of local low-rank structures. As shown in Figure~\ref{fig:benchmark_lattice_test}(B), the number of basis functions remains uncompressed in the transport regime. In contrast, in the diffusion regime, the number of basis functions per cell is significantly reduced to four, which is consistent with the asymptotic analysis. Moreover, the comparison between Figures~\ref{fig:benchmark_lattice_test}(C) and~\ref{fig:benchmark_lattice_test}(D) confirms the high accuracy of our compression approach. Finally, Figures~\ref{fig:benchmark_lattice_test}(E) and~(F) demonstrate that adjusting the threshold $\delta$ allows for a controlled trade-off between accuracy and computational efficiency.

\subsubsection{bufferzone case}
Next, we consider the bufferzone case where the total cross section $\sigma_{T}$, absorption corss section $\sigma_{a}$ and mean free path $\epsilon$ are selected as follows
\[
\Sigma_{T} = 1+x^2+y^2, \ \sigma_a=0.5+x^2+y^2, \ \epsilon=0.02x+0.001, \quad (x,y)\in\Omega=[0,1]\times[0,1].
\]

Figure~\ref{fig:benchmark_bufferzone_test} illustrates the numerical results for the buffer-zone scenario. The figure comprises six panels: (A) and (B) Basis Function Count: Spatial distributions of the number of basis functions per cell in the low-rank solution, shown for rank ratios of $20\%$ and $40\%$, respectively when $M=10$. (C) Full-Order Scalar Flux: The full-order scalar flux $\tilde{\phi}^{N}$ at $T = 1$ for a selected number of velocity directions ($M = 10$). (D) Low-Rank Scalar Flux: The corresponding low-rank scalar flux $\tilde{\phi}_{\delta}^{*,N}$ at $T = 1$ for $M = 10$, with a rank ratio of $20\%$. (E) and (F) Error v.s. Rank Ratio: The variation of the relative $L_2$ errors as a function of the rank ratio for different numbers of velocity directions($M = 3, 6, 10$, or $15$, corresponding to $12, 24, 40$, and $60$ discrete velocity directions, respectively), including both the error in the angular flux ($\tilde{\bm{\psi}}^{N}$ versus its low-rank approximation $\tilde{\bm{\psi}}_{\delta}^{*,N}$) and the error in the scalar flux ($\tilde{\phi}^{N}$ versus $\tilde{\phi}_{\delta}^{*,N}$).

\begin{figure}[htbp]
    \centering
    \begin{minipage}{0.45\textwidth}
        \centering
        \includegraphics[width=\linewidth]{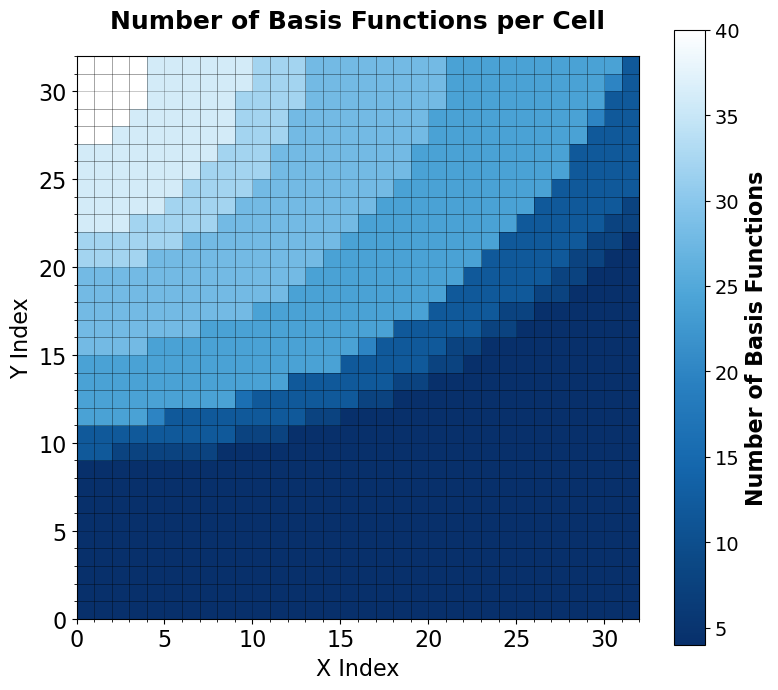}
    \small (a)
    \end{minipage}
    \begin{minipage}{0.45\textwidth}
        \centering
        \includegraphics[width=\linewidth]{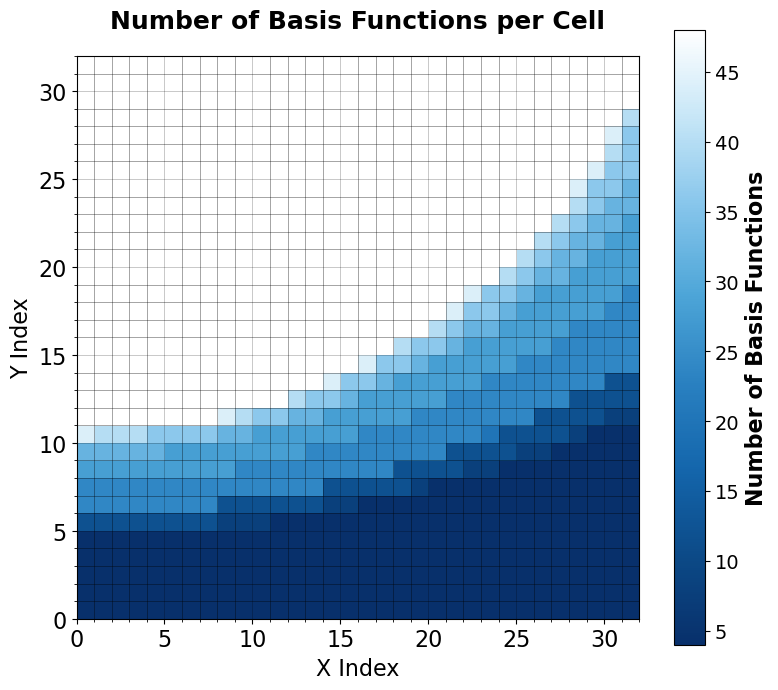}
    \small (b)
    \end{minipage}

    \begin{minipage}{0.45\textwidth}
        \centering
        \includegraphics[width=\linewidth]{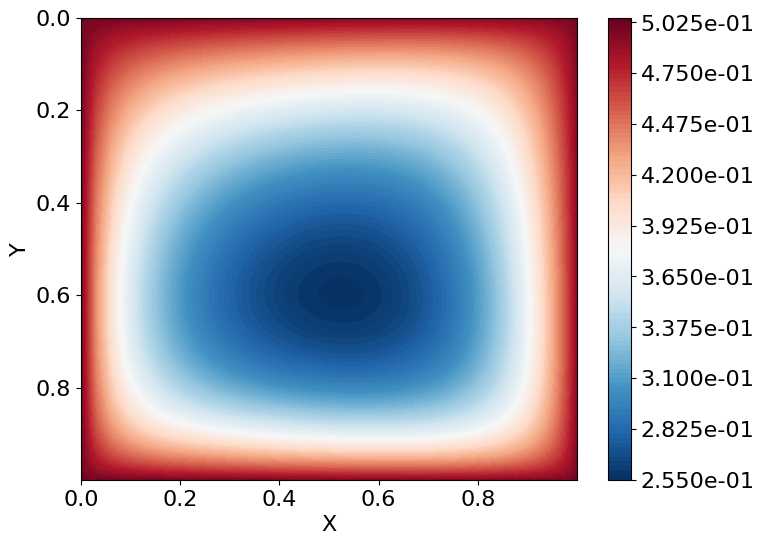}
    \small (c)
    \end{minipage}
    \begin{minipage}{0.45\textwidth}
        \centering
        \includegraphics[width=\linewidth]{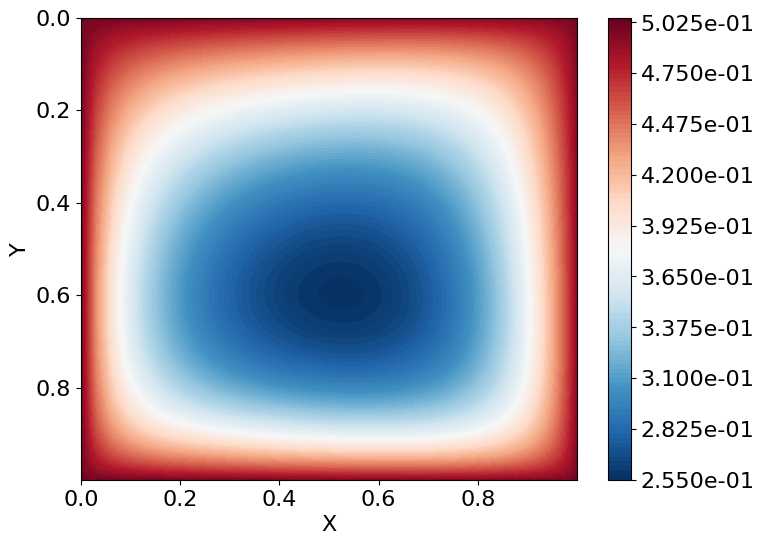}
    \small (d)
    \end{minipage}

    \begin{minipage}{0.45\textwidth}
        \centering
        \includegraphics[width=\linewidth]{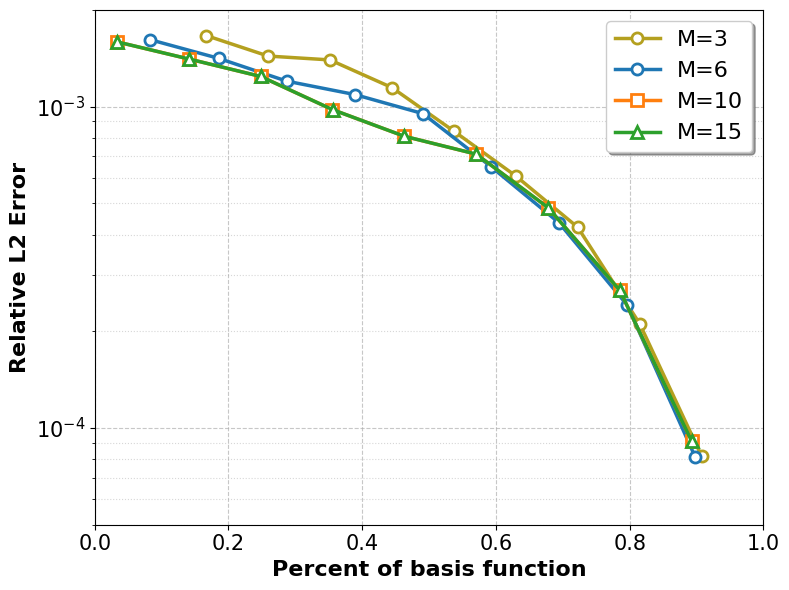}
    \small (e)
    \end{minipage}
    \begin{minipage}{0.45\textwidth}
        \centering
        \includegraphics[width=\linewidth]{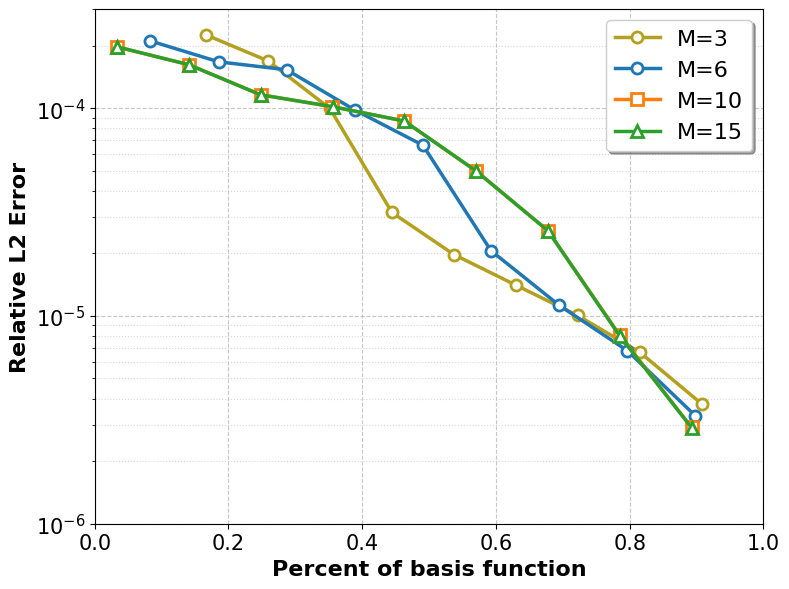}
    \small (f)
    \end{minipage}
    \caption{Numerical results in bufferzone case. (a) The number of basis functions in each physical cell for the low rank solution when rank ratio is $40\%$; (b) The number of basis functions in each physical cell for the low rank solution when rank ratio is $20\%$; (c) The full order solution at $t=1$s; (d) The low rank solution at $t=1$s when rank ratio is $40\%$; (e) error v.s. rank ratio for angular flux; (f) error v.s. rank ratio for scalar flux.}
    \label{fig:benchmark_bufferzone_test}
\end{figure}

The results demonstrate the adaptive compression effect of the ATFPS method in the presence of a buffer zone. As shown in Figures~\ref{fig:benchmark_bufferzone_test}(B) and (C), when the material gradually transitions from the transport regime to the diffusion regime, the number of basis functions at the corresponding locations decreases progressively from $8M$ (the fully uncompressed number) to 4 (the fully compressed number). Moreover, increasing the rank ratio results in a continuous increase of the number of basis functions per cell.  
Figures~\ref{fig:benchmark_bufferzone_test}(E) and (F) further illustrate that adjusting the threshold $\delta$ enables a controlled trade-off between accuracy and computational efficiency for the buffer-zone case. Compared with Figures~\ref{fig:benchmark_lattice_test}(E) and (F), the curves here are smoother, which is attributed to the gradual variation of the optical properties of the material.

\section{Conclusion}
\label{sec:conclusion}
Unlike the steady-state RTE, which characterizes the equilibrium distribution of radiation, the time-dependent RTE incorporates a temporal dimension to model the evolution of radiation propagation. Under standard diffusive scaling, the use of explicit time discretization introduces a restrictive CFL condition on the time step size, requiring $\Delta t\leq C\epsilon\Delta x$. To overcome this constraint, we employ an implicit time discretization, reformulating the time-dependent RTE as a sequence of steady-state RTEs with consistent cross sections. Crucially, the discrete solution space of the original time-dependent RTE aligns with that of these steady-state subproblems, consistent with the natural convergence of the time-dependent solution to the steady-state solution as $t \to \infty$.

For this family of steady-state RTEs, we first exploit the local low-rank structure in the angular domain to adaptively compress the solution space. This is accomplished by our newly developed ATFPS, originally devised for steady-state problems but readily extendable to the time-dependent case as argued above. Moreover, because we need to solve multiple steady-state RTEs sharing the same cross sections, we can explicitly assemble the solution operator that maps the source and boundary conditions to the corresponding solution, thereby achieving significant computational acceleration. This is realized via the RS method, which leverages the locality of the basis functions to perform a multilevel decomposition of the solution space and yields an explicit multilevel expression of the solution operator.

Furthermore, our framework enables efficient handling of changes in time step size, boundary conditions, and external source terms. These attributes collectively render our approach a powerful solver for multiscale time-dependent RTE problems, and even for inverse problems — a direction that will be the primary focus of our upcoming work.

\appendix
\section{TFPS basis functions}
\label{appendix:basis}

For any cell \( C \in \mathcal{C} \), the explicit expressions of the TFPS basis localized in cell \( C \) are shown in \eqref{eq:sec2:6x} and \eqref{eq:sec2:6y}. 
\begin{eqnarray}\label{eq:sec2:6x}
    \begin{aligned}
        \phi_{C}^{k}(x,y)=\xi_{C}^{(k)}\exp\{\lambda_{C}^{(k)}\sigma_{T,C}(x-x_{C}^{(k)})\},\quad 1\leq k\leq 4M,\\
        x_{C}^{(k)}=x_C^l,\quad 1\leq k\leq 2M;\quad x_{C}^{(k)}=x_C^r,\quad 2M+1\leq k\leq 4M;\\
    \end{aligned}
\end{eqnarray}  
\begin{eqnarray}\label{eq:sec2:6y}
    \begin{aligned}        
        \phi_{C}^{k}(x,y)=\xi_{C}^{(k)}\exp\{\lambda_{C}^{(k)}\sigma_{T,C}(y-y_C^{(k)})\},\quad 4M+1\leq k\leq 8M,\\
        y^{(k)}=y_{C}^{b},\quad 4M+1\leq k\leq 6M;\quad y^{(k)}=y_{C}^{t},\quad 6M+1\leq k\leq 8M;
    \end{aligned}
\end{eqnarray}
Within these expressions, \((\lambda_{C}^{(k)}, \xi_{C}^{(k)})\) for \( 1 \leq k \leq 4M \) and $(\lambda_{C}^{(k)},\xi_{C}^{(k)})$ for $4M+1\leq k\leq 8M$ are the eigenpairs corresponding to the following two matrices respectively,
\begin{equation}\label{eq:twomatrices}
    M_{C}^{x}=D^{-1}[\frac{\sigma_{s,C}}{\sigma_{T,C}}KW-I],\quad M_{C}^{y}=S^{-1}[\frac{\sigma_{s,C}}{\sigma_{T,C}}KW-I],
\end{equation}
where $D$, $S$, $W$ and $K$ are defined as:
\[D=\diag\{c_{1},c_{2},\dots,c_{4M}\},\  S=\diag\{s_{1},s_{2},\dots,s_{4M}\},\  W=\diag\{\omega_{1},\omega_{2},\dots,\omega_{4M}\},\]
$$K=
\begin{pmatrix}
    \kappa_{1,1} & \kappa_{1,2} &\dots & \kappa_{1,4M}\\
    \kappa_{2,1} & \kappa_{2,2} &\dots & \kappa_{2,4M}\\
    \vdots\\
    \kappa_{4M,1} & \kappa_{4M,2} &\dots & \kappa_{4M,4M}\\
\end{pmatrix}
.$$
We note that \(\xi_{C}^{(k)}\) are normalized such that \(\Vert \xi_{C}^{(k)} \Vert_{\infty} = 1\).

\section{The definition of notation $\mathcal{P}_{\delta,\mathfrak{i}}^{k}$}
\label{appendix:projection_operator}
For any $\mathfrak{i}\in\mathcal{I}_{\mathrm{in}}$, we define 
\[
U_{\mathfrak{i}}=\mathrm{span}\{\xi_{C}^{(k)}\}_{\mathfrak{i}\in C,k\in\mathcal{V}_{C,\mathfrak{i}}},\ U_{\delta,\mathfrak{i}}=\mathrm{span}\{\xi_{C}^{(k)}\}_{\mathfrak{i}\in C, k\in\mathcal{V}_{\delta,C,\mathfrak{i}}},\ \bar{U}_{\delta,\mathfrak{i}}=\mathrm{span}\{\xi_{C}^{(k)}\}_{\mathfrak{i}\in C, k\in\bar{\mathcal{V}}_{\delta,C,\mathfrak{i}}=\mathcal{V}_{C,\mathfrak{i}}\setminus\mathcal{V}_{\delta,C,\mathfrak{i}}}.
\]
Clearly $U_{\mathfrak{i}} = U_{\delta,\mathfrak{i}} + \bar{U}_{\delta,\mathfrak{i}}$, and by the linear independence result from \cite{AdaptiveTFPS}, we have $U_{\mathfrak{i}}=\mathbb{R}^{4M}$. 

Performing QR decomposition on $\{\xi_{C}^{(k)}\}_{\mathfrak{i}\in C, k\in\mathcal{V}_{\delta,C,\mathfrak{i}}}$ yields an orthonormal set $\{\chi_{\mathfrak{i}}^{(k)}\}_{k\in \mathcal{V}_{\delta}^{\mathfrak{i}}}$. Besides, we rewrite $\{\xi_{C}^{(k)}\}_{\mathfrak{i}\in C, k\in\bar{\mathcal{V}}_{\delta,C}^{\mathfrak{i}}}$ as $\{\chi_{\mathfrak{i}}^{(k)}\}_{k\in \bar{\mathcal{V}}_{\delta}^{\mathfrak{i}}}$. These vectors are normalized such that $\Vert\chi_{\mathfrak{i}}^{(k)}\Vert_{\infty}=1$. Thus $\{\chi_{\mathfrak{i}}^{(k)}\}_{k\in \mathcal{V}_{\delta,\mathfrak{i}}}$, $\{\chi_{\mathfrak{i}}^{(k)}\}_{k\in \bar{\mathcal{V}}_{\delta,\mathfrak{i}}}$ and $\{\chi_{\mathfrak{i}}^{(k)}\}_{k\in \mathcal{V}_{\mathfrak{i}}}$ serve as basis vectors for $U_{\delta,\mathfrak{i}}$, $\bar{U}_{\delta,\mathfrak{i}}$ and $U_{\mathfrak{i}}$, respectively. 
Hence we can write  
\[
U_{\delta,\mathfrak{i}}=\mathrm{span}\{\chi_{\mathfrak{i}}^{(k)}\}_{k\in \mathcal{V}_{\delta}^{\mathfrak{i}}},\quad \bar{U}_{\delta,\mathfrak{i}}=\mathrm{span}\{\chi_{\mathfrak{i}}^{(k)}\}_{k\in \bar{\mathcal{V}}_{\delta}^{\mathfrak{i}}},\quad U_{\mathfrak{i}}=\mathrm{span}\{\chi_{\mathfrak{i}}^{(k)}\}_{k\in \mathcal{V}^{\mathfrak{i}}}.
\]

For any $l \in \mathbb{R}^{4M}$, we expand it in these basis as  
\begin{equation}\label{eq:sec4:6}    
l=\sum_{k\in\mathcal{V}^{\mathfrak{i}}}\langle l,\chi_{\mathfrak{i}}^{(k)}\rangle_{\mathfrak{i}}\chi_{\mathfrak{i}}^{(k)},
\end{equation}
where $\langle l,\chi_{\mathfrak{i}}^{(k)}\rangle_{\mathfrak{i}}$ denotes the coefficient in the linear representation. Subsequently, the \textit{projection operator} $\mathcal{P}_{\delta,\mathfrak{i}}$ from $\mathbb{R}^{4M}$ (or equivalently, $U_{\mathfrak{i}}$) to its subspace $U_{\delta,\mathfrak{i}}$ can be defined as:
\begin{equation}\label{eq:sec4:14}
    \forall l\in\mathbb{R}^{4M},\quad \mathcal{P}_{\delta,\mathfrak{i}}l=\sum_{k\in \mathcal{V}_{\delta}^{\mathfrak{i}}}\langle l,\chi_{\mathfrak{i}}^{(k)}\rangle_{\mathfrak{i}}\chi_{\mathfrak{i}}^{(k)}.
\end{equation}
Its component is given by $\mathcal{P}_{\delta,\mathfrak{i}}^{k}l=\langle l,\chi_{\mathfrak{i}}^{(k)}\rangle_{\mathfrak{i}}$.

When $\mathfrak{i} \in \mathcal{I}_{\mathrm{b}}$, the \textbf{projection operator} $\mathcal{P}_{\delta,\mathfrak{i}}$ from $\mathbb{R}^{2M}$ onto the subspace $U_{\delta,\mathfrak{i}}$ and its component $\operatorname{Proj}_{\delta,\mathfrak{i}}^{k}$ can be defined in a similar manner.

\section{Proof of Lemma \ref{lemma:FG}}
\label{sec:proof_nested_solution_space}
For any $1\leq l\leq L$ and any function $f\in\mathcal{F}_{\delta}^{(l-1)}$, we define $f_{1}\in\mathcal{F}_{\delta}^{(l-1)}$ as the unique function satisfying the following conditions:
\begin{equation}\label{eq:f1_conditions}
    \begin{aligned}
        &\mathcal{P}_{\delta,\mathfrak{i}}^{k}[f_{1}(\mathbf{x}_{\mathfrak{i},\mathrm{mid}})]=\mathcal{P}_{\delta,\mathfrak{i}}^{k}[f(\mathbf{x}_{\mathfrak{i},\mathrm{mid}})],\quad\forall \mathfrak{i}\in\mathcal{I}_{\mathrm{in}}^{(l-1)}\setminus\mathcal{I}_{\mathrm{in}}^{(l)},\ \forall k\in\mathcal{V}_{\delta,\mathfrak{i}};\\
        &\mathcal{P}_{\delta,\mathfrak{i}}^{k}(f_{1}(\mathbf{x}_{\mathfrak{i},\mathrm{mid}})|_{C})=0,\quad\forall C\in\mathcal{C}^{(l)},\ \forall\mathfrak{i}\in\mathcal{I}^{(l)}\cap C,\ \forall k\in\mathcal{V}_{\delta,C,\mathfrak{i}}.\\
    \end{aligned}
\end{equation}
By the definition of $\mathcal{F}_{\delta}^{(l)}$ (cf. \eqref{eq:sec4:10}), it is clear that $f_{1}\in\mathcal{F}_{\delta}^{(l)}$.
Subsequently, we define $f_{2}$ as the remainder: $f_{2}\triangleq f-f_{1}$. Then, $f_{2}$ must satisfy:
\begin{equation}\label{eq:f2_conditions}
    \begin{aligned}
        &\mathcal{P}_{\delta,\mathfrak{i}}^{k}[f_{2}(\mathbf{x}_{\mathfrak{i},\mathrm{mid}})]=\mathcal{P}_{\delta,\mathfrak{i}}^{k}[f(\mathbf{x}_{\mathfrak{i},\mathrm{mid}})]-\mathcal{P}_{\delta,\mathfrak{i}}^{k}[f_{1}(\mathbf{x}_{\mathfrak{i},\mathrm{mid}})]=0,
        \quad\forall \mathfrak{i}\in\mathcal{I}_{\mathrm{in}}^{(l-1)}\setminus\mathcal{I}_{\mathrm{in}}^{(l)},\ \forall k\in\mathcal{V}_{\delta,\mathfrak{i}};\\
        &\mathcal{P}_{\delta,\mathfrak{i}}^{k}(f_{2}(\mathbf{x}_{\mathfrak{i},\mathrm{mid}})|_{C})=\mathcal{P}_{\delta,\mathfrak{i}}^{k}(f(\mathbf{x}_{\mathfrak{i},\mathrm{mid}})|_{C}),
        \quad\forall C\in\mathcal{C}^{(l)},\ \forall\mathfrak{i}\in\mathcal{I}^{(l)}\cap C,\ \forall k\in\mathcal{V}_{\delta,C,\mathfrak{i}}.\\
    \end{aligned}
\end{equation}
The conditions in \eqref{eq:f2_conditions} ensure that $f_{2}\in\mathcal{G}_{\delta}^{(l)}$. Therefore, we have successfully decomposed $f$ into $f=f_{1}+f_{2}$ with $f_{1}\in\mathcal{F}_{\delta}^{(l)}$ and $f_{2}\in\mathcal{G}_{\delta}^{(l)}$, which implies the following decomposition of the function spaces:
\begin{equation}\label{eq:sec2:25_refined}
    \mathcal{F}_{\delta}^{(l-1)}=\mathcal{F}_{\delta}^{(l)}+\mathcal{G}_{\delta}^{(l)}.
\end{equation}

To complete the proof of Lemma \ref{lemma:FG} based on the decomposition \eqref{eq:sec2:25_refined}, we also need to establish the dimension additivity:
\begin{equation}\label{eq:sec3:26_refined}
    \vert\mathcal{F}_{\delta}^{(l-1)}\vert=\vert\mathcal{F}_{\delta}^{(l)}\vert+\vert\mathcal{G}_{\delta}^{(l)}\vert.
\end{equation}
First, based on the recursive definition of $\mathcal{F}_{\delta}^{(l)}$ in \eqref{eq:sec4:10}, an equivalent definition is given by:
\[
\mathcal{F}_{\delta}^{(l)}=\{f\in\mathcal{F}_{\delta}\ |\ \mathcal{P}_{\delta,\mathfrak{i}}^{k}[f(\mathbf{x}_{\mathfrak{i},\mathrm{mid}})]=0,\ \forall\mathfrak{i}\in\mathcal{I}_{\mathrm{in}}\setminus\mathcal{I}_{\mathrm{in}}^{(l)},\ \forall k\in\mathcal{V}_{\delta,\mathfrak{i}}\}.
\]
Therefore, the dimension of $\mathcal{F}_{\delta}^{(l)}$ is calculated as:
\begin{equation}\label{eq:sec3:4}
    \vert\mathcal{F}_{\delta}^{(l)}\vert = \vert\mathcal{F}_{\delta}\vert - \sum_{\mathfrak{i}\in\mathcal{I}_{\mathrm{in}}\setminus\mathcal{I}_{\mathrm{in}}^{(l)}}\vert\mathcal{V}_{\delta,\mathfrak{i}}\vert = \sum_{\mathfrak{i}\in\mathcal{I}}\vert\mathcal{V}_{\delta,\mathfrak{i}}\vert - \sum_{\mathfrak{i}\in\mathcal{I}\setminus\mathcal{I}^{(l)}}\vert\mathcal{V}_{\delta,\mathfrak{i}}\vert = \sum_{\mathfrak{i}\in\mathcal{I}^{(l)}}\vert\mathcal{V}_{\delta,\mathfrak{i}}\vert.
\end{equation}
Next, according to the definition of $\mathcal{G}_{\delta}^{(l)}$ in \eqref{eq:sec4:11}, we compute its dimension:
\begin{equation}\label{eq:sec3:5}
\begin{aligned}
    \vert\mathcal{G}_{\delta}^{(l)}\vert &= \vert\mathcal{F}_{\delta}^{(l-1)}\vert - \sum_{C\in\mathcal{C}^{(l)}}\sum_{\mathfrak{i}\in\mathcal{I}^{(l)}\cap C}\vert\mathcal{V}_{\delta,C,\mathfrak{i}}\vert 
= \vert\mathcal{F}_{\delta}^{(l-1)}\vert - \sum_{\mathfrak{i}\in\mathcal{I}^{(l)}}\sum_{\mathfrak{i}\in C\in\mathcal{C}^{(l)}}\vert\mathcal{V}_{\delta,C,\mathfrak{i}}\vert \\
&= \vert\mathcal{F}_{\delta}^{(l-1)}\vert - \sum_{\mathfrak{i}\in\mathcal{I}^{(l)}}\vert\mathcal{V}_{\delta,\mathfrak{i}}\vert = \vert\mathcal{F}_{\delta}^{(l-1)}\vert - \vert\mathcal{F}_{\delta}^{(l)}\vert.\\
\end{aligned}
\end{equation}

The last step uses the dimension of $\mathcal{F}_{\delta}^{(l)}$ calculated above. Rearranging the terms, we immediately obtain equation \eqref{eq:sec3:26_refined}. This completes the proof of Lemma \ref{lemma:FG}.

\section{Proof of Lemma \ref{lemma:localization}}
\label{sec:proof_localization}
First, consider $l = 0$. By the definition of $\mathcal{F}_{\delta}$ in \eqref{eq:sec2:30}, the basis function $\phi_{C_{0},\mathfrak{i}_{0}}^{(0),k_{0}}$ admits the expansion
\begin{equation}\label{eq:RSF_basis_expansion}
    \phi_{C_{0},\mathfrak{i}_{0}}^{(0),k_{0}} = \sum_{C\in\mathcal{C}}\sum_{k\in\mathcal{V}_{\delta, C}}\alpha_{C}^{(k)}\phi_{C}^{k}
\end{equation}
where the coefficients $\alpha_{C}^{(k)}$ are to be determined. Since each $\phi_{C}^{k}$ is supported entirely within its associated cell $C$ for all $C \in \mathcal{C}$ and $k \in \mathcal{V}_{\delta, C}$, it follows immediately that $\alpha_{C}^{(k)} = 0$ whenever $C \neq C_{0}$. Consequently, the expansion simplifies to
\begin{equation}
    \phi_{C_{0},\mathfrak{i}_{0}}^{(0),k_{0}} = \sum_{k\in\mathcal{V}_{\delta, C_{0}}}\alpha_{C}^{(k)}\phi_{C_{0}}^{k}, 
\end{equation}
which confirms that $\phi_{C_{0},\mathfrak{i}_{0}}^{(0),k_{0}}$ is localized within the cell $C_{0}$.

Now assume $1 \leq l \leq L$. The definition of $\mathcal{F}_{\delta}^{(l)}$ can be reformulated as follows: for any $C_{0}\in\mathcal{C}^{(l)}$, $\mathfrak{i}_{0}\in\mathcal{I}^{(l)}\cap C_{0}$, and $k_{0}\in\mathcal{V}_{\delta,C_{0},\mathfrak{i}_{0}}$, the function $\phi_{C_{0},\mathfrak{i}_{0}}^{(l),k_{0}}$ is the unique element of $\mathcal{F}_{\delta}^{(l-1)}$ satisfying
\begin{equation}\label{eq:sec3:1}
    \begin{aligned}
        &\phi_{C_{0},\mathfrak{i}_{0}}^{(l),k_{0}} \in\mathcal{F}_{\delta}^{(l-1)}\\
        s.t.\quad & \mathcal{P}_{\delta,\mathfrak{i}_{0}}^{k_{0}}(\phi_{C_{0},\mathfrak{i}_{0}}^{(l),k_{0}}(\bar{\mathbf{x}}_{\mathfrak{i}_{0}})|_{C_{0}})=1;\\
        &\mathcal{P}_{\delta,\mathfrak{i}}^{k}(\phi_{C_{0},\mathfrak{i}_{0}}^{(l),k_{0}}(\bar{\mathbf{x}}_{\mathfrak{i}})|_{C})=0,\quad \forall\ C\in\mathcal{C}^{(l)},\ \mathfrak{i}\in \mathcal{I}^{(l)}\cap C, \  k\in\mathcal{V}_{\delta,C,\mathfrak{i}},\ (C,\mathfrak{i},k)\ne (C_{0},\mathfrak{i}_{0},k_{0});\\
        & \mathcal{P}_{\delta,\mathfrak{i}}^{k}[\phi_{C_{0},\mathfrak{i}_{0}}^{(l),k_{0}}(\bar{\mathbf{x}}_{\mathfrak{i}})]=0,\quad \forall \mathfrak{i}\in \mathcal{I}^{(l-1)}\setminus\mathcal{I}^{(l)},\quad \forall k\in\mathcal{V}_{\delta,\mathfrak{i}}.
    \end{aligned}
\end{equation}
Since $\phi_{C_{0},\mathfrak{i}_{0}}^{(l),k_{0}} \in \mathcal{F}_{\delta}^{(l-1)}$, it can be expressed as
\begin{equation}
    \phi_{C_{0},\mathfrak{i}_{0}}^{(l),k_{0}} = \sum_{C\in\mathcal{C}^{(l-1)}}\sum_{\mathfrak{i}\in\mathcal{I}^{(l-1)}\cap C}\sum_{k\in\mathcal{V}_{\delta,C,\mathfrak{i}}}\alpha_{C,\mathfrak{i}}^{(l-1),k}\phi_{C,\mathfrak{i}}^{(l-1),k}
\end{equation}
By the induction hypothesis, each $\phi_{C,\mathfrak{i}}^{(l-1),k}$ is localized within its cell $C$. Therefore, the constraints in \eqref{eq:sec3:1} imply that $\alpha_{C,\mathfrak{i}}^{(l-1),k} = 0$ unless $C \subset C_{0}$. Hence, the expansion reduces to
\begin{equation}\label{eq:sec3:14}
    \phi_{C_{0},\mathfrak{i}_{0}}^{(l),k_{0}} = \sum_{C\in C_{0}\cap\mathcal{C}^{(l-1)}}\sum_{\mathfrak{i}\in\mathcal{I}^{(l-1)}\cap C}\sum_{k\in\mathcal{V}_{\delta,C,\mathfrak{i}}}\alpha_{C,\mathfrak{i}}^{(l-1),k}\phi_{C,\mathfrak{i}}^{(l-1),k}, 
\end{equation}
which shows that $\phi_{C_{0},\mathfrak{i}_{0}}^{(l),k_{0}}$ is supported only within $C_{0}$.

By mathematical induction, it follows that $\phi_{C_{0},\mathfrak{i}_{0}}^{(l),k_{0}}$ is localized within the cell $C_{0}$ for all $0 \leq l \leq L$. An analogous argument applies to the basis functions of $\mathcal{G}_{\delta}^{(l)}$, establishing their localization as well.

\section{Proof of Lemma \ref{lemma:twoleveldecomposition}}
\label{append:twoleveldecomposition}

Let $\mathbf{v}\in\mathbb{R}^{\vert\mathcal{F}_{\delta}^{(l)}\vert}$ be an arbitrary vector. There exists a function $f\in\mathcal{F}_{\delta}^{(l)}$ such that $\mathbf{v}=\big(\mathbf{f}_{\mathcal{C}^{(l)},\mathrm{b}}^{T},[\mathbf{f}]_{\mathcal{I}_{\mathrm{in}}^{(l)}}\big)^{T}$. Consequently, we have
\begin{equation}\label{eq:sec3:9}
  \mathbf{f}_{\mathcal{C}^{(l)}} = B_{\delta}^{(l),-1}\mathbf{v}.
\end{equation}
Furthermore, by Lemma \ref{lemma:FG}, $f$ admits a decomposition $f = f_{1} + f_{2}$, where $f_{1}\in\mathcal{F}_{\delta}^{(l+1)}$ and $f_{2}\in\mathcal{G}_{\delta}^{(l+1)}$. This implies
\begin{equation}\label{eq:sec3:10}
  \mathbf{f}_{\mathcal{C}^{(l)}} = \mathbf{f}_{1,\mathcal{C}^{(l)}} + \mathbf{f}_{2,\mathcal{C}^{(l)}}.
\end{equation}
We now proceed to determine the expressions for $\mathbf{f}_{1,\mathcal{C}^{(l)}}$ and $\mathbf{f}_{2,\mathcal{C}^{(l)}}$.From equation \eqref{eq:f2_conditions}, we have
\[
[\mathbf{f}]_{\mathcal{I}^{(l)}\setminus\mathcal{I}^{(l+1)}} = [\mathbf{f}_{2}]_{\mathcal{I}^{(l)}\setminus\mathcal{I}^{(l+1)}}.
\]
It follows that
\begin{equation}\label{eq:sec3:11}
\mathbf{f}_{2,\mathcal{C}^{(l)}} = Q_{\delta}^{(l)}[\mathbf{f}_{2}]_{\mathcal{I}^{(l)}\setminus\mathcal{I}^{(l+1)}} = Q_{\delta}^{(l)}\check{R}_{\delta}^{(l)}\big(\mathbf{f}_{\mathcal{C}^{(l)},\mathrm{b}}^{T},[\mathbf{f}]_{\mathcal{I}_{\mathrm{in}}^{(l)}}\big)^{T} = Q_{\delta}^{(l)}\check{R}_{\delta}^{(l)}\mathbf{v}.
\end{equation}
Next, we observe that
\begin{equation*}
\begin{aligned}
  \big(\mathbf{f}_{1,\mathcal{C}^{(l+1)},\mathrm{b}}^{T},[\mathbf{f}_{1}]_{\mathcal{I}_{\mathrm{in}}^{(l+1)}}\big)^{T} & = \big(\mathbf{f}_{\mathcal{C}^{(l+1)},\mathrm{b}}^{T},[\mathbf{f}]_{\mathcal{I}_{\mathrm{in}}^{(l+1)}}\big)^{T} - \big(\mathbf{f}_{2,\mathcal{C}^{(l+1)},\mathrm{b}}^{T},[\mathbf{f}_{2}]_{\mathcal{I}_{\mathrm{in}}^{(l+1)}}\big)^{T} = R_{\delta}^{(l)}\big(\mathbf{f}_{\mathcal{C}^{(l)},\mathrm{b}}^{T},[\mathbf{f}]_{\mathcal{I}_{\mathrm{in}}^{(l)}}\big)^{T} - B_{\delta}^{(l+1)}\mathbf{f}_{2,\mathcal{C}^{(l)}} \\
  &= R_{\delta}^{(l)}\mathbf{v} - B_{\delta}^{(l+1)}Q_{\delta}^{(l)}\check{R}_{\delta}^{(l)}\mathbf{v}.
\end{aligned}
\end{equation*}
Thus, we obtain
\begin{equation}\label{eq:sec3:12}
  \mathbf{f}_{1,\mathcal{C}^{(l)}} = P_{\delta}^{(l)}\mathbf{f}_{1,\mathcal{C}^{(l+1)}} = P_{\delta}^{(l)}\mathcal{B}_{\delta}^{(l+1),-1}\big(\mathbf{f}_{1,\mathcal{C}^{(l+1)},\mathrm{b}}^{T},[\mathbf{f}_{1}]_{\mathcal{I}_{\mathrm{in}}^{(l+1)}}\big)^{T} = P_{\delta}^{(l)}\mathcal{B}_{\delta}^{(l+1),-1}(R_{\delta}^{(l)} - B_{\delta}^{(l+1)}Q_{\delta}^{(l)}\check{R}_{\delta}^{(l)})\mathbf{v}.
\end{equation}
Substituting equations \eqref{eq:sec3:9}, \eqref{eq:sec3:11}, and \eqref{eq:sec3:12} into \eqref{eq:sec3:10} yields
\[
B_{\delta}^{(l),-1}\mathbf{v} = P_{\delta}^{(l)}\mathcal{B}_{\delta}^{(l+1),-1}(R_{\delta}^{(l)} - B_{\delta}^{(l+1)}Q_{\delta}^{(l)}\check{R}_{\delta}^{(l)})\mathbf{v} + Q_{\delta}^{(l)}\check{R}_{\delta}^{(l)}\mathbf{v}.
\]
Since $\mathbf{v}$ is arbitrary, the proof is complete.

\section*{Acknowledgments}
M. Tang  is supported by the Strategic Priority Research Program of Chinese Academy of Sciences Grant No.XDA25010401; NSFC12031013, Shanghai pilot innovation project 21JC1403500 and Mevion Medical Systems, Inc., Kunshan. L. Zhang is partially supported by the NSFC grant 12271360, and the Fundamental Research Funds for the Central Universities.



\end{document}